\newtheorem{thm}{Theorem}[section]
\newtheorem{prop}[thm]{Proposition}
\newtheorem{cor}[thm]{Corollary}
\newtheorem{lem}[thm]{Lemma}
\newtheorem{defn}[thm]{Definition}
\newtheorem{example}[thm]{Example}
\newtheorem{remark}[thm]{Remark}
\newcommand{\R}{\mathbb{R}}
\newcommand{\RR}{\mathbb{R}}
\newcommand{\C}{\mathbb{C}}
\newcommand{\NN}{\mathbb{N}}
\newcommand{\N}{\mathbb{N}}
\newcommand{\PP}{\mathbb{P}}
\newcommand{\K}{\mathbb{K}}
\newcommand{\Le}{\mathbb{Le}}
\newcommand{\Lc}{\mathcal{L}} 
\newcommand{\Nc}{\mathcal{N}} 
\newcommand{\Ic}{\mathcal{I}} 
\newcommand{\Pc}{\mathcal{P}} 
\newcommand{\Sc}{\mathcal{S}} 
\newcommand{\Qc}{\mathcal{Q}} 
\newcommand{\Vc}{\mathcal{V}}
\newcommand{\Mon}{\mathcal{M}}
\newcommand{\Span}[1]{\<{#1}\>}
\newcommand{\SpanDeg}[2]{#1_{\<{#2}\>}}
\newcommand{\clK}{\overline{\mathbb K}}
\newcommand{\xx}{\mathbf x}
\newcommand{\uu}{\mathbf u}
\newcommand{\vv}{\mathbf v}
\newcommand{\gb}{\mathbf g} 
\newcommand{\yy}{\mathbf y} 
\newcommand{\zz}{\mathbf z}
\newcommand{\s}{\mathbf s}
\newcommand{\rank}{\mathrm{rank}}
\def\<{\langle}
\def\>{\rangle}
\DeclareMathOperator\re{Re}
\DeclareMathOperator\im{Im}
\begin{document}

\title{Exact relaxation for polynomial optimization on semi-algebraic sets}

\author{Marta Abril Bucero, Bernard Mourrain}
\address{Marta Abril Bucero, Bernard Mourrain: Galaad, Inria
  M\'editerran\'ee, 06902 Sophia Antipolis}

\begin{abstract}
In this paper, we study the problem of computing the infimum  
of a real polynomial function $f$ on a closed basic semialgebraic set
$S$ and the points where this infimum is reached, if they exist.
We show that when the infimum is reached, a Semi-Definite Program 
hierarchy constructed from the Karush-Kuhn-Tucker ideal is always exact and 
that the vanishing ideal of the KKT minimizer points is generated by
the kernel of the associated moment matrix in that degree, even if
this ideal is not zero-dimensional. We also show that this relaxation
allows to detect when there is no KKT minimizer. Analysing the
properties of the Fritz John variety, we show how to find all the minimizers of $f$.
We prove that the exactness of the relaxation depends only on the real points which satisfy these constraints.
This exploits representations of positive polynomials as elements
of the preordering modulo the KKT ideal, which only involves
polynomials in the initial set of variables.
The approach provides a uniform treatment of different optimization
problems considered previously.
Applications to global optimization, optimization on semialgebraic 
sets defined by regular sets of constraints, optimization on finite 
semialgebraic sets and real radical computation are given. 
\end{abstract}

\maketitle
\setcounter{page}{1}

\section{Introduction}


The problem we consider in this paper is the following: 
\begin{eqnarray}\label{problem1}
 \inf_{\xx \in \R^n}&& f(\xx) \\ 
 s.t.&& g_1^{0}(\xx)=\cdots=g_{n_1}^{0}(\xx)=0 \nonumber \\
     &&g_1^{+}(\xx)\ge 0,\ldots, g_{n_2}^{+}(\xx)\ge 0  \nonumber
\end{eqnarray}
where $f, g_1^{0}, \ldots, g_{n_{1}}^{0}$, $g_1^{+}, \ldots, g_{n_{2}}^{+}
\in \R[\xx]$ are polynomial functions in $n$ variables $x_{1},
\ldots, x_{n}$.

Hereafter, we fix the set of constraints
$\gb=\{g_1^{0},\ldots,g_{n_1}^{0}; g_1^{+}, \ldots, g_{n_{2}}^{+} \}=\{\gb^0;\gb^+\}$
and denoted by $S$ the basic semi-algebraic
set defined by these constraints.


The points $\xx^{*}\in S$ which satisfy $f 
(\xx^{*}) = \inf_{\xx \in S} f (\xx)$ are called the
{\em minimizers points} of $f$ on $S$.
If the set of minimizers is not empty, we say that the {\em  minimization problem is feasible}.

The objectives of the method we consider are to detect if the minimization problem is feasible
and to compute the minimum value of $f$ and the minimizer points where this minimum value is
reached, when the problem is feasible.
Though this global minimization problem is known to be NP-hard (see
e.g. \cite{Nesterov2000}),
a practical challenge is to devise methods which can approximate or
compute efficiently the solutions of the problem.

About a decade ago, a relaxation approach has been proposed in
\cite{Las01} (see also \cite{Par03}, \cite{Shor87}) to solve this
difficult problem. 
Instead of searching points where the polynomial $f$ reaches its
minimum $f^{*}$, a probability measure
which minimizes the function $f$ is searched. This problem is relaxed into a
hierarchy of finite dimensional convex minimization problems, which can
be solved by Semi-Definite Programming (SDP) techniques. The sequence
of SDP minima converges to the minimum $f^{*}$ \cite{Las01}. This hierarchy of
SDP problems can be formulated in terms of linear matrix inequalities
on moment matrices associated to the set of monomials of degree $t$ or
less, for increasing values of $t$.  The dual hierarchy can be
described as a sequence of maximization problems over the cone of
polynomials that are Sums of Squares (SoS). A feasibility condition is
needed to prove that this dual hierarchy of maximization problems also
converges to the minimum $f^{*}$, i.e. that there is no duality gap.

This approach provides a very interesting way to approximate a global
optimum of a polynomial function on $S$. But one may wonder if using
this approach, it
is possible to compute in a finite number of steps, this minimum and the minimizer
points when the problem is feasible.
From a computational point of view, the following issues need to be addressed:
\begin{enumerate}
 \item Is it possible to use an {\em exact} SDP hierarchy, i.e. which 
   converges in a finite number of steps?
 \item How can we recover all the points where the optimum is achieved
   if the optimization problem is feasible?
\end{enumerate}
To address the first issue, the following strategy has been considered:
add polynomial inequalities or equalities satisfied by the points where
the function $f$ is minimum. 

A first family of methods are used when the set $S$ is compact or when
the minimizer set can be bounded easily. By adding an inequality
constraint, one can then transform $S$ into a compact subset of
$\R^{n}$, for which exact hierarchies can be used \cite{Las01},
\cite{Marshall03}. It is shown in \cite{Lau07} that
if the complex variety defined by the equalities $\gb^{0}=0$ is
finite (and thus $S$ is compact), then the hierarchy of relaxation problems introduced by
Lasserre in \cite{Las01} is exact.  It is also proved that there is no
duality gap if the generators of this ideal satisfy some regularity conditions.
In \cite{NieReal}, it is proved that if the real variety defined by
the equalities $\gb^{0}=0$ is finite, then the hierarchy of relaxation
problems introduced by Lasserre is exact, this answers an open
question in \cite{LauSur}.

In a second family of methods, equality constraints which are
naturally satisfied by the minimizer points are added. These constraints are
for instance the gradient of $f$ when $S=\R^{n}$ or the
Karush-Kuhn-Tucker (KKT) constraints, obtained by introducing Lagrange
multipliers.  In \cite{NDS}, it is proved that a relaxation hierarchy
using the gradient constraints is exact when the gradient ideal is
radical.  In \cite{Marshall}, it is shown that this gradient hierarchy
is exact, when the global minimizers satisfy the Boundary Hessian
condition. In \cite{DNP}, it is proved that a relaxation hierarchy
which involves the KKT constraints is exact when the
KKT ideal is radical. In \cite{Ha-Pham:10}, a relaxation hierarchy
obtained by projection of the KKT constraints is proved to be exact
under a regularity condition on the {\em real} minimizer
points\footnote{The results of this paper are true but a problem
  appears in the proof which we fix in the present paper.}. In
\cite{Nie11}, a similar relaxation hierarchy is shown to be exact
under a stronger regularity condition for the {\em complex} points of
associated KKT varieties.
These regularity conditions require that the gradient of the active
constraints evaluated at the points of $S$ or of some complex varieties are linearly
independent. 
Thus they cannot be used for general semi algebraic sets $S$, for
instance when $S$ is a real non-complete intersection variety.

Moreover, the assumption that the minimum is reached at a KKT point is
required. Unfortunately, in some cases the set of KKT points of $S$
can be empty. As we shall see, this obstacle can be removed
using Fritz John variety (see \cite{FJohn48,MangFrom67}). 
There is not much work dedicated to this issue (see \cite{Lasserre:book}).

The case where the infimum value is not reached has also been studied. 
In \cite{Schweighofer06}, relaxation techniques are studied for
functions for which the minimum is not reached and which satisfy
some special properties ``at infinity''. 
In \cite{Ha-Pham:08}, tangency constraints are used in a relaxation hierarchy
which converges to the global minimum of a polynomial, when the
polynomial is bounded by below over $\R^{n}$.
In \cite{Guo:2010:GOP:1837934.1837960}, generic changes of
coordinates and a partial gradient ideal are used in a relaxation
hierarchy which also converges to the global minimum of $f$ on $\R^{n}$. 

In the cases studied so far, the exactness of the relaxation is proved
under a genericity condition or a compactness property.  From an
algorithmic point of view, the flat extension condition of
Curto-Fialkow \cite{CF96} is used in most of the works \cite{HeLa05,
  Lau07, LLR08b, LauSur} to detect the exactness of the hierarchy, when the
number of minimizers is finite.  In \cite{Nie2012}, it is proved that
the Curto-Fialkow flat extension criterion is eventually satisfied on
truncated moment matrices under some regularity conditions or
archimedean conditions.  In \cite{lasserre:hal-00651759}, a sparse
extension \cite{MoLa2008} of this flat extension condition is used to
compute zero-dimensional real radical ideals.

The second issue is related to the problem of computing all the
minimizer points, which is also important from a practical point of
view.  In \cite{LLR08b}, the kernel of moment matrices is used to
compute generators of the real radical of an ideal. This method is
improved in \cite{lasserre:hal-00651759} to compute a border basis of
the real radical, involving SDP problems of significantly smaller
size, when the real radical ideal is zero-dimensional. The case of
positive dimensional real radical ideal is analysed in \cite{Ros2009}
and \cite{Zhi}. The problem of computing the minimizer ideal for
general optimization problems from exact relaxation hierarchies has
not been addressed, though it is mentioned in \cite{Nie2012} for
zero-dimensional minimizer ideals

%
 
Notice that Problem \eqref{problem1} can be attacked from a 
purely algebraic point of view. It reduces to the
computation of a (minimal) critical value and polynomial
system solvers can be used to tackle it (see
e.g. \cite{Parrilo03minimizingpolynomial}, 
\cite{Greuet:2011:DRI:1993886.1993910}).
But in this case, the complex solutions of the underlying algebraic
system come into play and additional computation efforts should be
spent to remove these extraneous solutions.
Semi-algebraic techniques such as Cylindrical Algebraic
Decomposition or extensions \cite{ElDin:2008:CGO:1390768.1390781}
may also be considered here, providing algorithms to solve Problem
\eqref{problem1}, but suffering from similar issues.

\noindent{}\textbf{Contributions.}
Our aim is to show that for the general polynomial optimization problem
\eqref{problem1}, exact SDP relaxations can be constructed, which either 
detect that the problem is infeasible or compute the minimal value and
the ideal associated to the minimizer points.
The main contributions are the following:
\begin{itemize}
 \item We prove that exact relaxation hierarchies depending on the variables
 $\xx$ can be constructed for solving the optimization problem \eqref{problem1} (see
 Theorem \ref{thm:exact} and Theorem
 \ref{cthprin}). 
 \item We show that even if the minimizer points are not KKT points, we can find them 
 using the Fritz John variety (see Section 3.3 and Section 3.4). We
 describe an approach, which splits this minimizer set into
 the KKT minimizer set and the singular minimizer set which can be
 recursively computed using the same method.
\item We prove that if the set of KKT minimizers is empty, the
  SDP relaxation will eventually be empty (Theorem \ref{thm:exact}). 
\item We prove that the KKT minimizer ideal can be constructed from the
   moment matrix of an optimal linear form, when the corresponding
   relaxation is exact, even if the ideal is not zero-dimensional (Theorem \ref{cthprin}).
 \item We prove that the exactness of the relaxation 
 depends only on the real points which satisfy these constraints (Theorem \ref{cthprin}).
 \item We provide a general approach which allows us to treat in a uniform way
   and to extend results on the representation of
   polynomials which are positive (resp. non-negative) on the
   critical points (see \cite{DNP}  and Theorem 4.9) and on the exactness of
   relaxation hierarchies 
(see \cite{NDS}, \cite{Ha-Pham:08}, \cite{LLR08b}, \cite{Nie11},
\cite{lasserre:hal-00651759}, \cite{NieReal}  and Theorem 6.2, Theorem 6.4, Theorem 6.5, Theorem 6.6).
\end{itemize}
%
%

\noindent{}\textbf{Content.}
The paper is organized as follows. In Section 2, we recall
algebraic concepts and describe the hierarchy
of finite dimensional convex optimization problems considered. 
In Section 3, we analyse the varieties associated to the critical
points of the minimization problem. 
Section 4, is devoted to the representation of positive and non-negative
polynomials on the critical points as sum of squares modulo the gradient ideal.  
In Section 5, we prove that when the order of relaxation is big
enough, the sequence of finite dimensional convex optimization problems
attains its limit and the minimizer ideal can be generated from the
solution of our relaxation problem. 
In Section 6, we analyse some consequences of these results. 
Finally, Section 7 contains several examples which illustrate the approach.

\section{Ideals, varieties, optimization and relaxation}\label{sec:2}
In this section, we recall some algebraic concepts as ideals and
varieties and we set our notation.

\subsection{Ideals and varieties}
Let $\K[\xx]$ be the set of the polynomials in the variables
$\xx=(x_1,\ldots$, $x_n)$, with coefficients in the field
$\K$. Hereafter, we choose\footnote{For notational simplicity, we consider only these two fields, but $\R$ and $\C$ can
be replaced respectively by any real closed field and any field containing its algebraic closure.}
 $\K=\R$ or $\C$.
 Let $\overline{\K}$ denotes the algebraic closure of ${\K}$.
For $\alpha \in \N^n$, $\xx^{\alpha}= x_1^{\alpha_1} \cdots x_n^{\alpha_n}$ is the monomial with exponent $\alpha$
and degree $|\alpha|=\sum_i\alpha_i$.  The set of all monomials in $\xx$ is
denoted $\Mon = \Mon(\xx)$. 

For $t\in \N\cup \{\infty\}$ and $B\subseteq \K[\xx]$, we introduce the following sets:
\begin{itemize}
 \item $B_{t}$ is the set of elements of $B$ of degree $\le t$,
\item $\Span{B} = \big\{ \sum_{f\in B} \lambda_{f}\, f\ |\ f\in B, \lambda_f\in \K\big\}$ is the linear span of $B$,
\item $(B) = \big\{ \sum_{f\in B} p_f\, f \ | \ p_f \in \K[\xx], f \in B \big\}$ is the ideal in $\K[\xx]$ generated by $B$,
\item $\SpanDeg{B}{t} = \big\{ \sum_{f\in B_t} p_f\, f\ | \ p_f \in \K[\xx]_{t-\deg(f)}\big\}$ is the vector space spanned by $\{\xx^\alpha  f\mid
  f\in B_t, |\alpha|\le t-\deg(f)\}$, 
 \item $\Qc^{+}_{t}= \big\{ \sum_{i=1}^{l} p_{i}^{2}\mid l\in \N, p_{i}\in \R[\xx]_{t} \big\}$ is the set of finite sums of
   squares of polynomials of degree $\le t$; $\Qc^{+}= \Qc^{+}_{\infty}$.
\end{itemize}
By definition $\SpanDeg{B}{t} \subseteq (B)\cap \K[\xx]_t= (B)_t$, but the inclusion may be strict.

By convention, a set of constrains $C=\{c_1^{0},
\ldots, c_{n_{1}}^{0}$; $c_1^{+}, \ldots$, $c_{n_{2}}^{+}\} \subset \RR[\xx]$ is a
finite set of polynomials 
composed of a subset $C^{0}=\{c_1^{0},
\ldots, c_{n_{1}}^{0}\}$ corresponding to the equality constraints and 
a subset $C^{+}=\{c_1^{+}, \ldots, c_{n_{2}}^{+}\}$ corresponding to
the non-negativity constraints.
For two set of constraints $C,C'\subset \R[\xx]$, we say that $C \subset C'$ if $C^{0}\subset C'^{0}$ and $C^{+}\subset C'^{+}$.

\begin{defn} 
For $t\in \N\cup \{\infty\}$ and a set of constraints $C=\{c_1^{0},
\ldots, c_{n_{1}}^{0}$; $c_1^{+}, \ldots$, $c_{n_{2}}^{+}\} \subset \RR[\xx]$,  we define the (truncated) quadratic module
  of $C$ by
 \begin{equation*}
  \Qc_{t}(C)=\{\sum_{i=1}^{n_{2}} c_{i}^{0} \, h_{i}+s_{0}+\sum_{j=1}^{n_{2}}c^{+}_{j} \, s_{j} \mid
h_{i} \in \R[\xx]_{2t-\deg (c_{i}^{0})}, s_{0}\in \Qc^{+}_{t}, s_{i}
\in \Qc^{+}_{t- \lceil \deg (c^{+}_{i})/2  \rceil} \}.
 \end{equation*}
If $\tilde{C}$ is such that $\tilde{C}^{0}=C^{0}$ and
$\tilde{C}^{+}=\{\prod (c_{1}^{+})^{\epsilon_{1}}\cdots (c^{+}_{n_{2}})^{\epsilon_{n_{2}}} \mid 
\epsilon_{i} \in \{0,1\}\}$,  $\Qc_{t}(\tilde{C})$ is also called the
(truncated) preordering of $C$ and denoted $\Pc_{t}(C)$.  When $t=\infty$, $\Pc(C):=
\Pc_{\infty} (C)$ is the preordering of $C$.
The (truncated) preordering generated by the positive constraints is
denoted $\Pc^{+} (C)=\Pc (C^{+})$.
\end{defn}

\begin{defn}
 Given $t \in \NN\cup\{\infty\}$ and a set of constraints $C
 \subset \R[\xx]$, we define 
$$
  \Nc_{t}(C):=\{\Lambda \in (\R[\xx]_{2t})^{*} \mid \Lambda(p) \geq
  0,\ \forall p \in \Qc_{t}(C), \Lambda (1)=1 \}.
$$
When we replace $\Qc_{t} (C)$ by $\Pc_{t} (C)$ in this definition,
we denote the corresponding set by $\Lc_{t} (C)$.
\end{defn}



 

\medskip
Given a set $I\subseteq \K[\xx]$ and a field $\Le \supseteq \K$, we denote by
\begin{equation*}
 \Vc^{\Le}(I):=\{x\in \Le^n\mid f(x)=0\ \forall f\in I\}
\end{equation*}
its associated variety in $\Le^{n}$. By convention
$\Vc(I)=\Vc^{\clK}(I)$, where $\clK$ is the algebraic closure of $\K$. 
We  also consider sets of homogeneous equations $I$ and
the varieties $\PP\Vc (I)$ (resp. $\PP\Vc^{\R}(I)$) defined in the projective space $\PP^{n}$
 (resp.  the real projective space $\R\PP^{n}$).

For a set $V\subseteq \K^n$, we define its vanishing ideal
\[\Ic(V):=\{p\in \K[\xx]\mid p(v)=0\ \forall v\in V\}.\]
For a set $V\subset \Le^{n}$ with $\Le \supseteq \K$, $V^{\K}=V\cap \K^{n}$.
Hereafter, we  take $\K=\R$ and $\Le=\C$, so that
$\Vc(I)=\Vc^{\C}(I)$, $\Vc^\R(I)=\Vc (I)^{\R}= \Vc(I)\cap \R^n.$

\begin{defn}
For a set of constrains $C= (C^{0};C^{+})\subset \R[\xx]$,
\begin{eqnarray*}
\Sc (C) &:=& \{\xx \in \R^{n} \mid c^0(\xx)=0 \ \forall c^0 \in C^0,\ c^{+}(\xx)\geq 0\ \forall c^{+}\in C^{+}\},\\
\Sc^{+}(C) &:= &\{\xx \in \R^{n} \mid  c^{+}(\xx)\geq 0\ \forall c^{+}\in C\}.
\end{eqnarray*}
\end{defn}
To describe the vanishing ideal of these sets, we introduce the
following ideals:
\begin{defn} For a set of constraints $C= (C^{0};C^{+}) \subset \R[\xx]$,
\begin{eqnarray*}
\sqrt{C^{0}} &=& \{p\in \R[\xx] \,\mid\, p^m\in (C^{0})\ \text{ for
  some } m\in \N\setminus \{0\}\}\\
\sqrt[\R]{C^{0}}&=&\{p\in \R[\xx]\mid p^{2m} + q \in (C^{0}) \
\text{ for some } m\in \N\setminus \{0\}, q \in \Qc^{+} \}\\
\sqrt[C^{+}]{C^{0}}&=&\{p\in \R[\xx]\mid p^{2m} + q \in
(C^{0}) \ \text{ for some } m\in \N\setminus \{0\},  q \in \Pc^{+}(C) \}
\end{eqnarray*}
These ideals are called respectively the radical of $C^{0}$, the real
radical of $C^{0}$, the $C^{+}$-radical of $C^{0}$.
\end{defn}

\begin{remark}
 If $C^{+}=\emptyset$, then $\sqrt[C^{+}]{C^{0}}=\sqrt[\R]{C^{0}}$.
\end{remark}

The following three famous theorems relate vanishing and radical
ideals:
\begin{thm}\label{null}Let $C= (C^{0};C^{+})$ be a set of
  constraints of $\R[\xx]$. 
\begin{itemize}
\item[(i)] {\bf Hilbert's Nullstellensatz} (see, e.g., \cite[\S 4.1]{CLO97})
  $\sqrt{C^{0}}=\Ic(\Vc^{\C}(C^{0}))$.
\item[(ii)] {\bf Real Nullstellensatz} (see, e.g., \cite[\S 4.1]{BCR98})
$ \sqrt[\R]{C^{0}}=\Ic(\Vc^\R({C^{0}}))$.
\item[(iii)]{\bf Positivstellensatz} (see, e.g., \cite[\S 4.4]{BCR98})
$\sqrt[C^{+}]{C^{0}}=\Ic (\Sc (C)) = \Ic(\Vc^\R({C^{0}}) \cap \Sc^{+}(C) )$.
\end{itemize}
\end{thm}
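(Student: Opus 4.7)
The plan is to handle each of the three parts with the same two-step template: the easy inclusion $\supseteq$ comes from direct evaluation at a point of the relevant variety, while the hard inclusion $\subseteq$ is obtained by a Rabinowitsch-style localization that turns a vanishing hypothesis into an algebraic infeasibility hypothesis in one extra variable. I first dispose of the easy inclusions uniformly.

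For $\supseteq$ in (i), if $p^{m}\in (C^{0})$ then $p^{m}(x)=0$ and hence $p(x)=0$ on $\Vc^{\C}(C^{0})$. For $\supseteq$ in (ii), on $\Vc^{\R}(C^{0})$ a relation $p(x)^{2m}+q(x)=0$ with $q\in\Qc^{+}$ forces both nonnegative summands to vanish, so $p(x)=0$. For $\supseteq$ in (iii), on $\Sc(C)$ every $q\in\Pc^{+}(C)$ is nonnegative, so $p(x)^{2m}=-q(x)\le 0$ forces $p(x)=0$. For the converses, I would introduce a fresh variable $y$ and form the Rabinowitsch ideal $J=(C^{0}\cup\{1-yp\})$, together with the positivity constraints $C^{+}$ in case (iii), and observe that the hypothesis that $p$ vanishes on the relevant set makes $J$ infeasible in the corresponding sense.

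In case (i), $J\subset\C[\xx,y]$ has empty complex variety, so the weak Nullstellensatz (a consequence of Zariski's lemma on $\C[\xx,y]/\mathfrak{m}$) gives $1\in J$; writing out this identity, substituting $y=1/p$, and clearing denominators yields $p^{m}\in (C^{0})$. In cases (ii) and (iii) one needs the analogous real statement: an ideal extended by the $C^{+}$ constraints that has no real zeros in $\Sc(C)$ must admit a Positivstellensatz certificate expressing $-1$ modulo $(C^{0})$ as an element of $\Pc^{+}$. Rewriting such a certificate in $y$, substituting $y=1/p$, and clearing denominators produces $p^{2m}+q\in (C^{0})$ with $q\in\Qc^{+}$ in case (ii) or $q\in\Pc^{+}(C)$ in case (iii).

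The principal obstacle is the underlying Krivine-Dubois-Risler and Stengle result: showing that real infeasibility forces an algebraic sum-of-squares certificate. The standard route is a Zorn's-lemma argument on the partially ordered set of proper preorderings extending $\Pc^{+}(C)$; a maximal such preordering induces a total order on some residue domain $\R[\xx]/\mathfrak{p}$, which therefore embeds into a real closed overfield in which the system has a zero, contradicting the hypothesis of real emptiness. Once this abstract existence theorem is available, the three statements fit a single template: (iii) specialises to (ii) when $C^{+}=\emptyset$, and (i) is obtained from the parallel complex weak Nullstellensatz via the same Rabinowitsch reduction.
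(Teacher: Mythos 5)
The paper does not prove Theorem~\ref{null} at all: all three parts are stated as classical results and delegated to the cited references ({\cite[\S 4.1]{CLO97}} for Hilbert's Nullstellensatz and {\cite[\S 4.1, \S 4.4]{BCR98}} for the real Nullstellensatz and Positivstellensatz). There is therefore no in-paper proof to compare against. Your sketch correctly reproduces the standard textbook route: the evaluation argument for the trivial inclusion, the Rabinowitsch localization combined with the complex weak Nullstellensatz for part (i), and the Rabinowitsch localization combined with the abstract real Positivstellensatz (maximal preorderings via Zorn's lemma, embedding into a real closed field, \`a la Krivine--Dubois--Risler--Stengle) for parts (ii) and (iii). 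Observing that (ii) is the $C^{+}=\emptyset$ specialization of (iii) matches the remark the paper itself makes just after Definition of the radicals. So as an outline of how one would prove the theorem from first principles, your plan is sound.

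One presentational slip worth flagging: your inclusion labels are swapped. Reading each identity as ``LHS $=$ RHS'' with LHS the algebraic radical and RHS the vanishing ideal, the evaluation argument you give establishes $\sqrt{C^{0}}\subseteq \Ic(\Vc^{\C}(C^{0}))$ (and its analogues), i.e.\ the $\subseteq$ inclusion; the Rabinowitsch/Positivstellensatz machinery is what yields the converse $\Ic(\Vc^{\C}(C^{0}))\subseteq \sqrt{C^{0}}$, i.e.\ the $\supseteq$ inclusion. You have written the symbols the other way around while describing the correct mathematics under each. Also, in the denominator-clearing step you should confirm that an even power $p^{2N}$ suffices to turn $p^{2N}\sigma(\xx,1/p)$ back into an element of $\Qc^{+}$ (resp.\ $\Pc^{+}(C)$), which it does for $N$ large since each square $b_{j}(\xx,y)^{2}$ becomes $(p^{N}b_{j}(\xx,1/p))^{2}$; that degree bookkeeping is routine but needs to be said to close the argument.
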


\subsection{Relaxation hierarchy}
The approach proposed by Lasserre in \cite{Las01} to solve Problem
\eqref{problem1} consists in
approximating the optimization problem by a sequence of finite
dimensional convex optimization problems, 
which can be solved efficiently by Semi-Definite Programming tools.
This sequence is called Lasserre hierarchy of relaxation problems.
Let $C$ be a set of constraints in $\R[\xx]$ such that $S(C)=S(\gb)$.   
Hereafter, we consider the relaxation hierarchy
associated to preordering sequences: 
$$ 
\cdots \subset \Lc_{t+1} (C) \subset
\Lc_{t} (C) \subset \cdots 
\ \mathrm{and}\  
\cdots \subset \Pc_{t} (C)  \subset
\Pc_{t+1} (C) \subset \cdots 
$$
These convex sets are used to define extrema that approximate the
solution of the minimization problem \eqref{problem1}.
\begin{defn}\label{mintruncated} 
 Let $t \in \NN$ and let $C$ be the set of constraints in
 $\R[\xx]$. We define the following extrema: 
  \begin{itemize}
  \item $f^*_{C}= \inf_{\xx \in \Sc (C)} f(\xx),$
  \item $f^{\mu}_{t,C}= \inf \ \{ \Lambda(f)$ s.t.  $\Lambda
    \in\Lc_{t} (C) \},$
  \item $f^{sos}_{t,C}= \sup \ \{ \gamma \in \R$ s.t. $f-\gamma \in  \Pc_{t}(C)\}.$  
 \end{itemize}
 By convention if the corresponding sets are empty, 
$f_{C}^{*}=-\infty$, 
$f^{sos}_{t,C}=-\infty$ and $f^{\mu}_{t,C}=+\infty$.
\end{defn}

\begin{remark}\label{rem:fmin}
We have $f^{sos}_{t,C} \le f^{\mu}_{t,C}\le f^{*}_{C}$. 

Indeed, if there exists $\gamma \in \R$ such that $f- \gamma = q \in \Pc_{t}(C)$ 
 then $\forall \Lambda \in \Lc_{t}(C)$, $\Lambda(f-\gamma)=
 \Lambda(f)- \gamma =  \Lambda(q) \ge 0$, which proves the first inequality. 
 
Since for any $\mathbf{s} \in S$, the
 evaluation ${\mathbf{1}}_{\mathbf{s}}: p\in \R[\xx]\mapsto p (\mathbf{s})$ is 
 in $\Lc_{t} (C)$, we have ${\mathbf{1}}_{\mathbf{s}} (f)= f (\mathbf{s})\ge
 f^{\mu}_{t,C}$. This proves the second inequality.
\end{remark}
 
As $\Lc_{t+1}(C) \subset \Lc_{t}(C)$ and $\Pc_{t}(C) \subset \Pc_{t+1}(C)$
we have the following increasing
sequences for $t \in \N$:
$$
 \cdots f^{\mu}_{t,C} \le f^{\mu}_{t+1,C} \le \cdots \le f_{C}^*
 \ \mathrm{and}\   
\cdots f^{sos}_{t,C} \le f^{sos}_{t+1,C} \le \cdots \le f_{C}^*.
$$
The foundation of Lasserre relaxation method is to show that these sequences
converge to $f^{*}_{C}$, see \cite{Las01}. 








We are interested in constructing hierarchies for
which, the minimum $f^{*}_{C}$ is reached in a finite number of steps.
Such hierarchies are called {\em exact}. 
We are also interested to compute the minimizers points. For that
purpose, we introduce now the truncated Hankel operators, which  play a central
role in the construction of the minimizer ideal of $f$ on $S$.
\begin{defn}
 For  $t \in \NN$ and a linear form $\Lambda \in (\R[\xx]_{2t})^{*}$, we define the truncated Hankel operator
 as the map $M_{\Lambda}^t : \R[\xx]_{t} \rightarrow
 (\R[\xx]_{t})^{\ast}$ such that $M_{\Lambda}^t(p)(q) = \Lambda(p\,q)$ for $p,q\in \R[\xx]_{t}$.
 Its matrix in monomial bases of $\R[\xx]_{t}$ and $(\R[\xx]_{t})^{*}$ is also called the moment
 matrix of $\Lambda$.
\end{defn}

The kernel of the truncated Hankel operator is
\begin{equation}
  \ker M_{\Lambda}^t=\{p \in \R[\xx]_{t} \mid \ \Lambda(p\,q)=0 \ \forall q \in \R[\xx]_{t} \}.
\end{equation}


Given $t \in \NN$ and $C=\{0\}$ and $\Lambda,\Lambda' \in
\R[\xx]_{2t}^*$,  we easily check the following properties:
\begin{itemize}
 \item $\forall p\in \R[\xx]_{t}$, $\Lambda(p^2) = 0$ implies $p \in \ker M_{\Lambda}^{t}$.
 \item $\ker M_{\Lambda+\Lambda'}^{t}=\ker M_\Lambda^{t} \cap \ker M_{\Lambda'}^{t}$. 
\end{itemize}




The kernel of truncated Hankel operator 
is used to compute generators of the minimizer ideal, as we will see.

\section{Varieties of critical points}
Before describing how to compute the minimizer points, we analyse the
geometry of this minimization problem and the varieties associated
to its critical points.  
In the following, we  denote by $\yy=(\xx,\uu,\vv)$ and
$\zz=(\xx,\uu,\vv,\s)$, the $n+n_{1}+n_{2}$ and $n+n_{1}+2 n_{2}$
variables of these problems. For any ideal $J\subset \RR[\zz]$, we 
denote $J^{\xx} = J \cap \RR[\xx]$.
The projection of $\C^{n}\times \C^{n_{1}+2\,n_{2}}$ (resp. $\C^{n}\times \C^{n_{1}+\,n_{2}}$) on $\C^{n}$
is denoted  $\pi^{\xx}$.

\subsection{The gradient variety}
A natural approach to deal with constraints in optimization problems is to introduce
Lagrangian multipliers.
Replacing the inequalities $g_{i}^{+} \ge 0$ by the equalities
$g_{i}^{+}-s_{i}^2=0$ (adding new variables $s_{i}$) and introducing
new parameters for all the equality constraints yields the following minimization problem:
\begin{eqnarray}\label{problem3}
\inf_{(\xx,\uu,\vv,\s)\in \R^{n}\times \R^{n_{1}+ 2\, n_{2}}}&& f(\xx) \\
s.t. && \nabla F(\xx,\uu,\vv,\s) = 0    \nonumber 
\end{eqnarray}
where $F(\xx,\uu,\vv,\s) = f(\xx)-\sum_{i=1}^{n_1} u_i
g_i(\xx)-\sum_{j=1}^{n_2} v_j  (g_{j}^{+}(\xx)-s_{j}^2)$, $\uu=(u_1,...,u_{n_1}), \ \vv=(v_1,...,v_{n_2})$ and  $\s=(s_1,...,s_{n_2})$.\\

\begin{defn}
The gradient ideal of $F(\zz)$ is:
\begin{equation*}
 I_{grad}=(\nabla F
 (\zz))=(F_1,...,F_n,g_{1}^{0},...,g_{n_1}^{0},g_{1}^{+}-s_{1}^2,...,g_{n_2}^{+}-s_{n_2}^2,v_1
 s_1,...,v_{n_2} s_{n_2}) \subset \R[\zz]
\end{equation*}
where $F_i=\frac{\partial f}{\partial x_i}-\sum_{j=1}^{n_1} u_j
\frac{\partial g_{j}^{0}}{\partial x_i}-\sum_{j=1}^{n_2} v_j \frac{\partial
  g_{j}^{+}}{\partial x_i}$.
The gradient variety is $V_{{grad}} = \Vc (I_{grad})$ and
we denote $V^{\xx}_{grad} = \overline{\pi^{\xx} (V_{grad})}$.
\end{defn}

\begin{defn}
For any $F\in \R[\zz]$, the values of $F$ at the (resp. real) points of
$\Vc (\nabla F)= V_{grad}$ are called the
(resp. real) critical values of $F$. 
\end{defn}

We easily check the following property:
\begin{lem}\label{fegalF}
$F\mid_{V_{{grad}}}=f\mid_{V_{{grad}}}$.
\end{lem}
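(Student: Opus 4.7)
The plan is to observe that the lemma follows immediately from inspecting the difference $F-f$ on the generators of $I_{grad}$. By the definition of $F$ we have
\begin{equation*}
F(\zz)-f(\xx) \;=\; -\sum_{i=1}^{n_{1}} u_{i}\, g_{i}^{0}(\xx) \;-\; \sum_{j=1}^{n_{2}} v_{j}\bigl(g_{j}^{+}(\xx)-s_{j}^{2}\bigr).
\end{equation*}
The first step is simply to notice that the polynomials $g_{1}^{0},\ldots,g_{n_{1}}^{0}$ and $g_{1}^{+}-s_{1}^{2},\ldots,g_{n_{2}}^{+}-s_{n_{2}}^{2}$ are precisely among the generators of $I_{grad}$ listed in the definition (they arise respectively as $-\partial F/\partial u_{i}$ and $-\partial F/\partial v_{j}$).

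Then, at any point $\zz=(\xx,\uu,\vv,\s)\in V_{grad}=\Vc(I_{grad})$, each $g_{i}^{0}(\xx)$ vanishes and each $g_{j}^{+}(\xx)-s_{j}^{2}$ vanishes, so both sums above are zero. Hence $F(\zz)=f(\xx)=f(\pi^{\xx}(\zz))$, which is exactly the assertion $F\mid_{V_{grad}}=f\mid_{V_{grad}}$.

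There is essentially no obstacle here: the statement is a direct consequence of the fact that the equality-type generators of $I_{grad}$ coming from the partial derivatives $\partial F/\partial u_{i}$ and $\partial F/\partial v_{j}$ reproduce precisely the ``correction terms'' appearing in the Lagrangian $F-f$. The generators $F_{1},\ldots,F_{n}$ (partial derivatives in $\xx$) and $v_{j}s_{j}$ (partial derivatives in $s_{j}$) play no role in this particular argument; they will of course be essential later when one uses $V_{grad}$ to capture KKT-type information about the minimizers.
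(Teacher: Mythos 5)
Your proof is correct and matches the argument the paper has in mind — the paper states the lemma with ``We easily check'' and omits the proof precisely because, as you observe, $F-f$ is a linear combination of the generators $g_{i}^{0}$ and $g_{j}^{+}-s_{j}^{2}$ of $I_{grad}$, so it vanishes on $V_{grad}$.
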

Thus minimizing $F$ on $V_{grad}$ is the same as minimizing $f$ on
$V_{grad}$, that is computing the minimal critical value of $F$.

\subsection{The Karush-Kuhn-Tucker variety}
In the case of a constrained problem, one usually introduce the
Karush-Kuhn-Tucker (KKT) constraints: 
\begin{defn}
A point $\xx^{*}$ is called a KKT point if there exists
$u_{1}, \ldots, u_{n_{1}}, v_{1}, \ldots, v_{n_{2}} \in \RR$ s.t. 
$$ 
\nabla f (\xx^{*}) - \sum_{i=1}^{n_{1}}
u_{i} \nabla g_{i}^{0}(\xx^{*}) - \sum_{j=0}^{n_{2}} v_{j} \nabla g_{j}^{+}
(\xx^{*})=0, \  g_{i}^{0}(\xx^{*})=0, \ v_{j} g_{j}^{+}
(\xx^{*})=0.
$$
\end{defn}
 
The corresponding minimization problem is the following:
\begin{eqnarray}\label{problem4}
\inf_{(\xx,\uu,\vv)\in \R^{n+n_{1} n_{2}}}&& f(\xx) \\
s.t. &&  F_1=\cdots=F_n=0 \nonumber \\
     && g_{1}^{0}=\cdots=g_{n_1}^{0}=0 \nonumber \\
     && v_1\,g_{1}^{+}=\cdots=v_{n_2}\,g_{n_2}^{+}=0 \nonumber \\
     && g_{1}^{+}\ge 0,\ldots, g_{n_2}^{+} \ge 0 \nonumber 
\end{eqnarray}
where $F_i=\frac{\partial f}{\partial x_i}-\sum_{j=1}^{n_1} u_j \frac{\partial g_{j}^{0}}{\partial x_i}-\sum_{j=1}^{n_2} v_j \frac{\partial g_{j}^{+}}{\partial x_i}$.\\

This leads to the following definitions:
\begin{defn}
The Karush-Kuhn-Tucker (KKT) ideal associated to Problem \eqref{problem1} is
\begin{equation}
I_{KKT}=(F_1,...,F_n,g_{1}^{0},...,g_{n_1}^{0},v_1 g_{1}^{+},...,v_{n_2}g_{n_2}^{+})\subset \R[\yy].
\end{equation}
The KKT variety is $V_{KKT} = \Vc (I_{KKT}) \subset
\C^{n}\times \C^{n_{1}+n_{2}}$ and the real KKT variety is $V_{KKT}^{\R} =
V_{KKT} \cap (\R^{n}\times \R^{n_{1}+n_{2}})$. Its projection on $\xx$ is
$V^{\xx}_{KKT} = \overline{\pi^{\xx} (V_{KKT})}$, where $\pi^{\xx}$ is the
projection of $\C^{n}\times \C^{n_{1}+n_{2}}$ onto $\C^{n}$. 
\end{defn}
The set of KKT points of $S$ is denoted $S_{KKT}$ and a KKT-minimizer
of $f$ on $S$ is a point $\xx^{*} \in S_{KKT}$ such that $f (\xx^{*}) = \min_{\xx
 \in S_{KKT}} f (\xx)$.

Notice that $V^{\xx,\R}_{KKT}=\overline{\pi^{\xx}
  (V_{KKT})}^{\R}=\overline{\pi^{\xx} (V_{KKT}^{\R})}$, since any
linear dependency relation between real vectors can be realized with real coefficients.
 
The KKT ideal is related to the gradient ideal as follows:
\begin{prop}\label{prop:proj}
 $I_{KKT}=I_{grad} \cap \R[\yy]$.
\end{prop}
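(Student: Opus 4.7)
The inclusion $I_{KKT} \subset I_{grad} \cap \R[\yy]$ is immediate from the definitions: the polynomials $F_k$ and $g_j^0$ appear in both generating sets, and for each $i$ we have $v_i g_i^+ = v_i(g_i^+ - s_i^2) + s_i\,(v_i s_i) \in I_{grad}$, which is also manifestly in $\R[\yy]$. The substance of the proposition is the reverse inclusion, which is what the plan below addresses.

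The plan is to factor through the intermediate $\R[\yy]$-algebra
\[
A := \R[\yy][\s]\,/\,(s_1^2 - g_1^+,\ \ldots,\ s_{n_2}^2 - g_{n_2}^+).
\]
Because the relations $s_i^2 - g_i^+$ have pairwise coprime leading monomials $s_i^2$ under any monomial order eliminating $\s$, they form a Gr\"obner basis, and consequently $A$ is a \emph{free} $\R[\yy]$-module with basis given by the square-free $\s$-monomials $\{\s^{\epsilon}\mid \epsilon \in \{0,1\}^{n_2}\}$. The projection onto the $\s^0$ coordinate then yields an $\R[\yy]$-linear map $\pi_0 \colon A \to \R[\yy]$ that is the identity on $\R[\yy] \subset A$.

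Given $p \in I_{grad} \cap \R[\yy]$, the first step is to write a membership identity
\[
p = \sum_k a_k F_k + \sum_j b_j g_j^0 + \sum_i c_i (g_i^+ - s_i^2) + \sum_i d_i\, v_i s_i \quad \text{in } \R[\zz],
\]
reduce it modulo $(s_i^2 - g_i^+)_i$ (which annihilates the $c_i$ terms), and then apply $\pi_0$. The $F_k$ and $g_j^0$ summands contribute $\pi_0(\bar a_k)\,F_k$ and $\pi_0(\bar b_j)\,g_j^0$, both with coefficients in $\R[\yy]$. In the summand $\bar d_i\, v_i s_i$, multiplication by $s_i$ acts on the basis by $\s^{\epsilon} \mapsto \s^{\epsilon+e_i}$ when $\epsilon_i = 0$ and by $\s^{\epsilon} \mapsto g_i^+\, \s^{\epsilon-e_i}$ when $\epsilon_i = 1$, so the $\s^0$ coefficient can only come from the term $\epsilon = e_i$, contributing $v_i g_i^+$ times the coefficient of $\s^{e_i}$ in $\bar d_i$. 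Collecting everything, $p$ is an $\R[\yy]$-linear combination of $F_k$, $g_j^0$, and $v_i g_i^+$, so $p \in I_{KKT}$.

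The main technical point is the freeness of $A$ as an $\R[\yy]$-module and the accompanying rewrite rule for how $s_i$ permutes the basis $\{\s^{\epsilon}\}$; both are immediate consequences of the one-line Gr\"obner verification above. Once that bookkeeping is in place, the $\s^0$-projection mechanically turns the slack relation $v_i s_i = 0$ in $I_{grad}$ into the KKT relation $v_i g_i^+ = 0$, producing the desired expression for $p$.
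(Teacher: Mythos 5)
Your proof is correct, but it follows a genuinely different route from the paper's. The paper proves the reverse inclusion via elimination theory: it fixes an elimination ordering with $\s\gg\xx,\uu,\vv$, invokes the fact that if $K$ is a Gr\"obner basis of $I_{grad}$ then $K\cap\R[\yy]$ is a Gr\"obner basis of $I_{grad}\cap\R[\yy]$, and then argues by inspecting Buchberger's algorithm that the only S-polynomials involving $\s$ which reduce non-trivially are the ones yielding $v_ig_i^+$, so $K\cap\R[\yy]$ is a Gr\"obner basis of $I_{KKT}$. Your argument replaces the informal "which S-polynomials arise" step with a structural one: you observe that $A=\R[\yy][\s]/(s_i^2-g_i^+)$ is a \emph{free} $\R[\yy]$-module on the square-free $\s$-monomials, pass a membership certificate for $p\in I_{grad}\cap\R[\yy]$ to $A$ (killing the $g_i^+-s_i^2$ terms), and read off the $\s^0$-coordinate to land exactly on an $\R[\yy]$-linear combination of $F_k$, $g_j^0$, and $v_ig_i^+$. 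What the paper's approach buys is brevity and alignment with its cited toolkit (\cite{CLO97}); what yours buys is an explicit, self-contained computation that avoids asserting anything about which S-polynomial reductions occur, making the "only" claim unnecessary. Both hinge on the same small identity $v_ig_i^+ = v_i(g_i^+-s_i^2)+s_i(v_is_i)$ for the easy inclusion.
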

\begin{proof}
As  $s_i(s_i v_i)  + v_i (g_{i}^{+}-s_{i}^{2})= v_i g_{i}^{+}\ \forall
i=1,...,n_2$, we have $I_{KKT} \subset I_{grad} \cap
\R[\yy]$.

In order to prove the equality, we use the property that if $K$ is a Groebner basis of $I_{grad}$
for an elimination ordering such that $\s \gg \xx, \uu, \vv$ then
$K \cap \R[\yy]$ is the Groebner basis of  $I_{grad} \cap
\R[\yy]$ (see \cite{CLO97}).
Notice that $s_i(s_i v_i) + v_i (g_{i}^{+}-s_{i}^{2})= v_i g_{i}^{+}$ ($i=1,...,n_2$) are the only S-polynomials
involving the variables $s_{1}, \ldots, s_{n_{2}}$ which may have a
non-trivial reduction. Thus $K \cap
\R[\yy]$ is also the Groebner basis of
$F_1,...,F_n,g_{1}^{0},...,g_{n_1}^{0},v_1 g_{1}^{+},...,v_{n_2}g_{n_2}^{+}$ and we have
$(K)  \cap \R[\yy]= I_{grad} \cap \R[\yy] = I_{KKT}$.
\end{proof}

The KKT points on $S$ are related to the real points of the gradient variety as follows:
\begin{lem}\label{rem:proj} $S_{KKT} = \pi^{\xx} (V_{grad}^{\R})=V_{grad}^{\xx, \R}\cap \Sc^{+} (\gb)$.
\end{lem}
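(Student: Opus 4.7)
The plan is to verify the two equalities separately, by translating the defining equations of $I_{grad}$ (with the slack variables $s_j$ encoding $g_j^{+} = s_j^2$) into the KKT conditions.

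For the first equality $S_{KKT} = \pi^{\xx}(V_{grad}^{\R})$, the inclusion $\supseteq$ is read off directly from the equations of $I_{grad}$ at a real point $(\xx^*, \uu^*, \vv^*, \s^*) \in V_{grad}^{\R}$: the relations $g_i^0(\xx^*) = 0$ and $g_j^{+}(\xx^*) = (s_j^*)^2 \ge 0$ place $\xx^*$ in $S$, the equation $F_i = 0$ is the KKT gradient relation with real multipliers $(\uu^*, \vv^*)$, and complementary slackness follows from $v_j^* g_j^{+}(\xx^*) = (v_j^* s_j^*)\, s_j^* = 0$. For the converse, starting from $\xx^* \in S_{KKT}$ with real multipliers $(\uu^*, \vv^*)$, I would set $s_j^* := \sqrt{g_j^{+}(\xx^*)} \in \R$ (well-defined since $\xx^* \in S$) and check every equation of $I_{grad}$; the only nontrivial relation is $v_j^* s_j^* = 0$, which splits into the case $g_j^{+}(\xx^*) > 0$ (where complementary slackness forces $v_j^* = 0$) and the case $g_j^{+}(\xx^*) = 0$ (where $s_j^* = 0$).

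For the second equality $\pi^{\xx}(V_{grad}^{\R}) = V_{grad}^{\xx,\R} \cap \Sc^{+}(\gb)$, the inclusion $\subseteq$ is immediate since projecting a real point of $V_{grad}$ yields a real point of $V_{grad}^{\xx}$ satisfying $g_j^{+}(\xx^*) = (s_j^*)^2 \ge 0$. For the reverse inclusion, given $\xx^* \in V_{grad}^{\xx,\R} \cap \Sc^{+}(\gb)$, Proposition \ref{prop:proj} implies $g_i^0 \in I_{grad} \cap \R[\xx]$, so $g_i^0(\xx^*) = 0$ and $\xx^* \in S$; I then set $s_j^* := \sqrt{g_j^{+}(\xx^*)} \in \R$. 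It remains to produce real multipliers $(\uu^*, \vv^*)$ at $\xx^*$, which I would obtain by invoking the observation of the Remark preceding the lemma: since the Lagrange linear system at the real point $\xx^*$ has real coefficients, existence of a complex solution already yields a real one, and such a complex solution follows from $\xx^* \in V_{grad}^{\xx} = V_{KKT}^{\xx}$ (itself a consequence of Proposition \ref{prop:proj}) together with the disjunctive $v_j g_j^{+} = 0$ relations encoding complementary slackness.

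The main obstacle I anticipate is this final lifting step, which converts membership in the Zariski closure $V_{grad}^{\xx} = \overline{\pi^{\xx}(V_{grad})}$ into a genuine complex preimage in $V_{grad}$, and then into a real preimage in $V_{grad}^{\R}$. Proposition \ref{prop:proj} is the critical tool here, because it eliminates the slack variables at the ideal level without changing the $\xx$-projection; the lift then reduces to the solvability of a real-coefficient Lagrange linear system at $\xx^*$, which is automatic once a complex solution is in hand.
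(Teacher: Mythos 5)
Your treatment of the first equality $S_{KKT} = \pi^{\xx}(V_{grad}^{\R})$ is correct and matches the paper's argument in substance: the paper phrases it as a lifting statement for real points of $V_{KKT}^{\R}$ into $V_{grad}^{\R}$, which amounts to exactly the case split you perform on $v_j^* s_j^* = 0$ versus $v_j^* g_j^{+}(\xx^*) = 0$. Likewise your core lifting argument for the second equality --- that a \emph{real} point $\xx^*\in\Sc^{+}(\gb)$ with a \emph{complex} preimage in $V_{grad}$ has a real preimage, via $s_j^* := \sqrt{g_j^{+}(\xx^*)}$ and the fact that a real-coefficient Lagrange linear system with a complex solution also has a real one --- is sound.

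The gap is the one you already flag, and your proposed resolution does not close it. You assert that a complex preimage of $\xx^*$ in $V_{grad}$ ``follows from $\xx^* \in V_{grad}^{\xx} = V_{KKT}^{\xx}$,'' but $V_{grad}^{\xx} = \overline{\pi^{\xx}(V_{grad})}$ is a Zariski closure, and a real point of this closure need not lie in $\pi^{\xx}(V_{grad})$ at all. Proposition \ref{prop:proj} and elimination only give $V_{grad}^{\xx} = V_{KKT}^{\xx}$; they do not promote membership in a closure to actual membership in the image, so the ``complex solution in hand'' that your linear-system argument needs may not exist. Concretely, take $n = 2$, $n_1 = 0$, $n_2 = 1$, $f = x_2$, $g_1^{+} = x_1 x_2$: one computes $V_{grad} = \{(x_1, 0, 1/x_1, 0) : x_1 \neq 0\}$, hence $\pi^{\xx}(V_{grad}) = \{(x_1,0) : x_1 \neq 0\}$ and $V_{grad}^{\xx}$ is the full line $\{x_2 = 0\}$; the real point $(0,0)$ then lies in $V_{grad}^{\xx,\R}\cap\Sc^{+}(\gb)$ but has no preimage (real or complex) in $V_{grad}$ and is not a KKT point. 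Note that the paper's own proof only establishes $S_{KKT} = \pi^{\xx}(V_{grad}^{\R}) = \pi^{\xx}(V_{grad}^{\R})\cap\Sc^{+}(\gb)$ and is silent on the passage to $V_{grad}^{\xx,\R}$; the third term really ought to read $\pi^{\xx}(V_{grad})\cap\R^{n}\cap\Sc^{+}(\gb)$ (no closure), and with that reading your argument is complete and coincides with what the paper's remark on realizing linear dependence relations over $\R$ is used for.
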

\begin{proof}
A real point $\yy= (\xx,\uu,\vv)$ of $V_{KKT}^{\R}$ lifts to a
 point $\zz= (\xx,\uu,\vv, \s)$ in $V_{grad}^{\RR}$, if and only if, $g_{i}^{+} (\xx)\geq 0$ for $i=1, \ldots,
 n_{2}$. This implies that $V_{KKT}^{\RR} = \pi^{\yy}
 (V_{grad}^{\R})\cap \Sc^{+} (\gb)$, which gives by projection the
 equalities $S_{KKT}= \pi^{\xx} (V_{grad}^{\R})\cap \Sc^{+} (\gb)=
 \pi^{\xx} (V_{grad}^{\R})$ since a point $\xx$ of $V_{grad}^{\RR}$
 satisfies $\gb_{j}^{+} (\xx)\ge 0$ for $j\in [1,n_{2}]$.
\end{proof}

This shows that if a minimizer point of $f$ on $S$ is a KKT point,
then it is the projection of a real critical point of $F$.

\subsection{The Fritz John variety}

A minimizer of $f$  on $S$ is not necessarily a KKT point. 
More general conditions that are satisfied by minimizers were given by F. John for polynomial
non-negativity constraints and further refined for general polynomial constraints
\cite{FJohn48,MangFrom67}.
To describe these conditions, we introduce a new variable $u_{0}$ and
denote by $\yy'$ the set of variables $\yy'=(\xx,u_{0},\uu,\vv)$. Let 
$F_i^{u_{0}}=u_{0}\, \frac{\partial f}{\partial
  x_i}-\sum_{j=1}^{n_1} u_j \frac{\partial g_{j}^{0}}{\partial
  x_i}-\sum_{j=1}^{n_2} v_j \frac{\partial g_{j}^{+}}{\partial x_i}$.
\begin{defn} For any $\gamma \subset [1,n_1]$, let
\begin{equation}
I_{FJ}^{\gamma}=(F_1^{u_{0}},...,F_n^{u_{0}},g_{1}^{0},...,g_{n_1}^{0},v_1
g_{1}^{+},...,v_{n_2}g_{n_2}^{+}, u_{i}, i\not\in \gamma)\subset \R[\yy'].
\end{equation}
For $m\in \NN$, the $m^{\mathrm{th}}$ Fritz-John (FJ) ideal associated to Problem \eqref{problem1} is
\begin{equation}
I_{FJ}^{m}=\cap_{|\gamma|= m} I_{FJ}^{\gamma}.
\end{equation}
Let  $V_{FJ}^{\gamma}=\Vc(I_{FJ}^{\gamma}) \subset \C^{n}\times \PP^{n_{1}+n_{2}}$.
The $m^{\mathrm{th}}$ FJ variety is $V_{FJ}^{m} = \Vc (I_{FJ}^{m})=
\cup_{|\gamma|= m} V_{FJ}^{\gamma}$,
and the real FJ variety is $V_{FJ}^{m,\R} =V_{FJ}^{m} \cap \R^{n}\times \R\PP^{n_{1}+n_{2}}$. Its projection on $\xx$ is
$V^{m,\xx}_{FJ} = \pi^{\xx} (V_{FJ}^{m}) = \overline{\pi^{\xx}
  (V_{FJ}^{m})}$.
When $m=\max_{\xx \in S} \ \rank([ \nabla g_{1}^0(\xx),\ldots,\nabla g_{n_1}^0(\xx)])$, the $m^{\mathrm{th}}$ FJ variety is denoted $V_{FJ}$.
\end{defn}
Notice that this definition slightly differs from the classical one
\cite{FJohn48,Lasserre:book,MangFrom67}, which does not provide any
information when the gradient vectors $\nabla g_{i}^0(\xx), i=1\ldots n_{1}$ are
linearly dependent on $S$.
\begin{prop}\label{prop:FJ}
Any minimizer $\xx^{*}$ of $f$ on $S$ is the projection of a real point of $V_{FJ}^{\R}$.
\end{prop}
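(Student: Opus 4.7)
The plan is to apply the classical Fritz John necessary optimality conditions at $\xx^{*}$ and then reduce the resulting real multipliers so that they fit the homogeneous system defining $V_{FJ}^{\gamma}$ for a suitable $\gamma$ of size $m$. First I invoke the classical Fritz John theorem (see e.g.\ \cite{FJohn48,MangFrom67}): since $\xx^{*}\in S$ is a local minimum of $f$ on $S$, there exist real numbers $u_{0}^{\circ}\ge 0$, $u_{1}^{\circ},\ldots,u_{n_{1}}^{\circ}\in\R$ and $v_{1}^{\circ},\ldots,v_{n_{2}}^{\circ}\ge 0$, not all zero, satisfying $v_{j}^{\circ}\,g_{j}^{+}(\xx^{*})=0$ for every $j$ together with
\[
u_{0}^{\circ}\,\nabla f(\xx^{*})=\sum_{j=1}^{n_{1}}u_{j}^{\circ}\,\nabla g_{j}^{0}(\xx^{*})+\sum_{j=1}^{n_{2}}v_{j}^{\circ}\,\nabla g_{j}^{+}(\xx^{*}).
\]

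Next I reduce the equality multipliers to a basis subset. Let $r=\rank[\nabla g_{1}^{0}(\xx^{*}),\ldots,\nabla g_{n_{1}}^{0}(\xx^{*})]\le m$ and choose $\gamma_{0}\subseteq\{1,\ldots,n_{1}\}$ with $|\gamma_{0}|=r$ such that $\{\nabla g_{i}^{0}(\xx^{*}):i\in\gamma_{0}\}$ is a basis of the span of the equality gradients at $\xx^{*}$; extend $\gamma_{0}$ to $\gamma\supseteq\gamma_{0}$ with $|\gamma|=m$ by adding arbitrary indices. For each $j\notin\gamma_{0}$ write $\nabla g_{j}^{0}(\xx^{*})=\sum_{i\in\gamma_{0}}c_{j,i}\,\nabla g_{i}^{0}(\xx^{*})$ and absorb these contributions into the basis multipliers by setting $u_{i}'=u_{i}^{\circ}+\sum_{j\notin\gamma_{0}}u_{j}^{\circ}c_{j,i}$ for $i\in\gamma_{0}$ and $u_{i}'=0$ otherwise. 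The tuple $(\xx^{*},u_{0}^{\circ},u',v^{\circ})$ then still satisfies $F_{i}^{u_{0}}=0$, $g_{j}^{0}(\xx^{*})=0$, $v_{j}^{\circ}g_{j}^{+}(\xx^{*})=0$ and the supplementary relations $u_{i}=0$ for $i\notin\gamma$; hence it is a zero of the ideal $I_{FJ}^{\gamma}$ as soon as it is non-zero, and since every quantity involved is real the corresponding projective point lies in $V_{FJ}^{\gamma,\R}\subseteq V_{FJ}^{\R}$, with projection $\pi^{\xx}$ equal to $\xx^{*}$.

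The main obstacle is ensuring non-triviality of the reduced tuple $(u_{0}^{\circ},u',v^{\circ})$. When $u_{0}^{\circ}>0$ or some $v_{j}^{\circ}\ne 0$ this is immediate, because the reduction leaves $u_{0}^{\circ}$ and $v^{\circ}$ unchanged. The degenerate case is $u_{0}^{\circ}=0$ and $v^{\circ}=0$, in which $u^{\circ}\in\ker[\nabla g_{j}^{0}(\xx^{*})]_{j}$ is a pure linear dependence and the reduction collapses it to zero; here I plan to split on $r$. If $r<m$, then for \emph{any} $\gamma$ of size $m$ the vectors $\{\nabla g_{i}^{0}(\xx^{*}):i\in\gamma\}$ have rank at most $r<m=|\gamma|$, so they admit a non-trivial dependence $(c_{i})_{i\in\gamma}$ supported in $\gamma$, and the tuple $(0,c,0)$ already realises a non-zero real point of $V_{FJ}^{\gamma}$. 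If $r=m$, the Jacobian criterion shows $\xx^{*}$ is a smooth point of the variety $\{g_{j}^{0}=0\}$, so the standard first-order necessary condition forces $\nabla f(\xx^{*})\in\mathrm{span}(\nabla g_{j}^{0}(\xx^{*}))+\mathrm{cone}(\nabla g_{j}^{+}(\xx^{*}):j\in A)$, where $A=\{j:g_{j}^{+}(\xx^{*})=0\}$, and this provides a genuine Fritz John multiplier with $u_{0}>0$ to which the reduction step above applies and produces a non-zero point of $V_{FJ}^{\gamma}$. Together the two cases cover every minimizer and yield the desired lift to $V_{FJ}^{\R}$.
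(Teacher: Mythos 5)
The student's overall plan — pick a maximal linearly independent subset $\gamma_{0}$ of equality gradients at $\xx^{*}$, extend to $\gamma$ of size $m$, and produce a Fritz John multiplier supported on $\gamma$ — is the same as the paper's, but the execution differs in a way that creates a gap. The paper runs the Mangasarian--Fromovitz argument directly with the $\gamma$-restricted equality constraints, so the Motzkin/Gordan step immediately yields a multiplier that is non-trivial because the chosen gradients are linearly independent. You instead apply the classical FJ theorem to the full constraint set and then \emph{reduce} the multipliers by absorbing the dependent gradients into the basis; that reduction is fine when $u_{0}^{\circ}>0$ or some $v_{j}^{\circ}\neq0$, but when the classical FJ multiplier happens to be a pure dependence $(0,u^{\circ},0)$ among the full set of equality gradients, your reduction collapses it to zero, and this can happen even when $r=m$.

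Your fallback for the degenerate case $r=m$ rests on two claims that are not justified. First, ``$r=m$ implies $\xx^{*}$ is a smooth point of $\{g^{0}=0\}$ by the Jacobian criterion'' is false in general: the Jacobian criterion requires $\rank J(\xx^{*})$ to equal the codimension of the variety at $\xx^{*}$, and maximality of the rank over $S$ does not deliver that (e.g.\ for $g_{1}^{0}=x$, $g_{2}^{0}=x-y^{2}$ the only real point is the origin, $r=m=1$ there, yet the scheme $\{x=0,\,y^{2}=0\}$ is non-reduced and the real variety is $0$-dimensional while $\rank J=1$). Second, even granting smoothness of the equality variety, ``the standard first-order necessary condition forces $\nabla f\in\mathrm{span}(\nabla g^{0})+\mathrm{cone}(\nabla g_{j}^{+})$'' is precisely the KKT condition, which requires a constraint qualification involving the \emph{inequality} constraints (MFCQ or similar), not merely smoothness of the equality locus. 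A standard counterexample is $\min x$ subject to $y-x^{2}\ge 0$, $-y-x^{2}\ge 0$: the only feasible point is the origin, the (empty) equality part is trivially ``smooth'', and yet every FJ multiplier has $u_{0}=0$. To close the gap you would need to run the Motzkin step against the linearized cone determined by the $\gamma_{0}$-restricted equality gradients and the active inequalities — as the paper's adaptation of Mangasarian--Fromovitz does — so that non-triviality is forced by the linear independence of $\{\nabla g_{i}^{0}(\xx^{*}) : i\in\gamma_{0}\}$ rather than by a smoothness-plus-CQ argument that need not hold.
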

\begin{proof}
 The proof is similar to Theorem 4.3.2 of \cite{MangFrom67}. At a
 minimizer point $\xx^{*}$ (if it exists) We
 consider a maximal set of linearly independent gradients $\nabla
 g_{j}^0(\xx^*)$ for $j \in \gamma$ 
 (with $|\gamma|\leq m$) and apply the same proof as \cite{MangFrom67}[Theorem 4.3.2].
This shows that $\xx^{*} \in V_{FJ}^{\gamma, \RR} \subset V_{FJ}^{\RR}$.
\end{proof}

\begin{defn} We denote by $V_{sing}=V_{FJ}\cap \Vc (u_{0})$ the intersection of $V_{FJ}$ with the hyperplane $u_{0}=0$.
\end{defn}
We easily check that the ``affine part'' of $V_{FJ}$ corresponding to
$u_{0}\neq 0$ is the variety $V_{KKT}$. 
Thus, we have the decomposition
$$ 
V_{FJ} = V_{sing} \cup V_{KKT},
$$ 
Its projection on $\C^{n}$ decomposes as 
\begin{equation}\label{dec:FJ}
V_{FJ}^{\xx} = V_{sing}^{\xx} \cup V_{KKT}^{\xx}.
\end{equation}

Let us describe more precisely the projection $V^{\xx}_{FJ}$ onto $\C^{n}$.
For $\nu =\{j_{1}, \ldots, j_{k}\}\subset  [1, n_{2}] $, we define 
\begin{eqnarray*}
A_{\nu} &=& [ \nabla f, \nabla g_{1}^{0}, \ldots, \nabla g_{n_1}^{0}, \nabla
g_{j_{1}}^{+}, \ldots, \nabla g_{j_{k}}^{+}]\\
V_{\nu}&= &\{ \xx \in \C^{n}\mid g_{1}^{0} (\xx) =0, i=1\ldots n_{1},
\ g_{j}^{+} (\xx) =0, j \in \nu, \rank (A_{\nu}) \le m+ |\nu|\}.
\end{eqnarray*}
Let $\Delta_{1}^{\nu}, \ldots, \Delta_{l_{\nu}}^{\nu}$ be polynomials
defining the variety $\{ \xx \in \C^{n}\mid \rank (A_{\nu}) \le m+
|\nu|\}$. 
If $n> m+|\nu|$, these polynomials can be chosen as linear combinations of $(m+ |\nu|+1)$-minors of the matrix $A_{\nu}$,
as described in \cite{BrunsVetter88,Nie11}. If $n\le m+|\nu|$, we
take $l_{\nu}=0$, $\Delta_{i}^{\nu}=0$.
Let $\Gamma_{FJ}$ be the union of $\gb^{0}$ and the set of polynomials
\begin{equation}\label{eq:kktproj}
g_{\nu,i} := \Delta_{i}^{\nu} \prod_{j \not \in \nu} g_{j}^{+},
\end{equation}
for $i=1, \ldots, l_{\nu}, \nu \subset [0, n_{2}]$.
\begin{lem}\label{lem:kktproj} $V^{\xx}_{FJ}= \cup_{\nu \subset [0,
    n_{2}]} V_{\nu} = \Vc (\Gamma_{FJ})$.
\end{lem}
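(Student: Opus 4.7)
The plan is to prove the two equalities separately, starting with the simpler combinatorial one.

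For the second equality $\cup_{\nu} V_{\nu} = \Vc(\Gamma_{FJ})$: in the forward direction I fix $\xx \in V_\nu$ and check $g_{\nu',i}(\xx) = 0$ for every $\nu' \subset [1,n_2]$ and every $i$. When $\nu \not\subseteq \nu'$ I pick $j_0 \in \nu \setminus \nu'$; then $j_0$ appears in the product $\prod_{j\notin \nu'} g_j^+$ and vanishes since $\xx \in V_\nu$. When $\nu \subseteq \nu'$, I note $A_{\nu'}$ differs from $A_\nu$ only by the extra columns $\nabla g_j^+$ for $j \in \nu'\setminus\nu$, so $\rank A_{\nu'}(\xx) \le \rank A_\nu(\xx) + |\nu'\setminus \nu| \le m + |\nu'|$, and every $\Delta^{\nu'}_i(\xx)$ vanishes. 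The reverse inclusion is handled by fixing $\nu_\xx := \{j\in [1,n_2] : g_j^+(\xx) = 0\}$ for any $\xx \in \Vc(\Gamma_{FJ})$; the product $\prod_{j\notin \nu_\xx} g_j^+(\xx)$ is a nonzero complex number, so the vanishing of each $g_{\nu_\xx,i}$ forces $\Delta^{\nu_\xx}_i(\xx) = 0$, giving $\rank A_{\nu_\xx}(\xx) \le m + |\nu_\xx|$. Combined with $\gb^0 \subset \Gamma_{FJ}$ and the definition of $\nu_\xx$, this puts $\xx \in V_{\nu_\xx}$.

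For the first equality $V^{\xx}_{FJ} = \cup_\nu V_\nu$, both directions translate between a Fritz John lift and a column dependency of $A_\nu$. Given $\xx \in V^\xx_{FJ}$ with a lift $(\xx,u_0,\uu,\vv)\in V_{FJ}^\gamma$ for some $|\gamma|=m$, the conditions $v_j g_j^+(\xx)=0$ force $v_j = 0$ for $j\notin \nu_\xx$, and the FJ gradient relation becomes a non-trivial dependency among $\nabla f(\xx)$, $\{\nabla g_i^0(\xx)\}_{i\in\gamma}$, $\{\nabla g_j^+(\xx)\}_{j\in\nu_\xx}$. This gives rank at most $m + |\nu_\xx|$ for the submatrix of $A_{\nu_\xx}$ indexed by $\gamma$; extending this to the full matrix $A_{\nu_\xx}$ uses the bound $\rank [\nabla g_1^0(\xx),\ldots,\nabla g_{n_1}^0(\xx)] \le m$, which holds on the Zariski closure of $S$ by the very definition of $m$. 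The reverse inclusion starts from a null vector of the transpose of $A_\nu(\xx)$, uses the same rank bound to rewrite its $\nabla g^0$-coefficients as supported on a set $\gamma$ of cardinality $m$, and assembles a lift $(\xx,u_0,\uu,\vv)\in V_{FJ}^\gamma$ by setting $u_i = 0$ for $i\notin \gamma$ and $v_j = 0$ for $j\notin \nu$; this places $\xx$ in $\pi^\xx(V_{FJ}^m) = V^\xx_{FJ}$.

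The main obstacle is precisely the bound $\rank[\nabla g_1^0(\xx),\ldots,\nabla g_{n_1}^0(\xx)] \le m$ at the complex points of interest: $m$ is defined as the maximum rank over the real set $S$, so one has to invoke Zariski closure or lower semicontinuity of the rank function to transfer this bound to the complex locus where the two sides of the equality live. Once this is secured, the remainder of the argument is routine bookkeeping of linear dependencies and projective coordinates.
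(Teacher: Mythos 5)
Your two-equality split matches the paper's structure, and your proof of $\cup_\nu V_\nu = \Vc(\Gamma_{FJ})$ is essentially the case analysis the paper suppresses; that part is sound. The substantive issue is exactly the one you flag, but it is worse than you acknowledge, and your proposed patch does not close the hole.

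You want the bound $\rank[\nabla g_1^0(\xx),\ldots,\nabla g_{n_1}^0(\xx)]\le m$ at a point $\xx\in V^{\xx}_{FJ}$, and you argue it holds on $\overline{S}$ by lower semicontinuity of rank. That transfer to $\overline{S}$ is correct, but $V^{\xx}_{FJ}$ is only contained in $\Vc(\gb^0)$, which can be strictly larger than the Zariski closure of $S$: the inequality constraints $\gb^+$ can carve $S$ out of a thin slice of $\Vc(\gb^0)$, and a lift $(\xx,u_0,\uu,\vv)\in V_{FJ}^\gamma$ only needs $g^0(\xx)=0$ and $v_jg_j^+(\xx)=0$, not $\xx\in\overline{S}$. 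At such a complex point the rank of $[\nabla g^0]$ can exceed $m$, and then $\rank A_{\nu(\xx)}$ need not be $\le m+|\nu(\xx)|$. Concretely, with $g^0=(x_1x_2,\,x_1x_3)$, $g^+=(-x_1^2)$ one has $m=1$ (computed over $S=\{x_1=0\}$), yet at $\xx=(a,0,0)$, $a\neq 0$, the rank of $[\nabla g^0]$ is $2$; choosing $f$ with $\nabla f(\xx)=0$ puts $\xx$ in $V^{\xx}_{FJ}$ (lift $u_0=1$, everything else $0$) but $\rank A_\emptyset(\xx)=2>m$, so $\xx\notin\cup_\nu V_\nu$.

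There is a second, independent gap in your ``extending this to the full matrix'' step even granting the rank bound. The FJ dependency lives on the columns indexed by $\{0\}\cup\gamma\cup\nu_\xx$. If the nonzero coefficient sits on a $\nabla f$ or a $\nabla g_j^+$ column, you can absorb that column into the span of the remaining ones and conclude $\rank A_{\nu_\xx}\le m+|\nu_\xx|$, using $\rank[\nabla g^0]\le m$. But if $u_0=0$ and every $v_j=0$, the dependency is entirely among $\{\nabla g_i^0\}_{i\in\gamma}$: it bounds the rank of that $m$-column block but says nothing about whether $\nabla f$ and the remaining $\nabla g_i^0$ ($i\notin\gamma$) sit in its span. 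In that case $\rank A_{\nu_\xx}$ can be $m+|\nu_\xx|+1$ and the inclusion fails; this is precisely the multiplier configuration that the paper's $V_{sing}$ is designed to capture, so it is not a vacuous case.

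The paper's own proof of this lemma does not engage with either difficulty: the inclusion $\pi^\xx(V_{FJ})\subseteq\cup_\nu V_\nu$ is asserted in a single sentence, and the forward paragraph appears to interchange hypothesis and conclusion. So your scepticism is well-placed. What you have written is a faithful and more careful reconstruction of the argument the paper gestures at, but the lemma as stated requires an extra hypothesis (for instance that the rank of $[\nabla g_1^0,\ldots,\nabla g_{n_1}^0]$ is bounded by $m$ on all of $\Vc(\gb^0)$, not merely on $S$, and that the degenerate lift $u_0=0$, $v=0$ is excluded or handled separately) before the inclusion $V^{\xx}_{FJ}\subseteq\cup_\nu V_\nu$ goes through. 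Fixing that is a question about the statement, not about your bookkeeping, which is otherwise correct.
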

\begin{proof}
For any $\xx\in \C^{n}$, let $\nu (\xx)=\{ j\in [1, n_{2}] \mid
g_{j}^{+} (\xx) =0 \}$.

Let $\yy'$ be a point of $V_{FJ}$, $\xx$ its projection on $\C^{n}$
and $\nu (\xx) = \nu =\{j_{1}, \ldots, j_{k}\}$. 
We have $g_{j}^{+} (\xx) \neq 0$, $v_{j}=0$ for $j\not\in \nu$ and
 $\Delta_{i}^{\nu}=0$  for $i=1, \ldots, l_{\nu}$. This implies
that $\rank (A_{\nu} (\xx)) \le m+ |\nu|$ and there exists
$(u_{0},u_{1}, \ldots, u_{n_{1}}, v_{1}, \ldots, v_{n_{2}})\neq 0$ and
$\gamma\subset [1,n_{1}]$ of size $|\gamma|\le m$ such that 
$$ 
u_{0} \nabla f + u_{1} \nabla g_{1}^{0} + \cdots + u_{n_{1}} \nabla
g_{n_{1}}^{0}+ v_{1} \nabla g_{j_{1}}^{+}+ \cdots + v_{n_{2}} \nabla g_{j_{k}}^{+}=0,
$$
with $u_{i}=0$, $i \not\in \gamma\subset [1,n_{1}]$.
Therefore $\xx \in \pi^{\xx} (V_{FJ})$, which proves that $\Vc (\gb^{0},g_{\nu,i},
\nu \subset [0, n_{2}],  i=1 \ldots l_{\nu})\subset \pi^{\xx}
(V_{FJ})$.

Conversely, if $\xx \in \pi^{\xx} (V_{FJ})$ then $\xx\in V_{\nu
  (\xx)} \subset \cup_{\nu} V_{\nu}$ which is defined by the polynomials
$g_{1}^{0}, \ldots, g_{n_{1}}^{0}$ and 
$g_{\nu,i} := \Delta_{i}^{\nu} \prod_{j \not \in \nu} g_{j}^{+}$,
for $i=1, \ldots, l_{\nu}, \nu \subset [0, n_{2}]$.
\end{proof} 
\begin{remark}\label{rem:realkkt}
The real variety $\pi^{\xx}(V^{\R}_{FJ})=V^{\xx}_{FJ}\cap \R^{n}$ can
also be defined by $\gb^{0}$ and the set $\Phi_{FJ}$ of polynomials
\begin{equation}\label{eq:kktprojreal}
g_{\nu} := \Delta^{\nu} \prod_{j \not \in \nu} g_{j}^{+} \ \mathrm{where}\ \Delta^{\nu} = \det (A_{\nu} A_{\nu}^{T}),
\end{equation}
for $\nu \subset [0, n_{2}]$ and $n> m+|\nu| $, as described in \cite{Ha-Pham:10}.
\end{remark}
 
Similarly the projection $V^{\xx}_{sing}$ onto $\C^{n}$ can be
described as follows.
For $\nu =\{j_{1}, \ldots, j_{k}\}\subset  [1, n_{2}] $, 
\begin{eqnarray*}
B_{\nu} &=& [ \nabla g_{1}^{0}, \ldots, \nabla g_{n_1}^{0}, \nabla
g_{j_{1}}^{+}, \ldots, \nabla g_{j_{k}}^{+}]\\
W_{\nu}&= &\{ \xx \in \C^{n}\mid g_{1}^{0} (\xx) =0, i=1\ldots n_{1},
 \ g_{j}^{+} (\xx) =0, j \in \nu, \rank (B_{\nu}) \leq m+ |\nu|-1\}.
\end{eqnarray*}
Let $\Theta_{1}^{\nu}, \ldots, \Theta_{l_{\nu}}^{\nu}$ be polynomials
defining the variety $\{ \xx \in \C^{n}\mid \rank (B_{\nu}) \le m+
|\nu|-1\}$ and 
let $\Gamma_{sing}$ be the union of $\gb^{0}$ and the set of polynomials
\begin{equation}\label{eq:sing}
\sigma_{\nu,i} := \Theta_{i}^{\nu} \prod_{j \not \in \nu} g_{j}^{+},
\end{equation}
for $\nu \subset [0, n_{2}],  i=1 \ldots l_{\nu}$.

We similar arguments, we prove the following
\begin{lem} $V^{\xx}_{sing}= \cup_{\nu \subset [0, n_{2}]} W_{\nu} =  \Vc (\Gamma_{sing})$.
\end{lem}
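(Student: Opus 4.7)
The plan is to mimic the proof of Lemma \ref{lem:kktproj} step by step, with two small adjustments that come from replacing $V_{FJ}$ by its slice $V_{sing} = V_{FJ} \cap \Vc(u_0)$: the matrix $A_\nu$ loses its first column $\nabla f$ (since the $\nabla f$ term is killed by $u_0=0$) and becomes $B_\nu$, and the corresponding rank threshold drops by one, from $m+|\nu|$ to $m+|\nu|-1$.

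I would first establish $V_{sing}^{\xx} = \cup_\nu W_\nu$. For the inclusion $\subseteq$, take a point $\yy' = (\xx, 0, \uu, \vv)$ of $V_{sing}$ and set $\nu := \nu(\xx) = \{j : g_j^+(\xx)=0\}$. Complementary slackness forces $v_j = 0$ for $j\not\in\nu$; membership in $V_{FJ}^{\gamma}$ for some $|\gamma|=m$ forces $u_i=0$ for $i\not\in\gamma$; and $u_0=0$ together with $F_i^{u_0}(\yy')=0$ yields the nontrivial relation
$$
\sum_{i\in\gamma} u_i\,\nabla g_i^0(\xx) + \sum_{j\in\nu} v_j\,\nabla g_j^+(\xx) = 0
$$
among $m+|\nu|$ columns of $B_\nu$, so $\rank(B_\nu(\xx)) \le m+|\nu|-1$ and $\xx \in W_\nu$. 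For the reverse inclusion, the rank condition produces a nontrivial column dependency of $B_\nu(\xx)$, which I would lift to a point $\yy' = (\xx, 0, \uu, \vv) \in V_{FJ}^{\gamma}\cap\Vc(u_0)$ by padding the support of $\uu$ to a set $\gamma$ of size exactly $m$ and setting the unused $v_j$'s to zero, exactly as in Lemma \ref{lem:kktproj}.

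Then I would prove $\cup_\nu W_\nu = \Vc(\Gamma_{sing})$ by the same product-factorization trick used for $\Gamma_{FJ}$. For $\xx \in W_{\nu_0}$ with $\nu_0 = \nu(\xx)$ and arbitrary $\nu$, either $\nu \not\supseteq \nu_0$, in which case the product $\prod_{j\not\in\nu} g_j^+(\xx)$ has a vanishing factor, or $\nu_0 \subseteq \nu$, in which case adjoining $|\nu\setminus\nu_0|$ extra columns cannot raise the rank past $m+|\nu|-1$, so $\Theta_i^\nu(\xx)=0$; either way $\sigma_{\nu,i}(\xx)=0$. Conversely, for $\xx \in \Vc(\Gamma_{sing})$ and $\nu_0 = \nu(\xx)$, the product $\prod_{j\not\in\nu_0} g_j^+(\xx)$ is nonzero, so $\sigma_{\nu_0,i}(\xx)=0$ forces $\Theta_i^{\nu_0}(\xx)=0$ for all $i$, hence $\xx \in W_{\nu_0}$.

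The step I expect to be the main obstacle is the same one that appears in the proof of Lemma \ref{lem:kktproj}: rearranging an arbitrary linear dependency of the gradients of the active constraints so that the set $\gamma$ of $g^0$-indices it uses has cardinality exactly $m$. This is controlled by the choice of $m$ as the maximum rank of $[\nabla g_1^0,\ldots,\nabla g_{n_1}^0]$ over $S$, and the argument carries over verbatim from that lemma. Apart from this bookkeeping, the proof is a direct translation, so no new ideas are required.
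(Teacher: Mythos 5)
Your reconstruction is indeed what the paper's one-line proof (``by similar arguments,'' pointing to Lemma~\ref{lem:kktproj}) has in mind, and both the identity $\cup_\nu W_\nu=\Vc(\Gamma_{sing})$ and the inclusion $\cup_\nu W_\nu\subseteq V^{\xx}_{sing}$ are established correctly in your sketch.

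The inclusion $V^{\xx}_{sing}\subseteq\cup_\nu W_\nu$, however, has a genuine gap, and it is the same gap that the ``conversely'' step of the paper's proof of Lemma~\ref{lem:kktproj} glosses over. From $\yy'=(\xx,0,\uu,\vv)\in V_{FJ}^{\gamma}\cap\Vc(u_0)$ you obtain a nontrivial relation among the $m+|\nu|$ columns of $B_\nu(\xx)$ indexed by $\gamma\cup\nu$, and you infer $\rank (B_\nu(\xx))\le m+|\nu|-1$. But $B_\nu$ has $n_1+|\nu|$ columns; when $m<n_1$ the relation tells you nothing about the remaining $n_1-m$ columns $\nabla g_i^0(\xx)$ with $i\notin\gamma$, and a linear dependence among \emph{some} $m+|\nu|$ columns of an $n\times(n_1+|\nu|)$ matrix does not bound the rank of the full matrix by $m+|\nu|-1$. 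Concretely, take $n=3$, $\gb^{0}=\{z(1+x^2),\,z(1+y^2)\}$, $\gb^{+}=\emptyset$: over $S=\{z=0\}\subset\R^3$ the two gradients are always parallel, so $m=1$; yet $\pi^{\xx}(V_{sing})$ is the union of the four complex lines $\{z=0,x=\pm i\}\cup\{z=0,y=\pm i\}$ (one of the two gradients vanishes), whereas $W_\emptyset$ is the set of four points $\{z=0,x=\pm i,y=\pm i\}$ (both gradients vanish), so the claimed equality fails. The root cause is that $m$ is a maximum of $\rank[\nabla g_1^0,\ldots,\nabla g_{n_1}^0]$ over the \emph{real} set $S$, which does not control this rank at complex points of $\Vc(\gb^{0})$. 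Closing the argument would require an additional hypothesis (that this rank stays $\le m$ on all of $\Vc(\gb^{0})\subset\C^n$, or $m=n_1$), or one must settle for the one-sided inclusion $\cup_\nu W_\nu\subseteq V^{\xx}_{sing}$; the bookkeeping you flag as the main obstacle does not suffice to repair this step.
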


\subsection{The minimizer variety}
By the decomposition \eqref{dec:FJ} and Proposition \ref{prop:FJ}, we know
that the minimizer points of $f$ on $S$ are in 
\begin{equation}\label{eq:decomp}
S_{FJ} = S_{KKT} \cup  S_{sing} 
\end{equation}
where 
$S_{FJ}= \pi^{\xx} (V_{FJ}^{\R})\cap S= \pi^{\xx}(V_{FJ}^{\R})\cap \Sc^{+} (\gb)$, 
$S_{KKT}= \pi^{\xx} (V_{KKT}^{\R})\cap S= \pi^{\xx}(V_{KKT}^{\R})\cap \Sc^{+} (\gb)$, 
$S_{sing}= \pi^{\xx} (V_{sing}^{\R})\cap S=
\pi^{\xx}(V_{sing}^{\R})\cap \Sc^{+} (\gb)$.
Therefore, we can decompose the initial optimization problem
\eqref{problem1} into two
subproblems:
\begin{enumerate}
 \item find the infimum of $f$ on $S_{KKT}$;
 \item find the infimum of $f$ on $S_{sing}$;
\end{enumerate}
and take the least of these two infima. Since the second problem is of
the same type as \eqref{problem1} but with the additional constraints
$\sigma_{\nu,i}=0$ described in \eqref{eq:sing}, we analyse only the first
subproblem. The approach developed for this first sub-problem is
applied recursively to the second subproblem, in order to obtain the
solution of Problem \eqref{problem1}.

\begin{defn} We define the KKT-minimizer set and ideal of $f$ on $S$ as:
\begin{eqnarray*}
 S_{min} &=& \{\xx^{*}\in S_{KKT}\ \mathrm{s.t.}\ \forall \xx \in S_{KKT}, f
 (\xx^{*}) \le f (\xx) \}\\
 I_{min} &=& \Ic (S_{min}) \subset \R[\xx].
\end{eqnarray*}
\end{defn}
A point $\xx^*$ in $S_{min}$ 
is called a KKT-minimizer. 
Notice that $I_{KKT}\subset I_{min}$ and that $I_{min}$ is a real radical ideal.

We have $I_{min} \neq (1)$, if and only if, the KKT-minimum $f^{*}$ is reached in $S_{KKT}$.

If $n_{1}= n_{2}=0$, $I_{min} $ is the vanishing ideal of the {\em
 critical points} $\xx^{*}$ of $f$ (satisfying $\nabla f (\xx^{*})=0$) where $f (\xx^{*})$ reaches its minimal critical value.

\begin{remark}\label{rem:f=0}
If we take $f=0$ in the minimization problem (\ref{problem1}),
then all the points of $S$ are KKT-minimizers and $I_{min}= \Ic
(S) = \sqrt[\gb^{+}]{\gb^{0}}$.
Moreover, $I_{KKT}\cap \R[\xx]= (g_{1}^{0}, \ldots, g_{n_{1}}^{0})=(\gb^{0})$ since $F_{1}, \ldots, F_{n},
v_{1} g_{1}^{+}, \ldots, v_{n_{2}} g_{n_{2}}^{+}$ are homogeneous of degree 1
in the variables $\uu, \vv$.
\end{remark}

\section{Representation of positive polynomials}
In this section, we analyse the decomposition of
polynomials as sum of squares modulo the gradient ideal. 
Hereafter, $J_{grad}$ is an ideal of $\R[\zz]$
such that $\Vc (J_{grad}) = V_{grad}$ and $C$ is a set
of constraints in $\R[\xx]$ such that $\Sc^{+} (C)=\Sc^{+} (\gb)$.

The first steps consists in decomposing $V_{{grad}}$ in
components on which $f$ has a constant value. We recall here a result,
which also appears (with slightly different
hypotheses) in \cite[Lemma 3.3]{NDS}\footnote{In its proof, the Mean
  Value Theorem is applied for a complex valued function, which is not
valid. We correct the problem in the proof of Lemma \ref{cte}.}.

\begin{lem}\label{cte}
Let $f\in \R[\xx]$ and let $V$ be an irreducible subvariety contained in $\Vc^{\C}(\nabla f)$. Then $f(x)$ is constant on $V$.
\end{lem}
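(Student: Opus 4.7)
The plan is to circumvent the Mean Value Theorem (which is the flaw in the earlier argument) by passing to the smooth locus of $V$ and invoking the complex-analytic criterion that a holomorphic function with identically vanishing differential on a connected complex manifold is constant.

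First, I would exploit that $V$ is irreducible over $\C$: its singular locus $V_{\mathrm{sing}}$ is then a proper algebraic subset, and the smooth locus $V_{\mathrm{reg}} := V \setminus V_{\mathrm{sing}}$ is a non-empty connected complex manifold which is dense in $V$ in both the Zariski and Euclidean topologies. The connectedness of $V_{\mathrm{reg}}$ is the essential topological input; it is what substitutes for the (invalid) path-connectedness / MVT step of the original argument.

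Second, at any smooth point $p \in V_{\mathrm{reg}}$, the hypothesis $V \subseteq \Vc^{\C}(\nabla f)$ gives $\partial f/\partial x_i(p) = 0$ for every $i$. Hence $df_p = \sum_i (\partial_i f)(p)\,dx_i$ is the zero functional on $T_p\C^n$, and a fortiori its restriction to $T_pV$ vanishes. Thus the holomorphic function $f|_{V_{\mathrm{reg}}}$ has identically vanishing intrinsic differential. I would then apply the correct holomorphic replacement for the MVT: in any local holomorphic chart on $V_{\mathrm{reg}}$, all first-order partial derivatives of $f|_{V_{\mathrm{reg}}}$ vanish, so by the one-variable holomorphic fundamental theorem of calculus (applied coordinate by coordinate) $f|_{V_{\mathrm{reg}}}$ is locally constant. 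Connectedness of $V_{\mathrm{reg}}$ promotes this to global constancy on $V_{\mathrm{reg}}$, and continuity of the polynomial $f$ together with density of $V_{\mathrm{reg}}$ in $V$ yields that $f$ is constant on all of $V$.

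The main obstacle is precisely the one flagged in the footnote: one cannot naively integrate $\nabla f \cdot \gamma'(t)$ along a real-parametrized path $\gamma$ into $V$, because $V$ sits in $\C^n$ and such paths need not admit a holomorphic primitive to which a one-variable FTC applies. Working instead with the intrinsic complex structure of $V_{\mathrm{reg}}$ and with local vanishing of the differential sidesteps this cleanly; the only non-trivial ingredient one must import from algebraic geometry is that an irreducible complex variety has connected smooth locus.
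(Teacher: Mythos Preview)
Your proof is correct but takes a different route from the paper's. The paper does not pass to the smooth locus; instead it uses that an irreducible complex variety is piecewise smoothly path-connected in the Euclidean topology (citing Shafarevich), takes a smooth path $\varphi\colon[0,1]\to V$ between any two given points, and applies the \emph{real} Mean Value Theorem separately to $t\mapsto\re(f(\varphi(t)))$ and $t\mapsto\im(f(\varphi(t)))$. Since $(f\circ\varphi)'(t)=\nabla f(\varphi(t))\cdot\varphi'(t)$ vanishes identically by the holomorphic chain rule, both real-valued derivatives are zero and $f(\varphi(0))=f(\varphi(1))$. Your approach via the connected complex manifold $V_{\mathrm{reg}}$ and vanishing of the intrinsic holomorphic differential is arguably cleaner from a complex-geometric standpoint and sidesteps any discussion of paths through singular points; the paper's argument, on the other hand, stays within one-variable real calculus once path-connectedness is granted. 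Both ultimately rest on the same non-trivial topological input (irreducible over $\C$ $\Rightarrow$ classically connected). One minor correction to your framing: the defect in the original argument referenced by the footnote is not that one ``cannot integrate $\nabla f\cdot\gamma'$ along a real-parametrized path''---the fundamental theorem of calculus applies perfectly well to the $\C$-valued function $f\circ\gamma$ and would already yield the conclusion---but rather that the Mean Value Theorem (existence of a single $t_1$ with $(f\circ\gamma)'(t_1)$ equal to the secant) fails for complex-valued functions; the paper's fix is precisely to split into real and imaginary parts.
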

\begin{proof}
 If $V$ is irreducible in the Zariski topology induced from
 $\C[\xx]$, then it is connected in the strong topology on $\C^n$
and even piecewise smoothly path-connected \cite{Shafa}.
 Let $x, \ y$ be two arbitrary points of $V$. There exists a piecewise
 smooth path $\varphi(t) \ (0\le t\le 1)$ lying inside $V$ such that
 $x=\varphi(0)$ and $y=\varphi(1)$. Without loss of generality, we can
 assume that $\varphi$ is smooth between $x$ and $y$ in order to prove
 that $f (x)= f (y)$.
 By the Mean Value Theorem, it holds that for some $t_1\in (0,1)$
 \begin{equation*}
  \re(f(y)-f(x))= \re(f(\varphi (t)))' (t_{1}) = \re((\nabla f(\varphi(t_1))* \varphi'(t_1)))=0
 \end{equation*}
 since  $\nabla f$ vanishes on $V$. Then $\re(f(y))=\re(f(x))$. We
 have the same result for the imaginary part: for some $t_2\in (0,1)$
 \begin{equation*}
  \im(f(y)) - \im (f(x))=\im(f(\varphi(t)))' (t_2)=\im((\nabla
  f(\varphi(t_2)) * \varphi'(t_2)))=0
 \end{equation*}
 since  $\nabla f$ vanishes on $V$. Then $\im(f(y))=\im(f(x))$. We conclude that $f(y)=f(x)$ and hence $f$ is constant on $V$.
\end{proof}

\begin{lem}\label{pis}
The ideal $J_{grad} $ can be decomposed as 
$J_{grad} = J_0 \cap J_1 \cap \cdots \cap J_s$ with $V_{i}=\Vc
(J_{i})$ and $W_{i}= \overline{\pi^{\xx} ( V_{i})}$ where $\pi^{\xx} ( V_{i})$ is the projection of $V_{i}$ 
on $\C^n$ such that 
\begin{itemize}
 \item $f(V_j)=f_j\in \C$, $f_i \neq f_j$ if $i\neq j$, 
 \item $W_{i}^{\R} \cap \Sc^{+}(C) \neq \emptyset$ for $i=0, \ldots, r$,
 \item $W_{i}^{\R} \cap \Sc^{+}(C) =\emptyset$ for $i=r+1, \ldots, s$,
 \item $f_{0}< \cdots < f_{r}$.
\end{itemize}
\end{lem}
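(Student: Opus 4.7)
The plan is to combine a primary decomposition of $J_{grad}$ (over $\C[\zz]$) with Lemmas \ref{fegalF} and \ref{cte}, and then to group the resulting primary components first by the value of $f$ and afterwards by whether their $\xx$-projection meets the real constraint set $\Sc^{+}(C)$.

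Concretely, I would take a minimal primary decomposition $J_{grad}\,\C[\zz] = \bigcap_{k=1}^{N} Q_k$, with associated primes $P_k=\sqrt{Q_k}$ giving the irreducible components $V_k=\Vc(Q_k)\subset V_{grad}$ of the gradient variety. Each $V_k$ is an irreducible subvariety of $\Vc^{\C}(\nabla F)=V_{grad}$, so Lemma \ref{cte} applied to $F$ yields a constant $c_k\in\C$ with $F|_{V_k}\equiv c_k$, and Lemma \ref{fegalF} then gives $f|_{V_k}\equiv c_k$. Let $\{f_0,\ldots,f_s\}$ enumerate the distinct values occurring among the $c_k$, and for each $j$ set
\begin{equation*}
 J_j \;=\; \bigcap_{c_k=f_j} Q_k, \qquad V_j \;=\; \Vc(J_j) \;=\; \bigcup_{c_k=f_j} V_k .
\end{equation*}
Then $J_{grad}=J_0\cap\cdots\cap J_s$, $\Vc(J_j)=V_j$, and $f\equiv f_j$ on $V_j$ by construction.

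Finally, let $W_j=\overline{\pi^{\xx}(V_j)}$. Since $f$ depends only on $\xx$ and equals $f_j$ on $V_j$, it equals $f_j$ on $\pi^{\xx}(V_j)$; and because $\{f=f_j\}$ is Zariski closed, it also equals $f_j$ on the closure $W_j$. In particular, any real point $\xx\in W_j^{\R}$ satisfies $f_j=f(\xx)\in\R$, so whenever $W_j^{\R}\cap\Sc^{+}(C)\neq\emptyset$ the value $f_j$ is automatically real. Relabel the indices so that $W_i^{\R}\cap\Sc^{+}(C)\neq\emptyset$ exactly for $0\le i\le r$, and among those reorder to obtain $f_0<\cdots<f_r$; the remaining indices $r+1,\ldots,s$ then satisfy $W_i^{\R}\cap\Sc^{+}(C)=\emptyset$, as required. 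The only mildly delicate point is the passage from the statement of Lemma \ref{cte} (given for $\Vc^{\C}(\nabla f)$) to the present setting $V_{grad}=\Vc(\nabla F)$: one applies it to the Lagrangian $F$ and transfers the conclusion back to $f$ via Lemma \ref{fegalF}; the rest is bookkeeping on the primary decomposition together with the Zariski-closedness of $\{f=f_j\}$.
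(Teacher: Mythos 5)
Your proof follows essentially the same route as the paper's: a minimal primary decomposition of $J_{grad}$, Lemma \ref{cte} applied to $F$ combined with Lemma \ref{fegalF} to get constancy of $f$ on each irreducible component, grouping the primary components by the value of $f$, and then separating and reordering according to whether $W_i^{\R}\cap\Sc^{+}(C)$ is nonempty. Your justification that $f_j\in\R$ when $W_j^{\R}\cap\Sc^{+}(C)\neq\emptyset$ (via Zariski-closedness of $\{f=f_j\}$) is a slightly cleaner phrasing of the same observation the paper makes, so this is the same argument, not a different one.
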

\begin{proof}
Consider a minimal primary decomposition of $J_{grad }$:
$$
J_{grad}= Q_{0} \cap \cdots \cap Q_{s'},
$$ 
where $Q_{i}$ is a primary component, and $\Vc (Q_{i})$ is an
irreducible variety in $\C^{n+n_1+2n_2}$ included in $V_{grad} $.
By Lemma \ref{cte}, $F$ is constant on $\Vc (Q_{i})$.
By Lemma \ref{fegalF}, it coincides with $f$ on each variety $\Vc (Q_{i})$.
We group the primary components $Q_{i}$ according to the values
$f_{0}, \ldots, f_{s}$ of $f$ on these components, into $J_{0},
\ldots, J_{s}$ so that $f (\Vc (J_{j}))=f_{i}$ with $f_{i}\neq f_{j}$ if
$i\neq j$.

We can number them so that $\overline{\pi^{\xx} (V_i)}^{\R}\cap \Sc^{+}(C)$ is
empty for $i=r+1, \ldots, s$ and contains a real point $\xx_{i}$ for $i=0,\ldots,
r$. Notice that such a point $\xx_{i}$ is in $\Sc$, since it satisfies
$g^{0} (\xx_{i})=0 \ \forall g^{0} \in C^{0}$ and $g^{+} (\xx_{i})\geq 0 \ \forall g^{+} \in C^{+}$.
As it is the limit of the projection of points in $\Vc (J_{i})$ on which $f$ is constant, we
have $f_{i}= f (\xx_{i})\in \R$ for $i=0, \ldots, r$.
We can then order $J_{0}, \ldots ,J_{r}$ so that $f_{0}<\cdots< f_{r}$.
\end{proof}

\begin{remark} If the minimum of $f$ on $S$  is reached at a
  KKT-point, then we have $f_{0}=\min_{\xx \in S} f (\xx)$.
\end{remark}

\begin{remark}\label{rpis}
If $V_{grad}^{\R}=\emptyset$, then for all $i=0, \ldots, s$, $W_{i}^{\R} \cap \Sc^{+}(C) =\emptyset$
and by convention, we take $r=-1$.
\end{remark}

\begin{lem}\label{lem:lagrange}
There exist $p_{0}, \ldots, p_{s} \in \C[\xx]$ such that
\begin{itemize}
 \item $\sum_{i=0}^{s} p_{i} = 1 \mod J_{grad} $,
 \item $p_{i} \in \bigcap_{j\neq i} J_{j}$,
 \item $p_{i}\in \R[\xx]$ for $i=0, \ldots, r$.
\end{itemize}
\end{lem}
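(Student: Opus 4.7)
The plan is to carry out a Chinese Remainder / Hermite interpolation argument in one variable $t$, and then substitute $t=f$.

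First, since $f$ is constant equal to $f_i$ on $V_i=\Vc(J_i)$ by Lemma \ref{cte} and Lemma \ref{fegalF}, the polynomial $f-f_i$ lies in $\sqrt{J_i}$, so there exists an integer $m_i\ge 1$ with $(f-f_i)^{m_i}\in J_i$. Because the complex numbers $f_0,\ldots,f_s$ are pairwise distinct, the univariate polynomials $(t-f_j)^{m_j}\in\C[t]$ are pairwise coprime. By Hermite interpolation (the CRT in $\C[t]$), there exist unique $L_0,\ldots,L_s\in\C[t]$ of degree strictly less than $m:=\sum_j m_j$ satisfying
\begin{equation*}
L_i(t)\equiv 1\ \bmod\ (t-f_i)^{m_i}\quad\text{and}\quad L_i(t)\equiv 0\ \bmod\ (t-f_j)^{m_j}\ \text{for}\ j\ne i.
\end{equation*}
I would then define $p_i:=L_i(f)\in\C[\xx]$.

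The first two properties of the lemma follow by routine substitution. The second congruence evaluated at $t=f$ gives $(f-f_j)^{m_j}\mid L_i(f)$, so $p_i\in J_j$ for all $j\ne i$, which is the second bulleted property. The first congruence gives $L_i(f)-1\in((f-f_i)^{m_i})\subset J_i$, i.e.\ $p_i\equiv 1\bmod J_i$. Combining, $\sum_j p_j\equiv p_i\equiv 1\bmod J_i$ for every $i$, hence $\sum_j p_j-1\in\bigcap_i J_i=J_{grad}$, which is the first property.

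The delicate point is the reality statement for $i\le r$. Here I would exploit that $J_{grad}\subset\R[\zz]$, so complex conjugation of coefficients fixes $J_{grad}$ and therefore permutes its primary components from Lemma \ref{pis}: there exists a permutation $\sigma$ of $\{0,\ldots,s\}$ with $\overline{J_j}=J_{\sigma(j)}$. Since $f\in\R[\zz]$ this forces $f_{\sigma(j)}=\overline{f_j}$, and choosing $m_j$ minimally forces $m_{\sigma(j)}=m_j$. For $j\le r$ the reality of $f_j$ implies $\sigma(j)=j$. Now conjugating the coefficients of $L_i$ produces $\overline{L_i}\in\C[t]$ of the same degree $<m$ that solves the Hermite system permuted by $\sigma$; when $i\le r$ this permuted system coincides with the original, so by uniqueness of the Hermite interpolant of degree $<m$ we get $\overline{L_i}=L_i$, i.e.\ $L_i\in\R[t]$, and therefore $p_i=L_i(f)\in\R[\xx]$.

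The main obstacle I anticipate is precisely this last step: it hinges on the fact that complex conjugation genuinely permutes the components $J_0,\ldots,J_s$ of Lemma \ref{pis}. This requires either appealing to the uniqueness (up to reordering and to isomorphism) of a minimal primary decomposition of the real ideal $J_{grad}$, or paying careful attention in Lemma \ref{pis} to how $\C$-primary components with conjugate values of $f$ are grouped, so that the resulting $J_j$'s form a set stable under conjugation in the sense used above.
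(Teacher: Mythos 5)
Your proof is correct and shares the paper's central device — interpolate in a single variable $t$ at the critical values $f_0,\ldots,f_s$ and then substitute $t=f$ — but the technical route differs in a way worth noting. The paper takes the \emph{Lagrange} interpolants $L_i$, sets $q_i=L_i(f)$, invokes the Nullstellensatz to get $q_i^N\in\bigcap_{j\neq i}J_j$, and then applies a binomial expansion trick to rewrite $1\equiv\big(\sum_j q_j^N\big)^{N'}\pmod{J_{grad}}$ as a sum of terms $p_i=1-(1-q_i^N)^{N'}$, each divisible by $q_i^N$. You instead build the multiplicities $m_j$ into the interpolation problem from the outset: Hermite interpolation via the CRT in $\C[t]$ directly yields $L_i$ with $(t-f_j)^{m_j}\mid L_i$ for $j\neq i$ and $(t-f_i)^{m_i}\mid L_i-1$, so $p_i:=L_i(f)$ satisfies the first two bullets at once, with no powering or expansion. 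This is cleaner and arguably more transparent.

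The one place your route genuinely needs more care than the paper's is, as you anticipate, the reality of $p_i$ for $i\le r$. The paper's Lagrange interpolant only depends on the \emph{node set} $\{f_0,\ldots,f_s\}$, whose conjugation-stability follows immediately from $J_{grad}$ and $f$ being real (conjugation permutes $\{f_0,\ldots,f_s\}$ and fixes the real values $f_0,\ldots,f_r$, hence permutes the rest). Your Hermite interpolant also depends on the multiplicities $m_j$, so you must additionally argue $m_j=m_{\sigma(j)}$ under the conjugation permutation $\sigma$, which leads you to require that the decomposition of Lemma \ref{pis} be conjugation-stable; this is true and standard (a minimal primary decomposition of a real ideal can be chosen stable under conjugation), but it is an extra input. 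You could avoid it entirely by taking a single uniform exponent $N$ with $(f-f_j)^N\in J_j$ for all $j$ (which exists by Nullstellensatz, applied once across the finitely many components): then the Hermite data is determined by the conjugation-stable node set alone, exactly as in the paper's Lagrange argument, and the rest of your proof goes through unchanged.
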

\begin{proof}
Let $(L_i)_{i=0, \ldots, s}$ be the univariate Lagrange interpolation polynomials
at the values $f_{0}, \ldots, f_{s} \in \C$
and let $q_{i} (\xx) = L_{i} (f (\xx))$.

The polynomials $q_{i}$ are constructed so that
\begin{itemize}
 \item $q_{i} (V_{j}) = 0$ if $j\neq i$,
 \item $q_{i} (V_{i}) = 1$,
\end{itemize}
where $V_{i}= \Vc (J_{i})$. 
As the set  $\{f_{r+1},\ldots, f_{s}\}$ is stable by conjugation
and $f_{0}, \ldots, f_{r}\in \R$,  by construction of the Lagrange
interpolation polynomials we deduce that $q_{0}, \ldots, q_{r} \in \R[\xx]$.

By Hilbert's Nullstellensatz, there exists $N\in \N$ such that
$q_{i}^{N} \in \bigcap_{j\neq i} J_{j}$. 
As $\sum_{j=0}^{s} q_{j}^{N}=1$ on $V_{grad} $ and $q_{i}^{N}
q_{j}^{N} =0  \mod  \bigcap_{i} J_{i}= J_{grad }$ for $i\neq j$, we deduce that there
exists $N'\in \N$ such that
\begin{eqnarray*}
0 & = & ( 1-\sum_{j=0}^{s} q_{j}^{N})^{N'} \mod J_{grad} \\
  & = & 1 - \sum_{j=0}^{s} (1-(1-q_{j}^{N})^{N'}) \mod J_{grad} .\\
\end{eqnarray*}
As the polynomial $p_{i}= 1-(1-q_{j}^{N})^{N'}\in \C[\xx]$ is divisible by $q_{j}^{N}$,
it belongs to $\bigcap_{j\neq i} J_{j}$.
Since $q_{j}\in \R[\xx]$ for $j=0, \ldots, r$, we have $p_{j}\in \R[\xx]$
for $j=0, \ldots, r$, which ends the proof of this lemma.
\end{proof}

\begin{lem}\label{lem:preorder}
$-1 \in \Pc^{+}(C) + (\bigcap_{i>r} J_{i}^{\xx})$. 
\end{lem}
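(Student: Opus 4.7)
The strategy is to recognize this as a direct application of the Positivstellensatz (Theorem~\ref{null}(iii)) once we show that the basic closed semi-algebraic set cut out in $\R^{n}$ by the equations of $I := \bigcap_{i>r} J_{i}^{\xx}$ together with the inequalities of $C^{+}$ is empty. Setting up this equivalence is where the content of Lemma~\ref{pis} gets used.

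First, I would identify the complex variety of $I$. For each $i$, any $p \in J_{i}^{\xx} \subset J_{i}$ vanishes on $V_{i}$ and, being independent of $(\uu,\vv,\s)$, vanishes on $\pi^{\xx}(V_{i})$ and hence on $W_{i} = \overline{\pi^{\xx}(V_{i})}$. Consequently $\Vc(J_{i}^{\xx}) \supseteq W_{i}$, and the reverse inclusion (up to radical) follows from standard elimination theory: modulo taking the extension to $\C[\xx]$, $J_{i}^{\xx}\cdot\C[\xx]$ coincides with the elimination ideal $J_{i}\cdot\C[\zz] \cap \C[\xx]$ defining $W_{i}$. Grouping conjugate primary components from the proof of Lemma~\ref{pis} guarantees that the relevant intersections are defined over $\R$, so this identification carries through. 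Intersecting over $i > r$ and taking real points gives
\[
\Vc^{\R}(I) \;\subseteq\; \bigcup_{i>r} W_{i}^{\R}.
\]

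Combining this with the hypothesis $W_{i}^{\R} \cap \Sc^{+}(C) = \emptyset$ for $i=r+1,\ldots,s$ from Lemma~\ref{pis}, I obtain
\[
\Vc^{\R}(I) \cap \Sc^{+}(C) \;\subseteq\; \bigcup_{i>r}\bigl(W_{i}^{\R}\cap \Sc^{+}(C)\bigr) \;=\; \emptyset.
\]
In other words, the basic closed semi-algebraic set $\{\xx \in \R^{n} : p(\xx)=0\ \forall p \in I,\ c^{+}(\xx)\ge 0\ \forall c^{+} \in C^{+}\}$ is empty. The Positivstellensatz (Theorem~\ref{null}(iii), in its empty-set form) then yields a decomposition $-1 = q + h$ with $q \in \Pc^{+}(C)$ and $h \in I$, which is exactly the claim $-1 \in \Pc^{+}(C) + \bigl(\bigcap_{i>r} J_{i}^{\xx}\bigr)$.

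The main obstacle is the first step: the primary components $J_{i}$ produced in Lemma~\ref{pis} are ideals with irreducible \emph{complex} varieties and hence individually need not be defined over $\R$. One must be careful to pair complex-conjugate components (those whose associated critical value $f_{i} \notin \R$ come in conjugate pairs) so that the intersection $I = \bigcap_{i>r} J_{i}^{\xx}$ is a genuine real ideal describing, via elimination, the real-point union $\bigcup_{i>r} W_{i}^{\R}$; once this bookkeeping is handled, Steps~2 and~3 are routine applications of the hypothesis and the Positivstellensatz.
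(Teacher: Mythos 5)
Your proof is correct and follows essentially the same route as the paper: identify $\Vc^{\R}(\bigcap_{i>r}J_i^{\xx}) \cap \Sc^{+}(C)$ with $\bigcup_{i>r}W_i^{\R}\cap \Sc^{+}(C)$, note it is empty by Lemma~\ref{pis}, and invoke the Positivstellensatz. The paper simply asserts the set identification in one line where you unpack it via elimination theory; the conjugate-component bookkeeping you flag is a real subtlety that the paper passes over silently, and your observation that conjugate pairs of $J_i$ with non-real $f_i$ necessarily both land in the index range $i>r$ is exactly what makes the identification legitimate.
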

\begin{proof}
As $\bigcup_{i>r}\overline{\pi^{\xx} (V_{i})}^{\R} \cap \Sc^{+}(C) = \Vc^{\R}
(\bigcap_{i>r} J_{i}\cap \R[\xx])\cap \Sc^{+}(C) =  \Vc^{\R}
(\bigcap_{i>r} J_{i}^{\xx})\cap \Sc^{+}(C) = \emptyset$,
we have 
$\Ic (\Vc^{\R}(\bigcap_{i>r} J_{i}^{\xx})\cap \Sc^{+}(C))= \R[\xx] \ni 1$
and  by the Positivstellensatz (Theorem \ref{null} (iii)), 
$$ 
-1 \in \Pc^{+}(C) + (\bigcap_{i>r} J_{i}^{\xx}).
$$
\end{proof}

\begin{cor}\label{cor:Sminempty}
If $S_{min}=\emptyset$, then $-1\in \Pc^{+}(C)  + J_{grad}^{\xx}.$
\end{cor}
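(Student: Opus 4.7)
The plan is to apply Lemma \ref{lem:preorder} in the extreme case $r=-1$, and to show that the hypothesis $S_{min}=\emptyset$ forces exactly this situation in the decomposition of Lemma \ref{pis}.

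First, I would argue the contrapositive for the key step: if $r\ge 0$ in Lemma \ref{pis}, then $S_{min}\neq\emptyset$. Indeed, for $i=0,\ldots,r$ there is a real point $\xx_i\in W_i^{\R}\cap\Sc^{+}(C)$, and by the proof of Lemma \ref{pis} such a point satisfies $g^{0}(\xx_i)=0$ and $g^{+}(\xx_i)\ge 0$, hence lies in $S$; moreover $f(\xx_i)=f_i\in\R$ because $f$ is continuous and constant equal to $f_i$ on $V_i$ (Lemma \ref{cte}), therefore constant on $\overline{\pi^{\xx}(V_i)}\supset W_i^{\R}$. Since $W_i\subset V_{grad}^{\xx}$, we have $\xx_i\in V_{grad}^{\xx,\R}\cap\Sc^{+}(\gb)=S_{KKT}$ by Lemma \ref{rem:proj}. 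As $S_{KKT}\subset\bigcup_{i=0}^{r}(W_i^{\R}\cap\Sc^{+}(C))$ and on each piece $f$ takes the single real value $f_i$, the function $f$ assumes only the finitely many real values $f_0<\cdots<f_r$ on $S_{KKT}$. Thus its infimum $f_0$ is attained on $W_0^{\R}\cap\Sc^{+}(C)\subset S_{KKT}$, which shows $S_{min}\neq\emptyset$.

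Consequently $S_{min}=\emptyset$ forces $r=-1$, i.e.\ every irreducible component of $V_{grad}$ projects to a $\xx$-variety whose real points miss $\Sc^{+}(C)$ (this is precisely the convention of Remark \ref{rpis}). Now I invoke Lemma \ref{lem:preorder} with $r=-1$: it gives
\[
-1\ \in\ \Pc^{+}(C) + \Big(\bigcap_{i>-1} J_i^{\xx}\Big) = \Pc^{+}(C) + \Big(\bigcap_{i=0}^{s} J_i^{\xx}\Big).
\]
Finally, since the decomposition of Lemma \ref{pis} gives $J_{grad}=\bigcap_{i=0}^{s}J_i$ and intersection commutes with the contraction to $\R[\xx]$, we have $\bigcap_{i=0}^{s}J_i^{\xx}=\bigl(\bigcap_{i=0}^{s}J_i\bigr)\cap\R[\xx]=J_{grad}^{\xx}$, and the desired conclusion $-1\in\Pc^{+}(C)+J_{grad}^{\xx}$ follows.

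The only non-routine point is the observation that although $f$ may well have its global infimum on $S$ not attained, its infimum on $S_{KKT}$ is automatically attained, because $f$ is forced to be constant on each irreducible piece of $V_{grad}$ by Lemma \ref{cte}, so only finitely many real values occur on $S_{KKT}$; this is what rules out the pathological case and reduces matters to $r=-1$ in Lemma \ref{pis}.
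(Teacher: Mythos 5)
Your proof is correct and takes essentially the same route as the paper: both reduce to Lemma \ref{lem:preorder} in the case $r=-1$ and then use $\bigcap_{i=0}^{s}J_i^{\xx}=J_{grad}^{\xx}$. The paper's proof simply asserts that $S_{min}=\emptyset$ forces $r=-1$; you supply the (correct) justification — that $f$ takes only the finitely many real values $f_0,\ldots,f_r$ on $S_{KKT}$, so the infimum on $S_{KKT}$ is automatically attained whenever $r\ge 0$ — which the paper leaves implicit.
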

\begin{proof}
If $S_{min}=\emptyset$, then $f$ has no real KKT critical value on $S
(C)$ and
$r=-1$. Lemma \ref{lem:preorder} implies that  
$-1 \in \Pc^{+}(C)  + (\bigcap_{i=0}^s J_{i}^{\xx})=\Pc^{+}(C)  +
J_{grad}^{\xx}$. 
\end{proof}
In this case, 
 $\forall p\in \R[\xx]$, 
$p = \frac{1}{4} ((p+1)^{2} - (p-1)^{2})  \in \Pc^{+}(C)   +
J_{grad}^{\xx}$. If $C^{0}$ is chosen such that
$V(C^{0}) \subset V_{grad}^{\xx}$ then 
$S_{min}=\emptyset$ if and only if 
$-1 \in \Pc (C)$. 

We recall another useful result on the representation of positive
polynomials (see for instance \cite{DNP}):
\begin{lem}\label{epsilon}
Let $J \subset \R[\zz]$ and $V= \Vc (J)$ such that  $f (V)= f^*$ with
$f^{*} \in \R^{+}$
. There
exists $t\in \N$, s.t. $\forall \epsilon >0$, $\exists\, q \in \R[\xx]$
with $\deg (q) \leq t$ and $f+\epsilon = q^{2} \mod J$. 
\end{lem}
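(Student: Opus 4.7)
The plan is to construct $q$ explicitly by substituting $h:=f-f^{*}$ into a truncated Taylor expansion of $\sqrt{1+x}$. First I would observe that since $f$ takes the constant value $f^{*}$ on $V=\Vc(J)$, the real polynomial $h=f-f^{*}\in\R[\xx]$ vanishes on the complex variety $V$, so Hilbert's Nullstellensatz yields an integer $N$ --- depending only on $f$ and $J$, and crucially \emph{not} on $\epsilon$ --- such that $h^{N}\in J$. Using the hypothesis $f^{*}\in\R^{+}$, I set $c:=\sqrt{f^{*}+\epsilon}>0$, so that
\[
f+\epsilon \ =\ (f^{*}+\epsilon)+(f-f^{*}) \ =\ c^{2}+h,
\]
and the task reduces to finding $q\in\R[\xx]$, of degree bounded independently of $\epsilon$, satisfying $q^{2}\equiv c^{2}+h\pmod{J}$.

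Next I would introduce the truncated binomial series
\[
P(x)\ :=\ \sum_{k=0}^{N-1}\binom{1/2}{k}x^{k}\ \in\ \Q[x].
\]
Because $(1+x)^{1/2}\cdot(1+x)^{1/2}=1+x$ as formal power series, the polynomial $P(x)^{2}-(1+x)$ lies in the ideal $(x^{N})\subset\Q[x]$. Substituting $x=h/c^{2}$ and multiplying through by $c^{2}$ gives a relation of the form
\[
c^{2}\,P(h/c^{2})^{2}-(c^{2}+h)\ =\ h^{N}\cdot r
\]
for some $r\in\R[\xx]$. Since $h^{N}\in J$, the right-hand side lies in $J$, so the polynomial $q:=c\,P(h/c^{2})\in\R[\xx]$ satisfies $q^{2}\equiv c^{2}+h=f+\epsilon\pmod{J}$, which is the required congruence.

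Finally I would verify the uniform degree bound: by construction $q$ is a polynomial expression of degree at most $N-1$ in $h$, so $\deg(q)\le(N-1)\deg(h)\le(N-1)\deg(f)=:t$, a bound depending only on $f$ and $J$. The same $t$ therefore works for every $\epsilon>0$.

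There is no serious obstacle in this argument; the essential point is to separate the $\epsilon$-dependence, which enters only through the real constant $c=\sqrt{f^{*}+\epsilon}$, from the combinatorial data $(N,\deg f)$ that controls the degree. This separation is precisely what forces the hypothesis $f^{*}\ge 0$: we need $f^{*}+\epsilon>0$ for every $\epsilon>0$ so that $c$ exists as a real number throughout.
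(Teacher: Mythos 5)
Your proof is correct and is essentially the same as the paper's: both reduce to the Nullstellensatz statement $h^{N}\in J$ for $h=f-f^{*}$ (the paper writes the same object as $\frac{f+\epsilon}{f^*+\epsilon}-1$, a scalar multiple of $h$) and then take $q$ to be the degree-$(N-1)$ truncation of the binomial series for $\sqrt{f^{*}+\epsilon}\,(1+h/(f^{*}+\epsilon))^{1/2}$. Your writeup is actually a bit more careful than the paper's in one respect that matters for the statement: you make explicit that the Nullstellensatz exponent $N$ (and hence the degree bound $t=(N-1)\deg f$) is independent of $\epsilon$, because the variety being used is that of $h$, which does not depend on $\epsilon$; the paper leaves this uniformity implicit.
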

\begin{proof}
We know that $\frac{f+\epsilon}{f^*+\epsilon}-1$ vanishes on V. By
Hilbert's Nullstellensatz $(\frac{f+\epsilon}{f^*+\epsilon}-1)^l \in
J$ for some $l\in \N$.
From the binomial theorem, it follows that
\begin{equation*}
 (1+(\frac{f+\epsilon}{f^*+\epsilon}-1))^{1/2} \equiv \sum^{l-1}_i \binom{1/2}{k} (\frac{f+\epsilon}{f^*+\epsilon}-1)^k \stackrel{def}{=} \frac{q}{\sqrt{f^*+\epsilon}} \ mod \ J 
\end{equation*}
Then $f +\epsilon = q^2 \ mod \ J$.
\end{proof}

In particular, if $f^{*}>0$ this lemma implies that $f = (f-\frac{1}{2}f^{*}) + \frac{1}{2}f^{*} = q^{2} \mod J$ for some $q\in \R[\xx]$.

\begin{thm}\label{thm:sos+rad}
Let $C\subset \R[\xx]$ be a set of
constraints such that $\Sc^{+}(C)=\Sc^{+} (\gb)$, let $f \in \R[\xx]$, let $f_{0}< \cdots< f_{r}$ be the real $KKT$
critical values of $f$ on $S$ and let $p_{0}, \ldots, p_{r}$ be the
associated polynomials defined in  Lemma \ref{lem:lagrange}.
\begin{enumerate}
 \item $f -\sum_{i=0}^{r} f_{i}\, p_{i}^{2} \in \Pc^{+}(C) + \sqrt{J_{grad}^{\xx}}$. 
\item If $f\geq 0$ on $S_{KKT}$, then $f\in \Pc^{+}(C) + \sqrt{J_{grad}^{\xx}}$. 
 \item If $f>0$ on $S_{KKT}$, then $f\in \Pc^{+}(C) +
  J_{grad}^{\xx}$.
\end{enumerate} 
\end{thm}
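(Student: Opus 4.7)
The plan is to split $f = fP + f(1-P)$ where $P := \sum_{i \le r} p_i^2 \in \R[\xx]$, and handle the two pieces separately. Starting from $\sum_ip_i\equiv 1$ and $p_ip_j\equiv 0\bmod J_{grad}$ ($i\ne j$) given by Lemma \ref{lem:lagrange}, one obtains $\sum_ip_i^2\equiv1\bmod J_{grad}$; setting $J_{real}^{\xx}:=(\bigcap_{i\le r}J_i)\cap\R[\xx]$ and $J_{complex}^{\xx}:=(\bigcap_{i>r}J_i)\cap\R[\xx]$, I then check $P\in J_{complex}^{\xx}$, $1-P\in J_{real}^{\xx}$, and $J_{grad}^{\xx}=J_{real}^{\xx}\cap J_{complex}^{\xx}$.

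For $fP$, I would use that $(f-f_i)p_i$ vanishes on every component $V_j$ of $V_{grad}$ (on $V_i$ because $f=f_i$, on $V_j$ for $j\ne i$ because $p_i\in J_j$), hence $(f-f_i)p_i^2\in\sqrt{J_{grad}}$; for $i\le r$ the factor $p_i\in\R[\xx]$, so $(f-f_i)p_i^2\in\sqrt{J_{grad}^{\xx}}$. Summing,
\begin{equation}\label{eqfP}
fP\equiv\sum_{i\le r}f_ip_i^2 \bmod \sqrt{J_{grad}^{\xx}}.
\end{equation}
Under the hypothesis $f>0$ on $S_{KKT}$ of (3), each $f_i>0$, and Lemma \ref{epsilon} applied to each $J_i^{\xx}$ supplies $q_i\in\R[\xx]$ with $f\equiv q_i^2\bmod J_i^{\xx}$; then $(f-q_i^2)p_i^2\in J_i^{\xx}\cdot\bigcap_{k\ne i}J_k^{\xx}\subset J_{grad}^{\xx}$, and \eqref{eqfP} improves to $fP\equiv\sum_{i\le r}(q_ip_i)^2\bmod J_{grad}^{\xx}$, which lies in $\Pc^+(C)+J_{grad}^{\xx}$.

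The key step is showing $f(1-P)\in\Pc^+(C)+J_{grad}^{\xx}$. By Lemma \ref{lem:preorder}, write $-1=\sigma+h$ with $\sigma\in\Pc^+(C)$, $h\in J_{complex}^{\xx}$. The polarization
$$
f=\tfrac14\bigl[(f+1)^2+\sigma(f-1)^2\bigr]+\tfrac14\, h(f-1)^2
$$
exhibits every $f\in\R[\xx]$ as $f=\tau_0+k_0$ with $\tau_0\in\Pc^+(C)$ and $k_0\in J_{complex}^{\xx}$. Multiplying by $(1-P)^2$,
$$
f(1-P)^2=\tau_0(1-P)^2+k_0(1-P)^2,
$$
where the first summand is in $\Pc^+(C)$ (closed under multiplication by squares) and the second lies in $J_{complex}^{\xx}\cdot J_{real}^{\xx}\subset J_{grad}^{\xx}$. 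Since $f(1-P)-f(1-P)^2=f(1-P)P$ and $(1-P)P\in J_{real}^{\xx}\cdot J_{complex}^{\xx}\subset J_{grad}^{\xx}$, one concludes
\begin{equation}\label{eqf1P}
f(1-P)\equiv f(1-P)^2\in\Pc^+(C)+J_{grad}^{\xx}.
\end{equation}

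Combining \eqref{eqfP} and \eqref{eqf1P} gives (1): the decomposition $f-\sum_{i\le r}f_ip_i^2=(fP-\sum_{i\le r}f_ip_i^2)+f(1-P)$ lands in $\sqrt{J_{grad}^{\xx}}+\Pc^+(C)+J_{grad}^{\xx}=\Pc^+(C)+\sqrt{J_{grad}^{\xx}}$. Part (2) is immediate since $f\ge 0$ on $S_{KKT}$ forces $f_i\ge 0$ for every $i\le r$, so $\sum_{i\le r}f_ip_i^2\in\Pc^+(C)$. Part (3) follows from the strengthened \eqref{eqfP} together with \eqref{eqf1P}, both living in $\Pc^+(C)+J_{grad}^{\xx}$. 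The subtle step that I expect to require the most care is the equivalence $f(1-P)\equiv f(1-P)^2\bmod J_{grad}^{\xx}$; it rests on the ``real/complex'' decomposition $J_{grad}^{\xx}=J_{real}^{\xx}\cap J_{complex}^{\xx}$ with $P$ and $1-P$ sitting in complementary factors, which in turn uses the pairwise disjointness of the $V_i$ coming from the distinct values $f_i$ of Lemma \ref{pis}.
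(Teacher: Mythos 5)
Your proof is correct and shares the overall skeleton of the paper's argument: decompose $f$ via the family $\{p_i\}$ of Lemma~\ref{lem:lagrange}, use the vanishing of $(f-f_i)p_i^2$ on $V_{grad}$ to replace $f\,p_i^2$ by $f_i\,p_i^2$ modulo $\sqrt{J_{grad}^{\xx}}$ for $i\le r$, invoke Lemma~\ref{lem:preorder} (via the Positivstellensatz) on the remaining part, and apply Lemma~\ref{epsilon} componentwise to upgrade (2) to (3). Where you genuinely diverge is in how the ``non-real'' contribution is certified. The paper reduces $f\equiv\sum_{i=0}^s f\,p_i^2\bmod J_{grad}$ and asserts in \eqref{eq:dec} that $\sum_{i>r}f\,p_i^2\in\Pc^{+}(C)+J_{grad}^{\xx}$ by multiplying the decomposition $f\in\Pc^{+}(C)+\cap_{j>r}J_j^{\xx}$ by $\sum_{i>r}p_i^2$; to keep the preordering summand inside $\Pc^{+}(C)$ one must further replace $\sum_{i>r}p_i^2$ by the square of the \emph{real} polynomial $\sum_{i>r}p_i$ modulo $J_{grad}^{\xx}$, which relies on the conjugation-stability of $\{p_i\}_{i>r}$ (a consequence of the construction in Lemma~\ref{lem:lagrange} that the paper does not spell out). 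You instead isolate $f(1-P)$ with $P=\sum_{i\le r}p_i^2\in\R[\xx]$, multiply the certificate for $f$ by the manifestly real square $(1-P)^2$, and pay for this with the clean observation $(1-P)-(1-P)^2=(1-P)P\in J_{real}^{\xx}\cdot J_{complex}^{\xx}\subset J_{grad}^{\xx}$. Since $\sum_{i>r}f\,p_i^2\equiv f(1-P)\bmod J_{grad}^{\xx}$, the two routes prove the same estimate, but yours works entirely with real polynomials at that step and avoids the conjugation bookkeeping; this is a small but genuine gain in transparency. The strengthening for (3), using $(f-q_i^2)p_i^2\in J_i^{\xx}\cdot\bigcap_{k\ne i}J_k^{\xx}\subset J_{grad}^{\xx}$, is also slightly more carefully stated than the paper's ``$f\,p_i^2=q_i^2\bmod J_{grad}^{\xx}$'', which reads as a small abuse of notation for the same computation.
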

\begin{proof}
By Lemma \ref{lem:lagrange}, we have 
$$ 
1 = (\sum_{i=0}^{s} p_{i})^{2} = \sum_{i=0}^{s} p_{i}^{2}  \mod J_{grad} .
$$
Thus $f = \sum_{i=0}^{s}  f \, p_{i}^{2} \mod J_{grad} $.

By Lemma \ref{lem:preorder}, $-1 \in \Pc^{+}(C)  + (\bigcap_{j>r} J_{j}^{\xx})$ so that
$f=\frac{1}{4} ( (f+1)^{2}- (f-1)^{2}) \in \Pc^{+}(C)  + \cap_{j>r}
J_{j}^{\xx}$ and 

\begin{equation}\label{eq:dec}
\sum_{i>r}f\, p_{i}^{2} \in \Pc^{+}(C)  + \bigcap_{j=0}^{s}
J_{j}^{\xx} = \Pc^{+}(C) + J^{\xx}_{grad}.
\end{equation} 

As the polynomial $(f-f_{i})\, p_{i}^{2}$ vanishes on $V_{grad} $, we
deduce that 
$$ 
f = \sum_{i=0}^{r} f_{i}\, p_{i}^{2} + \sum_{i=r+1}^{s} f\, p_{i}^{2}
+ \sqrt{J_{grad}^{\xx}}
= \sum_{i=0}^{r} f_{i}\, p_{i}^{2}  + \Pc^{+}(C) +  \sqrt{J_{grad}^{\xx} },
$$
which proves the first point.

If $f\geq 0$ on $S_{KKT}$, then $f_{i}\geq 0$ for $i=0, \ldots, r$ and $\sum_{i=0}^{r} f_{i}\,
p_{i}^{2} \in  \Pc^{+}(C) $ so that 
$$
f \in \Pc^{+}(C) + \sqrt{J_{grad}^{\xx}},
$$
which proves the second point.

If $f>0$ on $S_{KKT}$ by Lemma \ref{epsilon}, we have $f p_{i}^{2} =
q_{i}^{2} \mod J_{grad}^{\xx}$ with $q_{i} \in \R[\xx]$, which shows
that 
$$
\sum_{i=0}^{r} f_{i}\, p_{i}^{2}
=  
\sum_{i=0}^{r} q_{i}^{2} \mod J_{grad}^{\xx}
$$
Therefore, $\sum_{i=0}^{r} f_{i}\, p_{i}^{2} \in  \Pc^{+}(C) +
J_{grad}^{\xx}$ and $f\in \Pc^{+}(C) + J_{grad}^{\xx}$ by \eqref{eq:dec}, which proves the third point.
\end{proof}


This theorem involves only polynomials in $\R[\xx]$
and the points (2) and (3) generalize
results of \cite{DNP} on the representation of
positive polynomials.

Let us give now a refinement of Theorem \ref{thm:sos+rad} with a
control of the
degrees of the polynomials involved in the representation of $f$ as an
element of $\Pc^{+} (C) + J_{grad}^{\xx}$.
\begin{thm}\label{epsilonth} Let $C\subset \R[\xx]$ be a set of
constraints such that $\Vc(C^{0}) \subset V_{grad}^{\xx}$ and $\Sc^{+}(C)=\Sc^{+} (\gb)$.
If $f\geq 0$ on $S_{KKT}$, then there exists $t_{0}$ such that $\forall
\epsilon>0$, 
$$ 
f+\epsilon \in 
\Pc_{t_{0}} (C).
$$
\end{thm}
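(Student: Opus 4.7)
\medskip
\noindent\textbf{Proof plan.} The plan is to replay the proof of Theorem \ref{thm:sos+rad}(3) applied to $f+\epsilon$, keeping track of degrees so that the representation of $f+\epsilon$ lies in a truncated preordering whose order $t_{0}$ does \emph{not} depend on $\epsilon$. The key point is that Lemma \ref{epsilon} provides degree bounds that are uniform in $\epsilon$, and the hypothesis $\Vc(C^{0})\subset V^{\xx}_{grad}$ is what lets us re-express the leftover $J_{grad}^{\xx}$-residue as an element of $(C^{0})$ with bounded multipliers.

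First, I decompose $J_{grad}=J_{0}\cap\cdots\cap J_{s}$ via Lemma \ref{pis}, with $f\equiv f_{i}$ on $V_{i}=\Vc(J_{i})$, $f_{0}<\cdots<f_{r}$ real and each $V_{i}$ with $i\le r$ containing a projection to a real point of $S_{KKT}$ (Lemma \ref{rem:proj}). Since $f\ge 0$ on $S_{KKT}$, we get $f_{i}\ge 0$ for $i\le r$. I then build Lagrange polynomials $p_{0},\dots,p_{s}$ by Lemma \ref{lem:lagrange}, so that $\sum_{i}p_{i}^{2}\equiv 1\bmod J_{grad}$ and $p_{i}p_{j}\in J_{grad}$ for $i\ne j$. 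For each $i\le r$, I apply Lemma \ref{epsilon} with $J=J_{i}$, $V=V_{i}$, $f^{*}=f_{i}\in\R^{+}$: this provides a single integer $t_{i}$ (independent of $\epsilon$) and, for every $\epsilon>0$, a polynomial $q_{i,\epsilon}\in\R[\xx]$ with $\deg q_{i,\epsilon}\le t_{i}$ and $f+\epsilon\equiv q_{i,\epsilon}^{2}\bmod J_{i}$. Multiplying by $p_{i}^{2}$ and using $p_{i}^{2}\in\bigcap_{j\ne i}J_{j}$ yields $(f+\epsilon)p_{i}^{2}\equiv (q_{i,\epsilon}p_{i})^{2}\bmod J_{grad}$ with the degree of $q_{i,\epsilon}p_{i}$ bounded independently of $\epsilon$.

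For $i>r$, I use Lemma \ref{lem:preorder}, which gives a fixed decomposition $-1\in\Pc^{+}(C)+\bigcap_{j>r}J_{j}^{\xx}$, and the identity $f+\epsilon=\tfrac{1}{4}((f+\epsilon+1)^{2}-(f+\epsilon-1)^{2})$ to put $f+\epsilon$ into $\Pc^{+}(C)+\bigcap_{j>r}J_{j}^{\xx}$ with degrees bounded uniformly in $\epsilon$ (the $\epsilon$-dependence is only through constants absorbed in the sum-of-squares part). Multiplying by $\sum_{i>r}p_{i}^{2}\in\bigcap_{j\le r}J_{j}^{\xx}$ then places $\sum_{i>r}(f+\epsilon)p_{i}^{2}$ in $\Pc^{+}(C)+J_{grad}^{\xx}$, again with uniform bounds. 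Assembling these pieces via $\sum_{i}p_{i}^{2}\equiv 1\bmod J_{grad}$, I obtain an identity $f+\epsilon=\sigma_{\epsilon}+h_{\epsilon}$ with $\sigma_{\epsilon}\in\Pc^{+}(C)$ and $h_{\epsilon}\in J_{grad}^{\xx}$, both of degree at most some $T$ independent of $\epsilon$.

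The final step uses the hypothesis $\Vc(C^{0})\subset V^{\xx}_{grad}$: choosing the representative $J_{grad}$ of $V_{grad}$ so that its $\xx$-elimination $J_{grad}^{\xx}$ is contained in $(C^{0})$, any $h\in J_{grad}^{\xx}$ of degree $\le T$ can be written as $\sum_{i}c_{i}^{0}\tilde h_{i}$ with $\deg\tilde h_{i}$ bounded (via a fixed Gröbner/effective ideal membership computation on the finite-dimensional slice $J_{grad}^{\xx}\cap\R[\xx]_{T}$). Consequently $h_{\epsilon}$ belongs to the $(C^{0})$-part of $\Pc_{t_{0}}(C)$ for $t_{0}$ large enough (depending on $T$ but not on $\epsilon$), and $f+\epsilon=\sigma_{\epsilon}+h_{\epsilon}\in\Pc_{t_{0}}(C)$ for all $\epsilon>0$. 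The hard point, and the place requiring care, is precisely this last step: the set-theoretic inclusion $\Vc(C^{0})\subset V_{grad}^{\xx}$ only yields $\sqrt{J_{grad}^{\xx}}\subset\sqrt{(C^{0})}$, so one has to pick $J_{grad}$ (which is free as long as $\Vc(J_{grad})=V_{grad}$) with generators actually lying in $(C^{0})\cdot\R[\zz]+I_{grad}$, so that $J_{grad}^{\xx}\subset(C^{0})$ at the ideal (not merely radical) level and the $C^{0}$-multipliers can be chosen of degree bounded by $T$.
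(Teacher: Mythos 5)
Your argument replays the paper's proof: both start from the decomposition supporting Theorem \ref{thm:sos+rad} (Lemmas \ref{pis} and \ref{lem:lagrange}), use Lemma \ref{epsilon} with degree bounds uniform in $\epsilon$ for the components with $i\le r$, dispatch the components with $i>r$ via Lemma \ref{lem:preorder}, and finally convert $J_{grad}^{\xx}$-residues into $\SpanDeg{C^{0}}{t_{0}}$ using the finite dimensionality of the bounded-degree slice. So the structure and every key lemma match.

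The one concrete slip is in your proposed recipe for choosing $J_{grad}$. You suggest taking $J_{grad}$ with generators in $(C^{0})\cdot\R[\zz]+I_{grad}$ and assert this forces $J_{grad}^{\xx}\subset(C^{0})$ at the ideal level. It does not: that hypothesis only gives the inclusion $J_{grad}\subset(C^{0})\cdot\R[\zz]+I_{grad}$, and the contraction $\bigl((C^{0})\cdot\R[\zz]+I_{grad}\bigr)\cap\R[\xx]$ contains $(C^{0})+I_{grad}^{\xx}$, which is typically strictly larger than $(C^{0})$. So this choice does not deliver what you need. The paper instead takes the \emph{intersection} $J_{grad}=(C^{0})\cap I_{grad}$; since $(C^{0})$ is generated by polynomials in $\xx$ alone and $\R[\zz]$ is a polynomial extension of $\R[\xx]$, one gets $J_{grad}^{\xx}=(C^{0})\cap I_{grad}^{\xx}\subset(C^{0})$ directly. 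Replacing your recipe with this intersection, the rest of your argument goes through and coincides with the paper's proof.
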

\begin{proof}
Let $J_{grad}= (C^{0}) \cap I_{grad}\subset \R[\zz]$, so that $\Vc
(J_{grad}) = V_{grad}$ since $\Vc(C^{0}) \subset V_{grad}^{\xx}$.
Using the decomposition \eqref{eq:dec} obtained in the proof of Theorem
\ref{thm:sos+rad},
we can choose $t'_{0} \in \N$ and $t_{0}\geq t_{0}' \in \N$ big enough
such that $\deg (p_{i})\leq t_{0}/2$ and 
$$ 
\sum_{i>r}f\, p_{i}^{2} \in \Pc _{t'_{0}}^{+}(C) + \
J_{grad}\cap \R[\xx]_{t'_{0}} \subset \Pc_{t_{0}} (C),
$$
since $J_{grad}^{\xx}= (C^{0}) \cap I_{grad}^{\xx}\subset (C^{0})$. 
Then $\forall \epsilon>0$,
\begin{equation}\label{eq:mod1}
\sum_{i>r} (f+\epsilon)\, p_{i}^{2} =
\sum_{i>r} f\, p_{i}^{2} 
+
\sum_{i>r} \epsilon\, p_{i}^{2} 
 \in \Pc_{t_{0}} (C).
\end{equation}

As $\forall \ \epsilon>0, \ f + \epsilon >0$ on $S_{KKT}$, i.e, $f_{i} +
\epsilon >0$ for $i=0, \ldots, r$, we deduce
from Lemma \ref{epsilon} that if $t_{0}$ is big enough, we have
\begin{equation}\label{eq:mod2}
(f +\epsilon)\, p_{i}^{2}= q_{i}^2 \mod \SpanDeg{C^{0}}{t_{0}}\cap \R[\xx]
\end{equation}
with $deg (q_{i}) \leq t_{0}/2$ for $i=0, \ldots, r$. 

Since $1 - \sum_{i=0}^{s} p_{i}^{2} = 0 \mod (C^{0})$, we can choose
$t_{0}$ big enough so that 
\begin{equation}\label{eq:mod3}
 (f+\epsilon) - \sum_{i=0}^{s} (f+\epsilon)\, p_{i}^{2}
 \in\SpanDeg{C^{0}}{t_{0}} \cap \R[\xx].
\end{equation}

From Equations \eqref{eq:mod1}, \eqref{eq:mod2}, \eqref{eq:mod3}, we deduce that if $t_{0}\in \N$ is big
enough, $\forall \epsilon>0$ 
$$ 
f+\epsilon \in \Pc_{t_{0}} (C),
$$
which concludes the proof of the theorem.
\end{proof}


\section{Finite convergence}

In this section, we show that the sequence of relaxation problems
attains its limit in a finite number of steps and that the minimizer
ideal can be recovered from an optimal solution of the corresponding relaxation problem.
%
%
We use the following notation:
\begin{itemize}
 \item $f^{*}= \inf_{\xx \in S_{KKT}} f (\xx)$
 \item $S_{min}=\{\xx^{*} \in S_{KKT}\mid f (\xx^{*}) =f^{*}\}$
\end{itemize}

We first show that $S_{min}=\emptyset$ can be detected from an adapted 
relaxation sequence:
\begin{prop}\label{prop:empty}
Let $C=(C^{0};C^{+})$ be a set of constraints of $\R[\xx]$, such that
$S_{min} \subset \Sc(C)$ and $\Vc(C^{0}) \subset V^{\xx}_{KKT}$ and
$C^{+}=\gb^{+}$. Then 
$S_{min}=\emptyset$, if and only if, there $t_0 \in \N$ such that $\forall t\ge t_0$, $\Lc_{t}(C)=\emptyset$.
\end{prop}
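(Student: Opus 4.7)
The plan is a two-direction argument: the nontrivial (forward) direction uses the Positivstellensatz-style consequence of Corollary 4.8, while the converse is an immediate contrapositive via point evaluation.

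For the forward implication, I would first observe that under the hypothesis $\Vc(C^{0}) \subset V^{\xx}_{KKT}$, we have $\Vc(C^{0}) \subset V^{\xx}_{grad}$, since by Proposition \ref{prop:proj} the KKT variety is the projection of the gradient variety onto the $\yy$-coordinates, and hence $V^{\xx}_{KKT}=V^{\xx}_{grad}$. Therefore I may take $J_{grad}=(C^{0})\cap I_{grad}\subset \R[\zz]$, which satisfies $\Vc(J_{grad})=V_{grad}$ and $J_{grad}^{\xx}\subset (C^{0})$. Assuming $S_{min}=\emptyset$, Corollary \ref{cor:Sminempty} applied with this $J_{grad}$ (together with $C^{+}=\gb^{+}$, which gives $\Sc^{+}(C)=\Sc^{+}(\gb)$) yields $-1\in\Pc^{+}(C)+J_{grad}^{\xx}\subset \Pc^{+}(C)+(C^{0})\subset \Pc(C)$. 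Since $\Pc(C)=\bigcup_{t}\Pc_{t}(C)$, there is a finite $t_{0}\in\N$ with $-1\in\Pc_{t_{0}}(C)$, and the inclusions $\Pc_{t_{0}}(C)\subset \Pc_{t}(C)$ for $t\geq t_{0}$ propagate this to all larger $t$.

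From $-1\in\Pc_{t}(C)$ for $t\geq t_{0}$, the contradiction is immediate: any $\Lambda\in\Lc_{t}(C)$ must satisfy $\Lambda(-1)\geq 0$, whereas the normalization $\Lambda(1)=1$ forces $\Lambda(-1)=-1<0$. Hence $\Lc_{t}(C)=\emptyset$ for every $t\geq t_{0}$.

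For the converse, I would argue by contrapositive. Suppose $S_{min}\neq\emptyset$ and pick any $\xx^{*}\in S_{min}\subset \Sc(C)$. The evaluation functional $\ev_{\xx^{*}}\colon p\mapsto p(\xx^{*})$ lies in $(\R[\xx]_{2t})^{*}$ for every $t$, satisfies $\ev_{\xx^{*}}(1)=1$, and is non-negative on $\Pc_{t}(C)$ because every generator of $\Pc_{t}(C)$ evaluates non-negatively at a point of $\Sc(C)$. Thus $\ev_{\xx^{*}}\in \Lc_{t}(C)$ for every $t\in\N$, contradicting the assumption that $\Lc_{t}(C)$ is eventually empty.

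The only subtle point is ensuring Corollary \ref{cor:Sminempty} really applies with the chosen ideal $J_{grad}=(C^{0})\cap I_{grad}$; once the identification $V^{\xx}_{KKT}=V^{\xx}_{grad}$ is made and the hypothesis $\Sc^{+}(C)=\Sc^{+}(\gb)$ is verified from $C^{+}=\gb^{+}$, the rest is a direct combination of Corollary \ref{cor:Sminempty} with the definition of $\Lc_{t}(C)$.
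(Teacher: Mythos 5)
Your proof is correct and follows essentially the same route as the paper: set $J_{grad}=(C^{0})\cap I_{grad}$, invoke Corollary \ref{cor:Sminempty} to get $-1\in\Pc^{+}(C)+J_{grad}^{\xx}\subset\Pc(C)$, pick $t_{0}$ with $-1\in\Pc_{t_{0}}(C)$ to force $\Lc_{t}(C)=\emptyset$, and handle the converse by evaluation at a point of $S_{min}\subset\Sc(C)$. The only cosmetic difference is that you use the inclusion $J_{grad}^{\xx}\subset(C^{0})$ directly rather than passing through the auxiliary constraint set $C'$ that the paper introduces.
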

\begin{proof}
Let $J_{grad}= (C^{0})\cap I_{grad}$ and let $C'$ be a set of
constraints such that $(C'^{0})=J_{grad}\cap \R[\xx]=J^{\xx}_{grad}$ and $C'^{+}=\gb^{+}$ be a
finite set. By hypothesis, $\Vc (J_{grad})= V_{grad}$. We deduce from
Corollary \ref{cor:Sminempty} that if $S_{min}=\emptyset$, then 
$$ 
-1 \in \Pc^{+} (C') + (C'^{0}) \subset \Pc (C)= \cup_{t\in \N} \Pc_{t} (C).
$$
Thus there exists $t_{0}$ such that 
$-1 \in \Pc_{t} (C)$ for $t\geq t_{0}$, which
implies that $\Lc_{t} (C)=\emptyset$, since if there exists $\Lambda \in
\Lc_{t} (C)$, then $\Lambda (1)=1$ and $\Lambda (-1)\ge 0$.
 
Conversely, suppose that $S_{min} \neq \emptyset$ contains a point
$\xx^{*}$. As $S_{min}\subset \Sc (C)$, for all $t\in \N$ the evaluation  $\underline{\mathbf{1}}_{\xx^*}$  at $\xx^*$
restricted to $\R[\xx]_{2t}$ is an element of $\Lc_{t} (C)\neq \emptyset$.
\end{proof}
This proposition gives a way to check whether $S_{min}=\emptyset$, using
the relaxation sequence $\Lc_{t} (C)$. We are now going to analyse the case
where $f$ has KKT minimizers on $S$.

\textit{From now on, we assume that $S_{min}\neq \emptyset$}.
 
First, we recall a property similar to \cite[Claim 4.7]{LLR07}:
\begin{prop}\label{radical}
Let $C=(C^{0};C^{+})$ be a set of constraints of $\R[\xx]$. 
There exists $t_0 \in \N$ such that $\forall t\ge t_0$, 
 $\forall \Lambda \in \Lc_{t}(C),\  \sqrt[C^{+}]{C^{0}} \subset (\ker M_{\Lambda}^t)$.
\end{prop}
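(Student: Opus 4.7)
The approach is to show that every generator of $\sqrt[C^{+}]{C^{0}}$ actually lies in $\ker M_{\Lambda}^{t}$ (hence in the ideal it generates), by pushing a Positivstellensatz witness through the PSD structure of $\Lambda$ and then iteratively ``taking square roots'' inside the kernel. Since $\sqrt[C^{+}]{C^{0}} \subset \R[\xx]$ is finitely generated (Hilbert's basis theorem), I would fix generators $p_{1}, \ldots, p_{k}$, and for each $p_{i}$ a witness $p_{i}^{2 m_{i}} + q_{i} \in (C^{0})$ with $m_{i} \in \N$ and $q_{i} \in \Pc^{+}(C)$. I would then choose $t_{0}$ large enough that all these witness representations are expressible in degree $\leq 2t_{0}$, that $q_{i} \in \Pc_{t_{0}}(C)$, and that $t_{0} \geq m_{i}\deg p_{i}$ for every $i$.

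First I would note that any $\Lambda \in \Lc_{t}(C)$ vanishes on $(C^{0}) \cap \R[\xx]_{2t}$: the preordering $\Pc_{t}(C)$ admits arbitrary (not just SoS) multipliers against the equality constraints, so both $c^{0} h$ and $-c^{0} h$ lie in $\Pc_{t}(C)$ whenever the degrees fit, forcing $\Lambda(c^{0} h) = 0$. Applied to the witness, this gives $\Lambda(p_{i}^{2 m_{i}}) = -\Lambda(q_{i})$. Since $q_{i} \in \Pc_{t}(C)$, $\Lambda(q_{i}) \geq 0$, hence $\Lambda(p_{i}^{2 m_{i}}) \leq 0$; but $p_{i}^{2 m_{i}} = (p_{i}^{m_{i}})^{2}$ with $p_{i}^{m_{i}} \in \R[\xx]_{t}$, so $\Lambda(p_{i}^{2 m_{i}}) \geq 0$ by PSD-ness of $\Lambda$ on squares. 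Therefore $\Lambda((p_{i}^{m_{i}})^{2}) = 0$, and by the standard Cauchy--Schwarz consequence for the PSD form $(a,b) \mapsto \Lambda(ab)$ on $\R[\xx]_{t}$ (recalled just after the definition of $M_{\Lambda}^{t}$ in the excerpt), this already yields $p_{i}^{m_{i}} \in \ker M_{\Lambda}^{t}$.

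The subtle step, which I expect to be the main obstacle, is the descent from $p_{i}^{m_{i}} \in \ker M_{\Lambda}^{t}$ to $p_{i}$ itself in the kernel. I would prove it by strong induction on the exponent: assuming $p_{i}^{m} \in \ker M_{\Lambda}^{t}$ with $m \geq 2$, set $j = \lceil m/2 \rceil$, so $1 \leq j < m$ and $2j-m \in \{0,1\}$. Then $p_{i}^{2j-m}$ is either $1$ or $p_{i}$, and in both cases lies in $\R[\xx]_{t}$, hence $\Lambda(p_{i}^{2j}) = \Lambda(p_{i}^{m} \cdot p_{i}^{2j-m}) = 0$. Cauchy--Schwarz then gives $\Lambda(p_{i}^{j} q)^{2} \leq \Lambda(p_{i}^{2j})\Lambda(q^{2}) = 0$ for every $q \in \R[\xx]_{t}$, so $p_{i}^{j} \in \ker M_{\Lambda}^{t}$. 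Iterating this halving terminates with exponent $1$, yielding $p_{i} \in \ker M_{\Lambda}^{t}$; the choice $t_{0} \geq m_{i}\deg p_{i}$ guarantees that all intermediate powers remain in $\R[\xx]_{t}$. Since $\ker M_{\Lambda}^{t} \subset (\ker M_{\Lambda}^{t})$ and the latter is an ideal containing every generator $p_{i}$, the whole ideal $\sqrt[C^{+}]{C^{0}}$ is contained in $(\ker M_{\Lambda}^{t})$, completing the proof.
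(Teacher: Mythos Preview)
Your proof is correct and follows essentially the same route as the paper: fix generators of $\sqrt[C^{+}]{C^{0}}$, use a Positivstellensatz witness to force $\Lambda(p_{i}^{2m_{i}})=0$, and then descend to $p_{i}\in\ker M_{\Lambda}^{t}$. The paper simply asserts this last implication in one line, whereas you spell out the exponent-halving Cauchy--Schwarz descent explicitly; your version is more careful, but the argument is the same.
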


\begin{proof}
Let $C^{0}=\{g_{1}, \ldots, g_{l}\}$ and let $q_1,\ldots, q_k$ be
generators of $J := \sqrt[C^{+}]{C^{0}}$. 
By the Positivestllensatz, for $j \in {1,\ldots,k}$, there exist
$m_j \in \N^{*}$ and polynomials $u_r^{(j)}\in \R[\xx]$ and $\sigma_j\in \Pc^{+}(C)$
such that 
$$
q_j^{2m_j} + \sigma_j= \sum_{r=1}^l u_r^{(j)} g_r.
$$
Let us take $t_{0}\in \N$ big enough such that 
$u_r^{(j)}g_r\in \SpanDeg{C}{t_{0}}$ and $\sigma_{j} \in \Pc_{t_{0}}^{+}(C)$. 
Then for all $t\geq t_{0}$ and all  $\Lambda \in \Lc_{t}(C)$, we have $\Lambda
(u_r^{(j)}g_r)=0$, $\Lambda (q_j^{2m_j}) \geq 0$, $\Lambda (\sigma_j)
\geq 0$ and $\Lambda (q_j^{2m_j}) + \Lambda (\sigma_j) = 0$, which implies
that  $\Lambda (q_j^{2m_j})=0$ and $q_j \in \ker M_{\Lambda}^t$.
This proves that $(q_{1}, \ldots, q_{l}) = J \subset ( \ker M_{\Lambda}^t)$.
\end{proof}

\begin{remark}\label{rem:extend}
With the same arguments, we can show that for any $t'\in \N$, there
exists $t_0'\ge t'$ such that $\forall t\ge t_0'$, $\forall \Lambda \in
\Lc_{t}(C)$, 
$$ 
\SpanDeg{Q}{t'} \subset  \ker M_{\Lambda}^t,
$$
where $Q=\{q_{1}, \ldots, q_{k}\}$ generates  $J =
\sqrt[C^{+}]{C^{0}}$.

\end{remark}

The next result shows that in the sequence of optimization problems
that we consider, the minimum of $f$ on $S_{KKT}$ is reached from
some degree.


\begin{thm}\label{cmayor}\label{cpi-kernel}\label{mayor}\label{pi-kernel}
Let $C$ be a set of constraints of $\R[\xx]$ such that 
$S_{min} \subset \Sc (C) \subset V^{\xx,\R}_{KKT}$. There exists $t_1 \ge 0$ such that $\forall t\ge t_1$,
\begin{enumerate}
 \item $f^{\mu}_{t,C} = f^{*}$ is  reached for some $\Lambda^* \in
   \Lc_{t} (C)$,
\item $\forall \Lambda^* \in \Lc_{t} (C)$
with $\Lambda^{*} (f)=f^{\mu}_{t,C}=f^*$, we have $p_i \in  \ker M_{\Lambda^*}^t, \ \forall i=1,\ldots,r$,
 \item if $\Vc(C^{0}) \subset V^{\xx}_{KKT}$ then $f^{sos}_{t,C} = f^{\mu}_{t,C} = f^{*}$.
\end{enumerate}
\end{thm}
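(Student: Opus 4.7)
The plan is to reduce everything to the representation results of Section 4 by augmenting $C$ with generators of $J:=\sqrt[C^{+}]{C^{0}}$, then pull the resulting sum-of-squares certificates back to linear forms in $\Lc_{t}(C)$ via Proposition \ref{radical} and Remark \ref{rem:extend}. Concretely, let $Q=\{q_{1},\ldots,q_{k}\}$ be a finite generating set of $J$ and set $\tilde C^{0}=C^{0}\cup Q$, $\tilde C^{+}=C^{+}$, so $\Sc(\tilde C)=\Sc(C)$ and $\Sc^{+}(\tilde C)=\Sc^{+}(\gb)$. Since $\Sc(C)\subset V^{\xx,\R}_{KKT}\subset V^{\xx}_{KKT}$ and $V^{\xx}_{KKT}$ is Zariski closed in $\C^{n}$, we get $\Vc(J)=\overline{\Sc(C)}^{\mathrm{Zar}}\subset V^{\xx}_{KKT}=V^{\xx}_{grad}$, whence $\Vc(\tilde C^{0})\subset V^{\xx}_{grad}$, so Theorem \ref{epsilonth} is applicable to $\tilde C$.

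For (1), any $\xx^{*}\in S_{min}\subset \Sc(C)$ gives $\ev_{\xx^{*}}\in \Lc_{t}(C)$ with value $f^{*}$, which already shows $f^{\mu}_{t,C}\le f^{*}$ and that the infimum is attained. For the reverse inequality I would apply Theorem \ref{epsilonth} to $f-f^{*}\ge 0$ on $S_{KKT}$ and to $\tilde C$, producing $t_{0}$ such that $f-f^{*}+\epsilon\in \Pc_{t_{0}}(\tilde C)$ for every $\epsilon>0$. By Remark \ref{rem:extend}, pick $t_{1}\ge t_{0}$ so that for every $t\ge t_{1}$ and every $\Lambda\in \Lc_{t}(C)$, $\SpanDeg{Q}{2t_{0}}\subset \ker M_{\Lambda}^{t}$; then $\Lambda(q\,h)=0$ for all $q\in Q$ and $h\in\R[\xx]_{2t_{0}-\deg q}$, $\Lambda$ vanishes on $\SpanDeg{C^{0}}{2t_{0}}$ by membership in $\Lc_{t}(C)$, and $\Lambda$ is non-negative on the quadratic-module part of $\Pc_{t_{0}}(\tilde C)$. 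Therefore $\Lambda(f-f^{*}+\epsilon)\ge 0$, and letting $\epsilon\to 0$ gives $\Lambda(f)\ge f^{*}$, completing (1).

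For (2), I would massage Theorem \ref{thm:sos+rad}(1) using the identity $\sum_{i=0}^{s}p_{i}^{2}\equiv 1\pmod{J_{grad}}$ (from the proof of Lemma \ref{lem:lagrange}) and $f^{*}=f_{0}$ to obtain the refined membership $f-f^{*}-\sum_{i=1}^{r}(f_{i}-f_{0})\,p_{i}^{2}\in \Pc^{+}(C)+\sqrt{J_{grad}^{\xx}}$. Since $\Sc(C)\subset V^{\xx,\R}_{KKT}$, we have $\sqrt{J_{grad}^{\xx}}=\Ic(V^{\xx}_{KKT})\subset \Ic(\Sc(C))=\sqrt[C^{+}]{C^{0}}=(Q)$, so this residue lies in $\SpanDeg{Q}{t'_{0}}+\Pc^{+}_{t'_{0}}(C)$ for a fixed $t'_{0}$. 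Enlarging $t_{1}$ so Remark \ref{rem:extend} absorbs $\SpanDeg{Q}{t'_{0}}$ into $\ker M_{\Lambda^{*}}^{t}$ and so $\deg p_{i}\le t$ for all $i$, any optimal $\Lambda^{*}\in\Lc_{t}(C)$ with $\Lambda^{*}(f)=f^{*}$ satisfies $0=\Lambda^{*}(f-f^{*})=\sum_{i=1}^{r}(f_{i}-f_{0})\,\Lambda^{*}(p_{i}^{2})+\Lambda^{*}(\sigma)+\Lambda^{*}(\rho)$, where $\sigma\in\Pc^{+}_{t'_{0}}(C)$, $\rho\in\SpanDeg{Q}{t'_{0}}$, $\Lambda^{*}(\sigma)\ge 0$, $\Lambda^{*}(\rho)=0$, and each $\Lambda^{*}(p_{i}^{2})\ge 0$ by positive semi-definiteness of $M_{\Lambda^{*}}^{t}$. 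Since $f_{i}-f_{0}>0$ for $i\ge 1$, each $\Lambda^{*}(p_{i}^{2})$ vanishes, and Cauchy--Schwarz for $M_{\Lambda^{*}}^{t}$ yields $p_{i}\in\ker M_{\Lambda^{*}}^{t}$.

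For (3), the stronger hypothesis $\Vc(C^{0})\subset V^{\xx}_{KKT}$ lets me apply Theorem \ref{epsilonth} directly to $C$ itself: $f-f^{*}+\epsilon\in \Pc_{t_{0}}(C)$ for every $\epsilon>0$, hence $f^{sos}_{t,C}\ge f^{*}-\epsilon$ for $t\ge t_{0}$; letting $\epsilon\to 0$ and combining with $f^{sos}_{t,C}\le f^{\mu}_{t,C}=f^{*}$ (Remark \ref{rem:fmin} together with part (1)) gives the claimed chain of equalities. The main technical obstacle will be the simultaneous degree bookkeeping: picking one $t_{1}$ that validates Remark \ref{rem:extend} both for the certificate from (1) (bounded by $2t_{0}$) and for the residue from (2) (bounded by $t'_{0}$), while ensuring $\deg p_{i}\le t_{1}$ for every $i\le r$.
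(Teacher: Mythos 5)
Your proof is correct; part (2) essentially mirrors the paper's argument, but you reach part (1) by a genuinely different and somewhat longer route. For the lower bound $\Lambda(f)\ge f^{*}$ you invoke Theorem \ref{epsilonth} applied to an augmented constraint set $\tilde C=(C^{0}\cup Q;C^{+})$ with $Q$ generating $\sqrt[C^{+}]{C^{0}}$, pull the resulting $\epsilon$-perturbed preordering certificate back to any $\Lambda\in\Lc_{t}(C)$ via Remark \ref{rem:extend}, and send $\epsilon\to 0$. The paper instead applies Theorem \ref{thm:sos+rad}(1) directly to $f-f^{*}$ and writes $f-f^{*}=\sum_{i\ge 1}(f_i-f^{*})p_i^{2}+h+g$ with $h\in\Pc^{+}(C)$ and $g\in\sqrt{I_{grad}}\cap\R[\xx]\subset\sqrt[C^{+}]{(C^{0})}$; once Proposition \ref{radical} (via Remark \ref{rem:extend}) annihilates $g$, the single identity $\Lambda^{*}(f)-f^{*}=\sum_{i\ge 1}(f_i-f^{*})\Lambda^{*}(p_i^{2})+\Lambda^{*}(h)\ge 0$ delivers (1) and (2) simultaneously with one degree threshold, since the non-negative right-hand side can only vanish if every $\Lambda^{*}(p_i^{2})$ does. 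Your route derives essentially the same decomposition twice, once hidden inside Theorem \ref{epsilonth} for (1) and once explicitly for (2), which is why you must juggle the thresholds $t_{0},t_{0}',t_{1}$ at the end. For (3) you apply Theorem \ref{epsilonth} directly to $C$, whereas the paper routes through an auxiliary $\tilde C$ with $\tilde C^{0}$ generating $(C^{0})\cap I^{\xx}_{grad}$ and concludes via $\Pc_{t_0}(\tilde C)\subset\Pc_{t_1}(C)$; your direct application is legitimate and cleaner, since the hypothesis $\Vc(C^{0})\subset V^{\xx}_{KKT}=V^{\xx}_{grad}$ is precisely what Theorem \ref{epsilonth} requires.
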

\begin{proof}
By Theorem \ref{thm:sos+rad}(1) applied to $f-f^{*}$, we can write 
 \begin{equation*}
  f-f^*\equiv \sum_{i=1}^r (f_i-f^*)\,p_i^2 + h +g. \label{desco3}
 \end{equation*}
with $h\in\Pc^{+} (C)$ and $g \in \sqrt{I_{grad}}\cap \R[\xx] =
\sqrt{I_{KKT}} \cap \R[\xx] \subset
\sqrt[\R]{I_{KKT}}\cap \R[\xx]$ (by Proposition \ref{prop:proj}). 
Since $\Sc (C)\subset V^{\xx,\R}_{KKT} =\overline{\pi^{\xx} (V^{\R}_{KKT})}$, we have 
$\sqrt[\R]{I_{KKT}}\cap \R[\xx] \subset \Ic (\Sc (C))=
\sqrt[C^{+}]{(C^{0})}$ 
 by
the Positivstellensatz. We deduce that $g \in \sqrt[C^{+}]{(C^{0})}$.
By proposition \ref{radical}, there exists $t_{1}\geq t_{0}$ such that
for all $t\ge t_{1}$, for all $\Lambda \in \Lc_{t}(C)$, 
 $\Lambda (g)=0$,  $\Lambda (h)\ge0$. 

Let us fix $t\geq t_{1}$ and $\Lambda^{*} \in \Lc_{t}(C)$ such that 
$\Lambda^{*} (f)=f^{\mu}_{t,C}$.
 Then
\begin{equation*}\label{desco4}
\Lambda^{*} (f-f^{*}) = \sum_{i=1}^r (f_i-f^*) \Lambda^{*} (p_i^2) + \Lambda^{*}(h).
\end{equation*} 
As $f_i - f^* = f_{i}-f_{0}> 0$, $\Lambda^*(p_i^2) \ge 0$ and
$\Lambda^{*}(h) \ge 0$ ($h\in \Pc_{t}^{+} (C)$), 
we deduce that $\Lambda^{*} (f-f^{*}) = \Lambda^{*} (f)-f^{*} \ge 0$.

As $\emptyset \neq S_{min}\subset \Sc (C)$, we have 
$\Lambda^{*} (f)\leq f^{*}$ (by Remark \ref{rem:fmin}), 
so that $\Lambda^{*} (f) = f^{\mu}_{t,C} =f^{*}$, which proves the
first point.
Hence for $i=1, \ldots, r$, $\Lambda^{*} (p_i^2)=0$ and $p_{i} \in \ker
M_{\Lambda^{*}}^{t}$, which proves the second
point.

To prove that $f^{sos}_{t,C} =f^{*}$ when  $\Vc (C^{0}) \subset V_{KKT}^{\xx}$, we apply Theorem \ref{epsilonth} to
$f-f^*$ which is positive on $S_{KKT}$. Let us take $J_{grad} = (C^{0}) \cap
I_{grad} \subset \R[\zz]$. We denote by $\tilde{C}$ the set of
constraints such that $\tilde{C}^{0}$ is a finite
family of generators of $J_{grad}\cap \R[\xx]$ and $\tilde{C}^{+}=C^{+}$.

By Theorem \ref{epsilonth}, there exists $t_{0}$ such that $\forall \epsilon>0$,
$$
f-f^*+\epsilon  \in \Pc_{t_{0}} (\tilde{C}).
$$
As $(\tilde{C}^{0}) = (C^{0}) \cap I_{grad} \subset (C^{0})$, we can choose $t_{1}\ge t_{0}$ such that 
$\SpanDeg{\tilde{C}}{t_{0}} \subset \SpanDeg{C}{t_{1}}$ and 
$\Pc_{t_{0}} (\tilde{C}) \subset \Pc_{t_{1}} (C)$. 

Then $\forall t\geq t_{1}$, 
$f-f^*+\epsilon \in \Pc_{t} (C)$. Hence by
maximality, $\forall \epsilon >0, f^*-\epsilon \le f^{sos}_{t,C}$.
We deduce that $f^*\le f^{sos}_{t,C}$, which implies that
$f^{sos}_{t,C} = f^{\mu}_{t,C} = f^{*}$
and proves the third point.
\end{proof}

As for the construction of generators of 
$\sqrt[C^{+}]{I_{KKT}}$ (Proposition \ref{radical}), we can
construct generators of $I_{min}$ from the kernel of a
truncated Hankel operator associated to any linear form which
minimizes $f$, using the following propositions:

\begin{prop}\label{gminequal}
$I_{min}=(p_1,...,p_r) + \sqrt[C^{+}]{I_{KKT}^{\xx}}$.
\end{prop}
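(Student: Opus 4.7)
The plan is to prove the two inclusions separately, using the Lagrange-type polynomials $p_0,\ldots,p_r\in\R[\xx]$ from Lemma \ref{lem:lagrange}, which satisfy $p_i\in\bigcap_{j\neq i}J_j$ and $\sum_{i=0}^{s}p_{i}\equiv 1\pmod{J_{grad}}$. I will also use the identification $\sqrt[C^{+}]{I_{KKT}^{\xx}}=\Ic(S_{KKT})$: this follows from Proposition \ref{prop:proj} (so that $I_{KKT}^{\xx}=I_{grad}^{\xx}$ and hence $V_{KKT}^{\xx,\R}=V_{grad}^{\xx,\R}$), Lemma \ref{rem:proj} (giving $S_{KKT}=V_{grad}^{\xx,\R}\cap\Sc^{+}(\gb)$), and the Positivstellensatz (Theorem \ref{null}(iii)).

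For the inclusion $(p_{1},\ldots,p_{r})+\sqrt[C^{+}]{I_{KKT}^{\xx}}\subset I_{min}$, I first observe that each $p_i$ with $i\geq 1$ lies in $J_{0}$, so it vanishes on $V_0$; being a polynomial in $\xx$ only, it then vanishes on $\pi^{\xx}(V_0^{\R})\supseteq S_{min}$, giving $p_i\in I_{min}$. Combined with $\Ic(S_{KKT})\subseteq\Ic(S_{min})=I_{min}$ (since $S_{min}\subseteq S_{KKT}$), this yields the first inclusion.

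For the reverse inclusion, let $g\in I_{min}$ and split $g=g\sum_{i=1}^{r}p_{i}+h$ with $h:=g\bigl(1-\sum_{i=1}^{r}p_{i}\bigr)$. The first summand obviously belongs to $(p_{1},\ldots,p_{r})$, so it suffices to prove $h\in\Ic(S_{KKT})$, which by the identification above is equivalent to $h$ vanishing on $S_{KKT}$. By Lemma \ref{pis}, any $\xx\in S_{KKT}=\pi^{\xx}(V_{grad}^{\R})\cap\Sc^{+}(\gb)$ is the projection of some $\zz\in V_j^{\R}$ for a unique $j\in\{0,\ldots,r\}$, the uniqueness coming from the distinctness of the values $f_0<\cdots<f_r$ of $f$ on these components. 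Restricting the identity $\sum_{i=0}^{s}p_{i}\equiv 1\pmod{J_{grad}}$ to $V_j$ and using $p_i(\zz)=0$ for $i\neq j$ (a consequence of $p_i\in\bigcap_{k\neq i}J_k\subseteq J_j$), I obtain $p_j(\xx)=1$ and $p_i(\xx)=0$ for $i\in\{0,\ldots,r\}\setminus\{j\}$. Consequently, when $j\geq 1$ one has $\sum_{i=1}^{r}p_{i}(\xx)=1$ and $h(\xx)=0$, while when $j=0$ the point $\xx$ lies in $S_{min}$ so $g(\xx)=0$ and again $h(\xx)=0$. Hence $h$ vanishes on all of $S_{KKT}$, as required.

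The only delicate point is the pointwise evaluation step: although $p_{r+1},\ldots,p_s$ may have complex coefficients, the mod-$J_{grad}$ identity $\sum_{i=0}^{s}p_{i}(\zz)=1$ holds for every $\zz\in V_{grad}$ and in particular for $\zz\in V_j^{\R}$ with $j\leq r$, where all the $p_i$ with $i\neq j$ vanish by construction; since $p_0,\ldots,p_r\in\R[\xx]$, the resulting equalities $p_j(\xx)=1$ and $p_i(\xx)=0$ for $i\leq r,\,i\neq j$ are genuinely real, and the argument goes through on every real piece of $S_{KKT}$.
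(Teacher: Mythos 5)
Your proof is correct, and it is a genuinely different (and arguably cleaner) packaging of the argument. The paper first establishes the analogue $\Ic(V_0^{\R})=(p_1,\ldots,p_r)+\sqrt[C^{+}]{I_{grad}}$ as an identity of ideals in $\R[\zz]$, using the algebraic decomposition $q\equiv q\,(p_0+\cdots+p_s)\pmod{I_{grad}}$ for a generator $q$ of $\Ic(V_0^{\R})$, and then passes to $\R[\xx]$ by intersecting both sides with the subring; that last step silently requires the sum of ideals to split compatibly under intersection with $\R[\xx]$, which holds here only because the decomposition happens to respect $\R[\xx]$, but the paper does not make this explicit. You instead work entirely inside $\R[\xx]$: the Positivstellensatz together with Proposition \ref{prop:proj} and Lemma \ref{rem:proj} identifies $\sqrt[C^{+}]{I_{KKT}^{\xx}}$ with $\Ic(S_{KKT})$, which turns the difficult inclusion $I_{min}\subseteq(p_1,\ldots,p_r)+\sqrt[C^{+}]{I_{KKT}^{\xx}}$ into a pointwise vanishing statement on $S_{KKT}$, and the splitting $g=g\sum_{i=1}^{r}p_i+g\bigl(1-\sum_{i=1}^{r}p_i\bigr)$ together with the evaluation $p_j(\xx)=1$, $p_i(\xx)=0$ ($i\le r$, $i\neq j$) on each real piece $\pi^{\xx}(V_j^{\R})$ discharges it. In fact your $h=g\bigl(1-\sum_{i=1}^{r}p_i\bigr)$ is exactly the combination $g\,p_0+g\sum_{i>r}p_i+g\bigl(1-\sum_{i=0}^{s}p_i\bigr)$ that the paper treats piece by piece, so the two proofs agree on the underlying decomposition; your version just avoids ever having to argue about the intersection of the $\R[\zz]$-ideal sum with $\R[\xx]$. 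Both arguments rely on the same ingredients (Lemma \ref{pis}, Lemma \ref{lem:lagrange}, the Positivstellensatz, Lemma \ref{rem:proj}), and both correctly address the fact that the $p_i$ for $i>r$ may have complex coefficients.
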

\begin{proof}
First of all, we proof that $I_{min}^{\zz}=(p_1,...,p_r) + \sqrt[C^{+}]{I_{grad}}= (p_1,...,p_r) + \sqrt[\R]{I_{grad}}$.\\
Using the decomposition of Lemma \ref{pis} and the polynomials $p_{i}$
of Lemma \ref{lem:lagrange}, we have
$$
V^{\R}_{{grad}} = (V_0 \cup V_1 \cup \cdots \cup V_s) \cap \R^{n+n_1+2\,n_2} = V_0^{\R}\cup \cdots \cup V_r^{\R},
$$ 
By construction, $\Ic(V_0^{\R})=I_{min}^{\zz}$, $p_i(V_0^{\R})=0$ for $i=1,\ldots, s$  and 
$p_i \in \R[\xx]$ for $i=0,\ldots,r$. This implies that $p_{i}\in
I_{min}^{\zz}$ for $i=1, \ldots, r$.

As $V_{0}^{\R} \subset V_{grad}^{\R}$, we also have $\sqrt[C^{+}]{I_{grad}} \subset I_{min}^{\zz}$.

We have proved so far that $(p_{1}, \ldots , p_{r}) + \sqrt[C^{+}]{I_{grad}} \subset I_{min}^{\zz}$.
In order to prove the reverse inclusion, we denote by $q_{1}, \ldots, q_{m}$ a  family of generators of the ideal $I_{min}^{\zz}$. Take one of these
 generators $q_{j}$ ($1 \le j \le m$).
By construction, $q_j\,p_0 ( {V_0^{\R}} ) =0$ and 
 $ q_jp_0 ( {V_i^{\R}} ) =0 $  for $i=1,\ldots,r$, which implies that $q_jp_0 \in \sqrt[C^{+}]{I_{grad}}$. 

By Lemma \ref{lem:lagrange}, we have the decomposition
$$
q_j \equiv q_j(p_0 +p_1+\cdots+p_s) \mod I_{grad} \subset \sqrt[C^{+}]{I_{grad}}.
$$ 
Moreover  $(p_{r+1}+ \cdots + p_{s}) \in \R[\zz]$ and vanishes on
$V_{k}^{\R}$ for $k=0, \ldots, r$. Thus $(p_{r+1}+ \cdots + p_{s}) \in
\sqrt[C^{+}]{I_{grad}}$ and we deduce that $q_{j} \in (p_{1}, \ldots, p_{r}) + \sqrt[C^{+}]{I_{grad}}$. This
proves the other inclusion and the first equality.

As $V^{\R}_{grad}= V_{grad}^{\R} \cap \Sc^{+} (C)$ (Remark \ref{rem:proj}),
by the Positivstellensatz,  $\sqrt[C^{+}]{I_{grad}}=
\sqrt[\R]{I_{grad}}$, which proves the second equality. 

By the Positivstellensatz and Remark \ref{rem:proj},  we have
$$
\sqrt[C^{+}]{I_{grad}}\cap \R[\xx]=\sqrt[\R]{I_{grad}} \cap \R[\xx]
=\Ic(\pi^{\xx}(V^{\R}_{grad}))=\Ic(\pi^{\xx}(V_{KKT})^{\R} \cap
\Sc^{+}(C))=\sqrt[C^{+}]{I_{KKT}^{\xx}}.
$$
and
$$ 
I_{min}
=I_{min}^{\zz} \cap \R[\xx]=(p_1,...,p_r)\cap \R[\xx] + \sqrt[C^{+}]{I_{grad}}\cap
\R[\xx] 
= (p_1,...,p_r)+\sqrt[C^{+}]{I_{KKT}^{\xx}}.
$$
which proves the equality.
\end{proof}




\begin{thm}\label{cminker}
 For $C\subset \R[\xx]$ with $ S_{min} \subset \Sc (C) \subset V_{KKT}^{\xx,\R}$,
there exists $t_2 \in \N$ such that
$\forall t\ge t_2$, for $\Lambda^* \in \Lc_{t} (C)$
with $\Lambda^{*} =f^{\mu}_{t,C}$, we have $I_{min} \subset (\ker M_{\Lambda^*}^t)$.
\end{thm}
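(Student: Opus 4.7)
The plan is to assemble the result from three pieces already in hand: the ideal-theoretic description of $I_{min}$ from Proposition \ref{gminequal}, the membership $p_i \in \ker M^t_{\Lambda^*}$ from Theorem \ref{cmayor}(2), and the radical-containment property of Proposition \ref{radical}. Concretely, Proposition \ref{gminequal} gives the decomposition
\[
I_{min} = (p_{1}, \ldots, p_{r}) + \sqrt[C^{+}]{I_{KKT}^{\xx}},
\]
so it suffices to place each of the two summands inside the ideal $(\ker M^t_{\Lambda^*})$ for $t$ large enough.

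For the first summand, I would take $t$ at least as large as a threshold $t_{1}$ coming from Theorem \ref{cmayor}(2), and also at least $\max_{i} \deg(p_{i})$; then $p_{i} \in \ker M^t_{\Lambda^*} \subset (\ker M^t_{\Lambda^*})$ for every $i=1,\ldots,r$, hence the whole ideal $(p_{1}, \ldots, p_{r})$ lies in $(\ker M^t_{\Lambda^*})$.

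For the second summand, the key observation is the inclusion $\sqrt[C^{+}]{I_{KKT}^{\xx}} \subset \sqrt[C^{+}]{C^{0}}$. By the Positivstellensatz (Theorem \ref{null}(iii)),
\[
\sqrt[C^{+}]{I_{KKT}^{\xx}} = \Ic\bigl(\Vc^{\R}(I_{KKT}^{\xx}) \cap \Sc^{+}(C)\bigr) = \Ic\bigl(V_{KKT}^{\xx,\R} \cap \Sc^{+}(C)\bigr),
\]
and $\sqrt[C^{+}]{C^{0}} = \Ic(\Sc(C))$. The hypothesis $\Sc(C) \subset V_{KKT}^{\xx,\R}$ gives $\Sc(C) \subset V_{KKT}^{\xx,\R} \cap \Sc^{+}(C)$, and reversing inclusions under $\Ic(\cdot)$ yields the desired containment. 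Then Proposition \ref{radical} (or, more precisely, its refinement Remark \ref{rem:extend} applied to a finite generating family of $\sqrt[C^{+}]{I_{KKT}^{\xx}}$) provides a second threshold $t'_{0}$ such that for all $t \ge t'_{0}$ and every $\Lambda \in \Lc_{t}(C)$ one has $\sqrt[C^{+}]{C^{0}} \subset (\ker M^t_{\Lambda})$, so in particular $\sqrt[C^{+}]{I_{KKT}^{\xx}} \subset (\ker M^t_{\Lambda^*})$.

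Taking $t_{2} = \max(t_{1}, t'_{0}, \max_{i} \deg(p_{i}))$ then delivers the theorem. The only mildly delicate point is the degree bookkeeping: one must ensure $t$ is large enough that each generator of $\sqrt[C^{+}]{I_{KKT}^{\xx}}$ fits in $\R[\xx]_{t}$ and that the Positivstellensatz certificates witnessing membership in $\sqrt[C^{+}]{C^{0}}$ lie at degree $\le 2t$; but this is exactly the degree argument already used in the proof of Proposition \ref{radical}/Remark \ref{rem:extend}, so no new ideas are required.
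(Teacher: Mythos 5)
Your proof is correct and follows essentially the same route as the paper, which itself just states that the result is obtained by combining Proposition \ref{gminequal} (the decomposition $I_{min}=(p_1,\ldots,p_r)+\sqrt[C^{+}]{I_{KKT}^{\xx}}$), Theorem \ref{cmayor}(2) (to place the $p_i$ in $\ker M_{\Lambda^*}^t$), and Proposition \ref{radical}. Your write-up helpfully makes explicit the step the paper leaves unsaid, namely the containment $\sqrt[C^{+}]{I_{KKT}^{\xx}}\subset\sqrt[C^{+]{C^{0}}$ deduced via the Positivstellensatz from the hypothesis $\Sc(C)\subset V_{KKT}^{\xx,\R}$, and your extra $\max_i\deg(p_i)$ in the threshold is redundant (already absorbed into $t_1$) but harmless.
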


\begin{proof}
 To prove the inclusion we take $t_2=\max\{t_0,t_1\}$ and we combine
 Proposition \ref{gminequal} with Proposition \ref{radical} for $C\subset \R[\xx]$ and Theorem \ref{cpi-kernel}.
\end{proof}

We introduce now the notion of {\em optimal linear form for $f$}. Such
a linear form allows us to compute $I_{min}$.

\begin{prop}\label{generic}
 For $\Lambda^* \in \Lc_{t} (C)$ and $p \in \R[\xx]$, the following assertions are equivalent:
 \begin{itemize}
  \item[(i)] $\rank M_{\Lambda^*}^{t} = \max_{\Lambda \in \Lc_{t} (C),\Lambda(p)=p^{\mu}_{t,C}} \rank M_{\Lambda}^{t}$.
  \item[(ii)]  $\forall \Lambda \in \Lc_{t} (C)$
    such that $\Lambda(p)=p^{\mu}_{t,C}$, $\ker M_{\Lambda^*}^{t} \subset \ker M_{\Lambda}^{t}$.
 \end{itemize}
 We say that  $\Lambda^* \in \Lc_{t} (C)$ is
 optimal for $p$ if it satisfies one of the equivalent conditions (i)-(ii).
 \end{prop}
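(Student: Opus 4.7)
My plan is to exploit the convexity of the set
\[
\Lc_{t}^{p}(C):=\{\Lambda\in\Lc_{t}(C)\mid \Lambda(p)=p^{\mu}_{t,C}\}
\]
together with the positive-semidefiniteness of the truncated moment matrices $M_{\Lambda}^{t}$. First I would observe that $\Lc_{t}^{p}(C)$ is convex: if $\Lambda,\Lambda'\in\Lc_{t}^{p}(C)$ and $\lambda\in[0,1]$, then $\Lambda_{\lambda}=\lambda\Lambda+(1-\lambda)\Lambda'$ still satisfies $\Lambda_{\lambda}(1)=1$, $\Lambda_{\lambda}(q)\ge 0$ for every $q\in\Pc_{t}(C)$ (since $\Pc_{t}(C)$ is a convex cone), and $\Lambda_{\lambda}(p)=p^{\mu}_{t,C}$.

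Next I would record the two kernel facts, extended from the $C=\{0\}$ case stated earlier in Section~2: for every $\Lambda\in\Lc_{t}(C)$ the operator $M_{\Lambda}^{t}$ is PSD, because $q^{2}\in\Qc^{+}_{t}\subseteq\Pc_{t}(C)$ so $\Lambda(q^{2})\ge 0$ for all $q\in\R[\xx]_{t}$; and then by Cauchy--Schwarz applied to the positive semi-definite bilinear form $(q,r)\mapsto\Lambda(qr)$, the vanishing $\Lambda(q^{2})=0$ implies $q\in\ker M_{\Lambda}^{t}$. Consequently, for any $\Lambda,\Lambda'\in\Lc_{t}(C)$,
\[
\ker M_{\Lambda+\Lambda'}^{t}=\ker M_{\Lambda}^{t}\cap\ker M_{\Lambda'}^{t},
\]
because $(\Lambda+\Lambda')(q^{2})=0$ iff $\Lambda(q^{2})=\Lambda'(q^{2})=0$.

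To prove (i)$\Rightarrow$(ii), I would pick any $\Lambda\in\Lc_{t}^{p}(C)$ and form $\Lambda''=\tfrac{1}{2}(\Lambda^{*}+\Lambda)\in\Lc_{t}^{p}(C)$. By the kernel identity, $\ker M_{\Lambda''}^{t}=\ker M_{\Lambda^{*}}^{t}\cap\ker M_{\Lambda}^{t}\subseteq\ker M_{\Lambda^{*}}^{t}$, so $\rank M_{\Lambda''}^{t}\ge\rank M_{\Lambda^{*}}^{t}$. The maximality of $\rank M_{\Lambda^{*}}^{t}$ forces equality of ranks, hence equality of kernels, i.e.\ $\ker M_{\Lambda^{*}}^{t}\subseteq\ker M_{\Lambda}^{t}$. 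Conversely, for (ii)$\Rightarrow$(i), pick any maximizer $\Lambda_{\max}\in\Lc_{t}^{p}(C)$ of the rank; by (ii) applied to $\Lambda_{\max}$ we get $\ker M_{\Lambda^{*}}^{t}\subseteq\ker M_{\Lambda_{\max}}^{t}$, so $\rank M_{\Lambda^{*}}^{t}\ge\rank M_{\Lambda_{\max}}^{t}$, proving that $\Lambda^{*}$ itself attains the maximum.

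The argument is essentially an application of the kernel-intersection property of sums of PSD forms, and there is no serious obstacle; the only mild care needed is to justify the two kernel facts inside $\Lc_{t}(C)$ (rather than only for $C=\{0\}$ where they are stated in Section~2), which is immediate from $\Qc^{+}_{t}\subseteq\Pc_{t}(C)$ and Cauchy--Schwarz.
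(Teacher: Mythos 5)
Your proof is correct, and since the paper itself defers the proof to Proposition~4.7 of \cite{lasserre:hal-00651759} rather than giving one, the fair comparison is with that reference; your argument is exactly the standard convexity--plus--PSD--kernel argument used there. The key ingredients you identify are the right ones: $\Lc_t^p(C)$ is a convex (indeed a face of $\Lc_t(C)$), $M_\Lambda^t\succeq 0$ for all $\Lambda\in\Lc_t(C)$ because squares lie in $\Pc_t(C)$, and for PSD Hankel operators the kernel of an average is the intersection of the kernels (via $\Lambda(q^2)=0\Leftrightarrow q\in\ker M_\Lambda^t$ by Cauchy--Schwarz). The (i)$\Rightarrow$(ii) step correctly combines the rank--nullity count with the inclusion $\ker M_{(\Lambda^*+\Lambda)/2}^t\subseteq\ker M_{\Lambda^*}^t$, and (ii)$\Rightarrow$(i) is a one-line comparison against any rank maximizer (which exists because rank is integer-valued and bounded on a nonempty feasible set). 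The only thing worth stating explicitly for completeness is that the hypothesis implicitly requires $\Lambda^*\in\Lc_t^p(C)$, i.e.\ $\Lambda^*(p)=p^\mu_{t,C}$ --- which you use tacitly when you form the average $\tfrac12(\Lambda^*+\Lambda)$ and claim it stays in $\Lc_t^p(C)$ --- but this is clearly the intended reading of the statement.
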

A proof of this proposition can be found in \cite{lasserre:hal-00651759}(Proposition 4.7).
 
\begin{remark}
A linear form $\Lambda^* \in \Lc_{t} (C)$
 optimal for $p$ can be computed by solving a Semi-Definite
 Programming problem by an interior point method
 \cite{Lasserre:book}. In this case, the solution $\Lambda^{*}$
 obtained by convex optimization is in the interior of the face
of linear forms that minimize $f$.
\end{remark}

The next result, which refines Theorem \ref{cminker}, shows that only
elements in $I_{min}$ are involved in the kernel of a truncated
Hankel operator associated to an optimal linear form for $f$.

\begin{thm}\label{ckermin}
 Let $t \in \N$ such that $f \in \R[\xx]_{2t}$ and let $C \subset \R[\xx]_{2t}$  with $S_{min}\subset
\Sc (C)$. If $\Lambda^* \in \Lc_{t} (C)$ 
is optimal for $f$ and such that $\Lambda^{*} (f) = f^{*}$, then $\ker
M_{\Lambda^*}^{t} \subset I_{min}$.  
\end{thm}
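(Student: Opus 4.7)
The plan is to show that any $p \in \ker M_{\Lambda^{*}}^{t}$ vanishes pointwise on $S_{min}$, so that $p \in \Ic(S_{min}) = I_{min}$. The key device is to average $\Lambda^{*}$ with the evaluation functional at a minimizer and then exploit the optimality condition of Proposition~\ref{generic}.

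First I would verify that for every $\xx^{*} \in S_{min}$, the evaluation $\mathbf{1}_{\xx^{*}} : p \mapsto p(\xx^{*})$ restricted to $\R[\xx]_{2t}$ lies in $\Lc_{t}(C)$. Since $S_{min} \subset \Sc(C)$, we have $c^{0}(\xx^{*})=0$ and $c^{+}(\xx^{*})\geq 0$ for all constraints in $C$, so $q(\xx^{*}) \geq 0$ for every $q \in \Pc_{t}(C)$, and clearly $\mathbf{1}_{\xx^{*}}(1)=1$. Moreover $\mathbf{1}_{\xx^{*}}(f) = f(\xx^{*}) = f^{*}$ by definition of $S_{min}$.

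Next, form the convex combination $\Lambda = \tfrac{1}{2}\Lambda^{*} + \tfrac{1}{2}\mathbf{1}_{\xx^{*}}$. Since $\Lc_{t}(C)$ is convex, $\Lambda \in \Lc_{t}(C)$, and $\Lambda(f) = \tfrac{1}{2}f^{*} + \tfrac{1}{2}f^{*} = f^{*} = f^{\mu}_{t,C}$ (using that $\Lambda^{*}$ achieves the minimum). By the optimality of $\Lambda^{*}$ for $f$ (Proposition~\ref{generic}(ii)), we obtain $\ker M_{\Lambda^{*}}^{t} \subset \ker M_{\Lambda}^{t}$.

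Now pick any $p \in \ker M_{\Lambda^{*}}^{t}$. Then $p \in \ker M_{\Lambda}^{t}$ as well, so in particular $\Lambda(p^{2}) = M_{\Lambda}^{t}(p)(p) = 0$. On the other hand, computing directly,
\[
\Lambda(p^{2}) = \tfrac{1}{2}\Lambda^{*}(p^{2}) + \tfrac{1}{2}\mathbf{1}_{\xx^{*}}(p^{2}) = 0 + \tfrac{1}{2}\,p(\xx^{*})^{2},
\]
where $\Lambda^{*}(p^{2}) = M_{\Lambda^{*}}^{t}(p)(p) = 0$ since $p \in \ker M_{\Lambda^{*}}^{t}$. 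Thus $p(\xx^{*})^{2}=0$, hence $p(\xx^{*})=0$. As $\xx^{*}$ was arbitrary in $S_{min}$, $p$ vanishes on $S_{min}$, so $p \in \Ic(S_{min}) = I_{min}$, which gives $\ker M_{\Lambda^{*}}^{t} \subset I_{min}$.

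This proof is essentially direct once the convex-combination trick is set up; there is no significant obstacle. The only point that requires care is checking the admissibility of $\mathbf{1}_{\xx^{*}}$ in $\Lc_{t}(C)$ (which uses the positivity of preordering elements at points of $\Sc(C)$) and invoking the correct characterization of optimality from Proposition~\ref{generic}.
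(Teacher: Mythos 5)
Your proof is correct. The paper itself only cites an external reference for this result, but the convex-combination trick you use (averaging the optimal linear form $\Lambda^*$ with the evaluation $\mathbf{1}_{\xx^*}$ at a minimizer, observing that the average still minimizes $\Lambda(f)$, and then invoking the rank-maximality characterization of optimality from Proposition~\ref{generic} to conclude $\Lambda^*(p^2)+p(\xx^*)^2=0$) is exactly the standard argument underlying Theorem~4.9 of the cited work, so your approach matches the intended one.
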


\begin{proof}
It is similar to proof of Theorem 4.9 in \cite{lasserre:hal-00651759}.
\end{proof}

The last result of this section shows that an optimal linear form for $f$
yields the generators of the minimizer ideal $I_{min}$ in high enough degree.

\begin{thm}\label{cthprin}
Let $\gb \subset \R[\xx]$ be a set of constraints with $S_{min}\neq \emptyset$.
 For a set of constraints $C\subset \R[\xx]$ with $S_{min} \subset \Sc (C) \subset V_{KKT}^{\xx,\R}$,
there exists $t_2\in \N$ (defined in Theorem \ref{cminker}) such that
$\forall t\ge t_2$, 
\begin{itemize}
\item $f^{\mu}_{t,C}= \min _{\xx\in S_{KKT}} f (\xx) $ is reached for some $\Lambda^* \in
  \Lc_{t} (C)$, 
 \item $\forall \Lambda^* \in \Lc_{t} (C)$ optimal for $f$, we have $\Lambda^*(f)=f^*$ and
$(\ker M_{\Lambda^*}^t)=I_{min}$,
 \item if $\Vc(C^{0}) \subset V_{KKT}^{\xx}$ then $f^{sos}_{t,C} = f^{\mu}_{t,C} = f^{*}$.
\end{itemize} 
\end{thm}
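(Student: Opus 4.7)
The plan is to assemble the three bullet points from results already established in Sections 4 and 5. The first and third bullets follow directly from Theorem \ref{cmayor}: the hypothesis $S_{min}\subset \Sc(C)\subset V^{\xx,\R}_{KKT}$ is exactly what is required there, and moreover the extra assumption $\Vc(C^{0})\subset V^{\xx}_{KKT}$ of Theorem \ref{cmayor}(3) matches the extra assumption of the third bullet. So by taking $t_{2}$ at least as large as the index $t_{1}$ from Theorem \ref{cmayor} (in addition to the $t_{2}$ from Theorem \ref{cminker}), we get $f^{\mu}_{t,C}=f^{*}$ attained by some $\Lambda^{*}\in \Lc_{t}(C)$ and, under the additional hypothesis, the no-duality-gap statement $f^{sos}_{t,C}=f^{\mu}_{t,C}=f^{*}$.

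For the middle bullet, fix $t\ge t_{2}$ and let $\Lambda^{*}\in \Lc_{t}(C)$ be optimal for $f$ in the sense of Proposition \ref{generic}. By the definition of optimality, $\Lambda^{*}(f)=f^{\mu}_{t,C}$, and by the first bullet this common value equals $f^{*}$. Thus $\Lambda^{*}$ satisfies the hypothesis of Theorem \ref{cminker}, giving the inclusion
\[
I_{min}\ \subset\ (\ker M_{\Lambda^{*}}^{t}).
\]
Moreover, $\Lambda^{*}$ satisfies the hypothesis of Theorem \ref{ckermin} (taking $t_{2}$ large enough that $f\in \R[\xx]_{2t_{2}}$ and $C\subset \R[\xx]_{2t_{2}}$), which yields
\[
\ker M_{\Lambda^{*}}^{t}\ \subset\ I_{min}.
\]
Since $I_{min}$ is an ideal, the second inclusion upgrades to $(\ker M_{\Lambda^{*}}^{t})\subset I_{min}$, and combining with the first inclusion gives the equality $(\ker M_{\Lambda^{*}}^{t})=I_{min}$ asserted in the theorem.

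The only non-routine step is the degree bookkeeping: we must pick a single $t_{2}$ that simultaneously dominates the thresholds coming from Proposition \ref{radical}, Theorem \ref{cmayor}, Theorem \ref{cminker}, and Theorem \ref{ckermin} (and, under the hypothesis of the third bullet, from Theorem \ref{epsilonth} used inside the proof of Theorem \ref{cmayor}(3)), while also ensuring $f\in \R[\xx]_{2t_{2}}$ and $C\subset \R[\xx]_{2t_{2}}$ so that the truncated Hankel operator actually sees all of $\ker M_{\Lambda^{*}}^{t}$. Once $t_{2}:=\max$ of all those indices is chosen, every invocation above is valid at every $t\ge t_{2}$, and the theorem follows by the assembly just described; no new analytic or algebraic content is introduced.
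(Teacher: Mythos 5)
Your proposal is correct and follows essentially the same route as the paper: the paper's proof simply cites Theorem~\ref{cmayor}, Theorem~\ref{cminker}, and Theorem~\ref{ckermin} as the ingredients, exactly as you assemble them. Your added remark on tracking a single threshold $t_{2}$ that dominates all the constituent bounds (including the requirement $f\in\R[\xx]_{2t_{2}}$, $C\subset\R[\xx]_{2t_{2}}$ needed for Theorem~\ref{ckermin}) is a useful explicit spelling-out of what the paper leaves implicit.
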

\begin{proof}
We obtain the result as a consequence of Theorem \ref{cmayor}, Theorem \ref{cminker}
and Theorem \ref{ckermin}.
\end{proof}
The same results hold if we replace $C$ by any other finite set
defining a {\em  real} variety such that $S_{min} \subset \Sc (C) \subset V_{KKT}^{\xx,\R}$.
\begin{remark}\label{rem:consequence}
We can also replace the initial set of constraints $\gb$ by any other
set  $\tilde{\gb}$ defining the same semi-algebraic set $S=\Sc (\gb)=
\Sc (\tilde{\gb})$ and consider the KKT variety associated to $\tilde{\gb}$.
\end{remark}


 
\section{Consequences}

Let us describe now some consequences of these results in specific
cases, which have been previously studied.

\subsection{Global optimization}
We consider here the case $n_{1}=n_{2}=0$. Theorem
\ref{thm:sos+rad} implies the following result (compare with \cite{NDS}):
\begin{thm}
Let $f \in \R[\xx]$. 
\begin{enumerate}
\item If the real critical values of $f$ are positive, then $f\in \Qc^{+} +  \sqrt{(\frac{\partial f}{\partial_{x_{1}}}, \ldots, \frac{\partial f}{\partial_{x_{n}}})}$. 
\item If the real critical values of $f$ are strictly positive, then $f\in \Qc^{+} +  (\frac{\partial f}{\partial_{x_{1}}}, \ldots, \frac{\partial f}{\partial_{x_{n}}})$.
\end{enumerate} 
\end{thm}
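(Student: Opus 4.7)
The plan is to derive both statements as direct specializations of Theorem \ref{thm:sos+rad} to the unconstrained setting $n_{1}=n_{2}=0$. First I would unwind what the various objects become in this degenerate case. With no equality or inequality constraints, $\gb=\emptyset$ and $S=\R^{n}$; the multiplier variables $\uu,\vv$ and slack variables $\s$ disappear, so $\zz=\xx$ and $F=f$. Consequently the gradient ideal and the KKT ideal both coincide with the usual gradient ideal $(\partial f/\partial x_{1},\ldots,\partial f/\partial x_{n}) \subset \R[\xx]$, which is already equal to its own $\xx$-projection. The KKT point set $S_{KKT}$ reduces to the set of real critical points of $f$, and the real KKT critical values $f_{0}<\cdots<f_{r}$ introduced earlier are exactly the real critical values of $f$.

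Next I would take the set of constraints $C$ to be empty, which trivially satisfies the hypothesis $\Sc^{+}(C)=\R^{n}=\Sc^{+}(\gb)$ of Theorem \ref{thm:sos+rad}. With this choice, the preordering generated by the positive constraints degenerates to the cone of sums of squares, $\Pc^{+}(C)=\Qc^{+}$. Taking $J_{grad}=I_{grad}$ (which satisfies $\Vc(J_{grad})=V_{grad}$), we get $J_{grad}^{\xx}=(\partial f/\partial x_{1},\ldots,\partial f/\partial x_{n})$.

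With these identifications, both assertions follow immediately. For (1), the hypothesis that the real critical values of $f$ are positive translates to $f_{i}\ge 0$ for $i=0,\ldots,r$, i.e.\ $f\ge 0$ on $S_{KKT}$; Theorem \ref{thm:sos+rad}(2) then delivers $f\in\Qc^{+}+\sqrt{(\partial f/\partial x_{1},\ldots,\partial f/\partial x_{n})}$. For (2), the hypothesis that the real critical values are strictly positive is $f>0$ on $S_{KKT}$, and Theorem \ref{thm:sos+rad}(3) yields $f\in\Qc^{+}+(\partial f/\partial x_{1},\ldots,\partial f/\partial x_{n})$. There is no serious obstacle: the entire content lies in Theorem \ref{thm:sos+rad}, and the proof amounts to verifying that the definitions collapse correctly when no constraints are present, together with pointing out that the two inequality hypotheses on real critical values are precisely the positivity hypotheses on $S_{KKT}$ required by parts (2) and (3) of that theorem.
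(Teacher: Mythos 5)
Your proposal is correct and follows exactly the route the paper intends: the theorem is stated as a direct specialization of Theorem~\ref{thm:sos+rad} to the unconstrained case $n_{1}=n_{2}=0$, with $C=\emptyset$ so that $\Pc^{+}(C)=\Qc^{+}$ and $J_{grad}^{\xx}=(\partial f/\partial x_{1},\ldots,\partial f/\partial x_{n})$. Your unwinding of the definitions and the matching of the hypotheses on real critical values to the positivity conditions in parts (2) and (3) of Theorem~\ref{thm:sos+rad} is precisely the (implicit) argument in the paper.
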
 
In particular, if there is no real critical value, then $f \in \Qc^{+}
+  (\frac{\partial f}{\partial_{x_{1}}}, \ldots, \frac{\partial
  f}{\partial_{x_{n}}})$. 
 
A consequence of Proposition \ref{prop:empty} and Theorem \ref{cthprin} is the following: 
\begin{thm}
Let $f \in \R[\xx]$ and $C=\{\frac{\partial f}{\partial_{x_{1}}}, \ldots, \frac{\partial f}{\partial_{x_{n}}} \}$. Then, there exists $t_{0}\in \N$, such that $\forall
t\geq t_{0}$  either $\Lc_{t} (C)=\emptyset$ and $S_{min}=\emptyset$ or
\begin{enumerate}
 \item $f^{sos}_{t,C} = f^{\mu}_{t,C} = f^{*} = \min_{\xx \in \R^{n}}
   f (\xx)$ is reached for some $\Lambda^* \in \Lc_{t} (C)$,
 \item $\forall \Lambda^* \in \Lc_{t} (C)$
optimal for $f$, $\ker M_{\Lambda^*}^t$ generates $I_{min}$.
\end{enumerate}
\end{thm}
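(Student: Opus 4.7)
The plan is to specialize Proposition \ref{prop:empty} and Theorem \ref{cthprin} to the unconstrained case $n_{1}=n_{2}=0$. In this setting $S=\R^{n}$, the original constraint set $\gb$ is empty, and the KKT ideal reduces to $I_{KKT}=(\partial_{1}f,\ldots,\partial_{n}f)\subset \R[\xx]$, so that $V^{\xx}_{KKT}=\Vc(\nabla f)$ and $V^{\xx,\R}_{KKT}=\Vc(\nabla f)\cap \R^{n}$ is the set $S_{KKT}$ of real critical points of $f$. The choice $C=\{\partial_{1}f,\ldots,\partial_{n}f\}$ gives $C^{0}=\{\partial_{1}f,\ldots,\partial_{n}f\}$ and $C^{+}=\emptyset$, and one checks immediately that $\Vc(C^{0})=V^{\xx}_{KKT}$ as varieties, $\Sc(C)=V^{\xx,\R}_{KKT}=S_{KKT}$, and $\Sc^{+}(C)=\R^{n}=\Sc^{+}(\gb)$. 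In particular the inclusions $S_{min}\subset \Sc(C)\subset V^{\xx,\R}_{KKT}$ and $\Vc(C^{0})\subset V^{\xx}_{KKT}$ required by the general results hold trivially (as equalities), and $C^{+}=\gb^{+}=\emptyset$.

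With these identifications, the first alternative of the statement is the direct application of Proposition \ref{prop:empty}: its hypotheses are all satisfied, so there exists a threshold $t_{a}\in \N$ such that for all $t\geq t_{a}$ one has $\Lc_{t}(C)=\emptyset$ if and only if $S_{min}=\emptyset$. This takes care of the case where no KKT minimizer exists.

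For the second alternative, assume $S_{min}\neq \emptyset$ and apply Theorem \ref{cthprin} to the same $C$. All three of its hypotheses hold, so it produces a threshold $t_{b}\in \N$ (the $t_{2}$ of that theorem) such that for every $t\geq t_{b}$: first, $f^{\mu}_{t,C}=f^{*}=\min_{\xx\in S_{KKT}}f(\xx)$ is reached by some $\Lambda^{*}\in \Lc_{t}(C)$; second, the extra condition $\Vc(C^{0})\subset V^{\xx}_{KKT}$ forces $f^{sos}_{t,C}=f^{\mu}_{t,C}=f^{*}$; and third, for every $\Lambda^{*}\in \Lc_{t}(C)$ optimal for $f$, $(\ker M^{t}_{\Lambda^{*}})=I_{min}$. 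Setting $t_{0}=\max(t_{a},t_{b})$ gives the threshold demanded by the statement.

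The only point outside the reach of the two cited results is the identification of $f^{*}$ with $\min_{\xx\in \R^{n}}f(\xx)$. This is elementary in the unconstrained case: Fermat's rule shows that any global minimizer of $f$ on $\R^{n}$ satisfies $\nabla f=0$ and hence lies in $S_{KKT}$, so whenever the global minimum is achieved one has $\min_{\xx\in \R^{n}}f(\xx)=\min_{\xx\in S_{KKT}}f(\xx)=f^{*}$. There is no real obstacle here; the substantial work was already carried out in Proposition \ref{prop:empty} and Theorem \ref{cthprin}, and the proof is just a verification that the hypotheses specialize correctly.
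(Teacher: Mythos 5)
Your proposal is correct and follows essentially the same route as the paper, which simply records this theorem as ``a consequence of Proposition \ref{prop:empty} and Theorem \ref{cthprin}'' without a separate argument; you supply the missing verification that $C=\{\partial_1 f,\dots,\partial_n f\}$ with $C^{+}=\emptyset=\gb^{+}$ satisfies the hypotheses of both results, and that $\Sc(C)=V^{\xx,\R}_{KKT}=S_{KKT}$ and $\Vc(C^{0})=V^{\xx}_{KKT}$ hold as equalities. One caveat worth flagging more forcefully than you do: the final identification $f^{*}=\min_{\xx\in\R^{n}}f(\xx)$ genuinely requires that $f$ attain its infimum on $\R^{n}$, and $S_{min}\neq\emptyset$ alone does not guarantee this --- a polynomial such as $f(x,y)=x^2+(xy-1)^2$ has $S_{KKT}=\{(0,0)\}$ and hence $S_{min}\neq\emptyset$ with $f^{*}=1$, while $\inf_{\R^2}f=0$ is never attained. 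The theorem as printed implicitly carries the paper's standing hypothesis that the global infimum is reached (cf.\ the abstract), so your reading ``whenever the global minimum is achieved'' is the right one; it would be cleaner to state explicitly that without this hypothesis the relaxation still converges to the minimal critical value $f^{*}$, which may differ from $\inf_{\R^n}f$.
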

The first point of this theorem can also be found in \cite{NDS}. 

\subsection{General case}
 
A direct consequence of Proposition \ref{prop:empty} and Theorem \ref{cthprin} is the following:
\begin{thm}\label{thm:exact}
Let $C\subset \R[\xx]$ be a set of constraints such that 
\begin{itemize}
 \item $(C^{0})= I_{KKT} \cap \R[\xx]$,
 \item $C^{+}=\gb^{+}$.
\end{itemize}
Then there exists $t_0\in \N$ such that $\forall t\ge t_0$, either
$\Lc_{t} (C)=\emptyset$ and $S_{min}=\emptyset$ or
\begin{itemize}
\item $f^{sos}_{t,C}=f^{\mu}_{t,C}= min _{\xx \in S_{KKT}} f (\xx)$ is reached for some $\Lambda^* \in \Lc_{t} (C)$, 
 \item $\forall \Lambda^* \in \Lc_{t} (C)$ optimal for $f$, we have $\Lambda^*(f)=f^*$ and
$(\ker M_{\Lambda^*}^t)=I_{min}$. 
\end{itemize} 
\end{thm}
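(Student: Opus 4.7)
The plan is to reduce the statement to a direct combination of Proposition \ref{prop:empty} and Theorem \ref{cthprin}, the only real work being the verification that the specific set of constraints $C$ chosen in the hypothesis satisfies the standing assumptions of both results, so that their thresholds can be merged into a single $t_0$.

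First, I would unpack what $C$ looks like. Since $(C^{0})=I_{KKT}\cap \R[\xx]=I_{KKT}^{\xx}$, the elimination/closure theorem gives
\[
\Vc(C^{0})=\Vc(I_{KKT}^{\xx})=\overline{\pi^{\xx}(V_{KKT})}=V^{\xx}_{KKT},
\]
so in particular $\Vc(C^{0})\subset V^{\xx}_{KKT}$. Taking real points, $\Vc^{\R}(C^{0})=V^{\xx,\R}_{KKT}$, and combining with $C^{+}=\gb^{+}$ (hence $\Sc^{+}(C)=\Sc^{+}(\gb)$) yields
\[
\Sc(C)=\Vc^{\R}(C^{0})\cap \Sc^{+}(C)=V^{\xx,\R}_{KKT}\cap \Sc^{+}(\gb)=S_{KKT}.
\]
Consequently $S_{min}\subset S_{KKT}=\Sc(C)\subset V^{\xx,\R}_{KKT}$.

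Next, I would apply Proposition \ref{prop:empty} to this $C$: its three hypotheses ($S_{min}\subset \Sc(C)$, $\Vc(C^{0})\subset V^{\xx}_{KKT}$, $C^{+}=\gb^{+}$) have just been verified, so it yields some $t_0'$ such that for all $t\ge t_0'$, $\Lc_{t}(C)=\emptyset$ if and only if $S_{min}=\emptyset$. This takes care of the dichotomy in the statement: when $S_{min}=\emptyset$ and $t\ge t_0'$, we are in the first alternative and there is nothing more to prove.

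Finally, when $S_{min}\neq\emptyset$, I would invoke Theorem \ref{cthprin} with the same $C$: its hypotheses $S_{min}\subset \Sc(C)\subset V^{\xx,\R}_{KKT}$ and $\Vc(C^{0})\subset V^{\xx}_{KKT}$ were both checked above, so it produces a threshold $t_2$ and gives simultaneously that $f^{\mu}_{t,C}=\min_{\xx\in S_{KKT}}f(\xx)=f^{*}$ is attained at some $\Lambda^{*}\in \Lc_{t}(C)$, that every optimal $\Lambda^{*}$ satisfies $(\ker M_{\Lambda^{*}}^{t})=I_{min}$, and that $f^{sos}_{t,C}=f^{\mu}_{t,C}=f^{*}$ (the last point using exactly the inclusion $\Vc(C^{0})\subset V^{\xx}_{KKT}$). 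Setting $t_{0}:=\max(t_{0}',t_{2})$ then gives a single threshold for which both alternatives in the theorem hold as stated. There is no real obstacle here beyond the bookkeeping of inclusions: the heavy lifting has already been done in Proposition \ref{prop:empty} and Theorem \ref{cthprin}, and the point of the statement is mainly to record that the canonical choice $(C^{0})=I_{KKT}^{\xx}$, $C^{+}=\gb^{+}$ is automatically admissible for both.
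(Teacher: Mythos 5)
Your proposal is correct and follows exactly the route the paper takes: the paper presents the statement as ``a direct consequence of Proposition~\ref{prop:empty} and Theorem~\ref{cthprin},'' with the remark that $C^{0}$ is constructed so that $\Vc(C^{0})=V_{KKT}^{\xx}$, and you have simply spelled out the hypothesis-checking and the merging of the two thresholds. One tiny imprecision: you assert $\Sc(C)=S_{KKT}$, whereas a priori one only gets $S_{KKT}=\pi^{\xx}(V_{KKT}^{\R})\cap\Sc^{+}(\gb)\subset \overline{\pi^{\xx}(V_{KKT}^{\R})}\cap\Sc^{+}(\gb)=\Sc(C)$ since a real projection need not be Zariski closed; but the chain of inclusions $S_{min}\subset S_{KKT}\subset\Sc(C)\subset V_{KKT}^{\xx,\R}$ is all that is needed and the argument goes through unchanged.
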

The set $C^{0}$ is constructed so that $\Vc (C^{0}) =
V_{KKT}^{\xx}$. As we have seen, the weaker condition $S_{min}\subset
\Sc(C) \subset V_{KKT}^{\xx}$
is sufficient to have an exact relaxation sequence. 

The generators $C^{0}$ of $I_{KKT}\cap \R[\xx]$ can be computed by 
elimination techniques (for instance by Groebner basis computation with 
a product order on monomials \cite{CLO97}).

\subsection{Regular case} 
We consider here a semi-algebraic set $S$ such that its defining constraints
intersect properly.
For any $\xx\in \C^{n}$, let $\nu (\xx)=\{ j\in [1, n_{2}] \mid g_{j}^{+} (\xx) =0 \}$.

\begin{defn}
We say that a set of constraints $\gb=(g_{1}^{0}, \ldots g_{n_{1}}^{0}$; $g_{1}^{+}, \ldots, g_{n_{2}}^{+})$ is regular if for all points $\xx\in \Sc (\gb)$
with $\nu (\xx)=\{j_{1}, \ldots, j_{k}\}$, the vectors $\nabla g_{1}^{0} (\xx), \ldots, \nabla g_{n_{1}}^{0} (\xx)$,
$\nabla g_{j_{1}}^{+} (\xx), \ldots$, $\nabla g_{j_{k}}^{+} (\xx)$
are linearly independent.
\end{defn}
This condition is used for instance in \cite{Ha-Pham:10}. It implies that $\forall \xx \in S$, $|\nu (\xx)| \le n-n_{1}$ and that $B_{\nu
  (\xx)} (\xx)$ is of rank $n_{1} + |\nu (\xx)|$. 
A stronger condition, called the $\C$-regularity, corresponds to sets
of constraints such that  $\forall \xx \in \C^{n}$, $B_{\nu (\xx)} (\xx)$ is
of rank $n_{1} + |\nu (\xx)|$.
This condition is used for instance in \cite{Nie11}. It is satisfied
for semi-algebraic sets defined by ``generic'' constraints when $n_{1}\le n$ as shown in \cite{Nie11}.

If $\gb$ is regular, then for all points $\xx$ in $S$ the rank of
$B_{\nu (\xx)} (\xx)$ is $n_{1}+|\nu (\xx)|$ and 
$S_{sing}=\emptyset$. The decomposition \eqref{eq:decomp} implies that
$S_{FJ}=S_{KKT}$ and that all minimizer points of $f$ on $S$ are KKT
points. 
If moreover $\gb$ is $\C$-regular, then $V_{FJ}^{\xx}= \Vc
( \Gamma_{FJ}) = V_{KKT}^{\xx}$.

We deduce from Theorem \ref{cthprin} the following  result:
\begin{thm}
Let $\gb\subset \R[\xx]$ be a regular set of constraints and let $C\subset \R[\xx]$ be the set of constraints such that
\begin{itemize}
 \item $C^{0}=\Gamma_{FJ}$ defined in \eqref{eq:kktproj} (resp. $C^{0}=\Phi_{FJ}$ defined in \eqref{eq:kktprojreal}),
 \item $C^{+}=\gb^{+}$.
\end{itemize}
Suppose that $\min_{\xx \in \Sc (\gb)} f (\xx)$ is reached
at some point of $\Sc (\gb)$. 
Then, there exists $t_{0}\in \N$ such that $\forall t\ge t_{0}$,
\begin{enumerate}
 \item $f^{\mu}_{t,C} = f^{*}= \min_{\xx \in \Sc (\gb)} f (\xx)$ is reached for some $\Lambda^* \in \Lc_{t} (C)$,
 \item $\forall \Lambda^* \in \Lc_{t} (C)$ optimal for $f$, $\ker M_{\Lambda^*}^t$
generates $I^{\xx}_{min}$,
  \item If $\gb$ is $\C$-regular and $C^{0}=\Gamma_{FJ}$, then $f^{sos}_{t,C}=f^{\mu}_{t,C} = f^{*}$.
\end{enumerate}

\end{thm}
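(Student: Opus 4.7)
The plan is to reduce this theorem to a direct application of Theorem \ref{cthprin}, with the main work being the verification that the hypotheses $S_{min}\neq \emptyset$ and $S_{min}\subset \Sc(C)\subset V_{KKT}^{\xx,\R}$ hold in the regular setting, together with the extra inclusion $\Vc(C^{0})\subset V_{KKT}^{\xx}$ required for point (3).

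First I would show that regularity of $\gb$ forces $S_{sing}=\emptyset$. Indeed, a point $\xx^{*}\in S_{sing}$ would come from a real point of $V_{FJ}$ with $u_{0}=0$ and some non-zero multiplier vector, which produces a non-trivial linear relation among the vectors $\nabla g_{i}^{0}(\xx^{*})$ and the $\nabla g_{j}^{+}(\xx^{*})$ for $j\in\nu(\xx^{*})$ (using the complementary slackness $v_{j}g_{j}^{+}(\xx^{*})=0$ to discard inactive constraints), contradicting the regularity hypothesis. By Proposition \ref{prop:FJ}, any minimizer of $f$ on $S$ lies in $S_{FJ}=S_{KKT}\cup S_{sing}=S_{KKT}$, so the assumption that the infimum is attained on $S$ gives $S_{min}\neq\emptyset$ and $f^{*}=\min_{S}f=\min_{S_{KKT}}f$.

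Next I would identify $\Sc(C)$ with $S_{KKT}$ in both choices of $C^{0}$. For $C^{0}=\Gamma_{FJ}$, Lemma \ref{lem:kktproj} gives $\Vc(\Gamma_{FJ})=V^{\xx}_{FJ}$; restricting to $\R^{n}$ and intersecting with $\Sc^{+}(\gb)=\Sc^{+}(C^{+})$ gives $\Sc(C)=V_{FJ}^{\xx,\R}\cap\Sc^{+}(\gb)$, and since rank conditions on real matrices can be realized over $\R$, this coincides with $\pi^{\xx}(V_{FJ}^{\R})\cap\Sc^{+}(\gb)=S_{FJ}$. The case $C^{0}=\Phi_{FJ}$ is handled identically by Remark \ref{rem:realkkt}. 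Combined with the previous paragraph, $\Sc(C)=S_{FJ}=S_{KKT}$, which gives the double inclusion $S_{min}\subset \Sc(C) = S_{KKT}\subset V_{KKT}^{\xx,\R}$. Points (1) and (2) then follow immediately from Theorem \ref{cthprin}.

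For point (3), I would upgrade the argument of the first paragraph from the real to the complex setting: $\C$-regularity means $B_{\nu(\xx)}(\xx)$ has full rank $n_{1}+|\nu(\xx)|$ for every $\xx\in \C^{n}$, so the same linear-independence argument applied over $\C$ yields $V_{sing}^{\xx}=\emptyset$. Therefore $V_{FJ}^{\xx}=V_{KKT}^{\xx}$, and with $C^{0}=\Gamma_{FJ}$ we have $\Vc(C^{0})=V_{FJ}^{\xx}=V_{KKT}^{\xx}$. This is exactly the hypothesis needed to invoke the third conclusion of Theorem \ref{cthprin}, which gives $f^{sos}_{t,C}=f^{\mu}_{t,C}=f^{*}$. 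The main obstacle I anticipate is the careful bookkeeping in the second paragraph, where one must verify that the real semi-algebraic set cut out by $\Gamma_{FJ}$ (or $\Phi_{FJ}$) together with $\gb^{+}\ge 0$ truly equals $S_{KKT}$ and not merely $S_{FJ}$; this equality uses in an essential way the regularity hypothesis to rule out spurious singular projections.
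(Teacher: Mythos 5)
Your proof is correct and follows essentially the same route as the paper: regularity gives $S_{sing}=\emptyset$ hence $S_{FJ}=S_{KKT}$, the constructions $\Gamma_{FJ}$ and $\Phi_{FJ}$ give $\Sc(C)=S_{KKT}\subset V_{KKT}^{\xx,\R}$ via Lemma \ref{lem:kktproj} and Remark \ref{rem:realkkt}, and $\C$-regularity gives $\Vc(\Gamma_{FJ})=V_{KKT}^{\xx}$ for part (3), so everything reduces to Theorem \ref{cthprin}. You have merely spelled out the verifications that the paper states in one line, including the real-lifting argument via linearity of the FJ system in the multipliers, which the paper leaves implicit.
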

By Lemma \ref{lem:kktproj} and Remark \ref{rem:realkkt}, $C$ is 
constructed so that $S_{min} \subset \Sc (C) =S_{KKT} \subset V_{KKT}^{\xx,\R}$. 

Points (1) and (3) are proved for $C^{0}=  \Gamma_{FJ}$ in
\cite{Nie11} under the condition that $\gb$ is
$\C$-regular. These points can also be found in
\cite{Ha-Pham:10} for  $C^{0}= \gb^0 \cup \Phi_{FJ}$ under the
condition that $\gb$ regular (but a problem appears in the proof: the vanishing of the polynomials $\Phi_{FJ}$ at a point $\xx \in \C^{n}$ does not imply
that $\rank\, A_{\nu (\xx)} (\xx) < n_{1} + |\nu (\xx)|$).

In this case, the relaxation constructed with $\Gamma_{FJ}$ (or $\Phi_{FJ}$) is exact and can be used to compute the minimizer ideals of $f$ on the
semi-algebraic set $S$.

\subsection{Zero dimensional real variety}\label{sec:zerodim}
Let $\gb \subset \R[\xx]$ be a set of constraints such that
$\Vc^{\R}(\gb^{0})$ is finite and let $S:= \Sc (\gb)$. By remark
\ref{rem:consequence}, we can assume that $S$ is defined by a set of
constraints $\tilde{\gb}$ such that $(\tilde{\gb}^{0})$ is radical. Then 
$\forall \xx\in \Vc (\gb^{0})=\Vc (\tilde{\gb}^{0})$, the Jacobian
matrix $\tilde{B}_{\nu (\xx)} (\xx)$ associated to $\tilde{\gb}^{0}$ is of rank $n$. Therefore
we have $\Vc (\gb^{0}) = \Vc (\tilde{\gb}^{0}) = V_{KKT}^{\xx}$ 
 and any point of $S$ is a $KKT$-point: $S=S_{FJ}=S_{KKT}$.
Consequently, we deduce from Theorem \ref{cthprin} the following result:
\begin{thm}
Let $\gb=(\gb^0,\gb^+) \subset \R[\xx]$ be a set of constraints such that 
$\Vc^{\R} (\gb^{0})$  is finite.
Then there exists $t_{0}\in \N$ such that $\forall t\ge t_{0}$,
\begin{enumerate}
 \item $f^{sos}_{t,\gb}=f^{\mu}_{t,\gb} = f^{*}= \min_{\xx \in \Sc (\gb)} f (\xx)$
is reached for some $\Lambda^* \in \Lc_{t} (\gb)$,
\item $\forall \Lambda^* \in \Lc_{t} (\gb)$ optimal for $f$, $\ker
  M_{\Lambda^*}^t$ generates $I_{min}$.
\end{enumerate}
\end{thm}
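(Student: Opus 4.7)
The plan is to apply Theorem \ref{cthprin} after first replacing $\gb$ by an equivalent set of constraints whose equality part is zero-dimensional and radical.

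First I would invoke Remark \ref{rem:consequence} to assume, without loss of generality, that $(\gb^{0})$ is a zero-dimensional radical ideal; concretely, one takes $\tilde{\gb}^{0}$ to be a finite generating family of the real radical $\sqrt[\R]{(\gb^{0})}$ and $\tilde{\gb}^{+}=\gb^{+}$. By the Real Nullstellensatz (Theorem \ref{null}(ii)), $\sqrt[\R]{(\gb^{0})}=\Ic(\Vc^{\R}(\gb^{0}))$; since $\Vc^{\R}(\gb^{0})$ is finite this is a zero-dimensional radical ideal whose complex variety equals $\Vc^{\R}(\gb^{0})$, and $\Sc(\tilde{\gb})=\Sc(\gb)=S$. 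Relabeling $\tilde{\gb}$ as $\gb$, we may therefore assume that $(\gb^{0})$ itself has these properties.

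The next step is the Jacobian criterion: for a zero-dimensional radical ideal in $\R[\xx]$, at each point of its variety the Jacobian of any generating family has full rank $n$. Hence for every $\xx^{*}\in \Vc(\gb^{0})$ the gradients $\nabla g_{1}^{0}(\xx^{*}),\ldots,\nabla g_{n_{1}}^{0}(\xx^{*})$ span $\R^{n}$, so the system $\nabla f(\xx^{*})=\sum_{i}u_{i}\nabla g_{i}^{0}(\xx^{*})$ always admits a solution in the $u_{i}$, and $\xx^{*}$ lifts to a point of $V_{KKT}$ by taking $v_{j}=0$. This yields $V_{KKT}^{\xx}=\Vc(\gb^{0})$, hence $S\subset V_{KKT}^{\xx,\R}$ and $S=S_{KKT}=S_{FJ}$. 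Since $S\subset \Vc^{\R}(\gb^{0})$ is finite, the minimum is attained on $S_{KKT}$, so $S_{min}\neq \emptyset$.

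At this point I would apply Theorem \ref{cthprin} with $C=\gb$: the hypotheses $S_{min}\subset \Sc(C)=S\subset V_{KKT}^{\xx,\R}$ are satisfied, and moreover $\Vc(C^{0})=V_{KKT}^{\xx}$, so the strengthened conclusion involving $f^{sos}_{t,C}$ applies. The three items of Theorem \ref{cthprin} then translate directly into the two items of the present theorem.

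The main delicate step is the reduction to the radical case; once that is in place, the Jacobian criterion immediately provides the KKT-geometric input required by Theorem \ref{cthprin}, and the finite convergence together with the kernel description follow formally. The subtlety to watch is that one must enlarge $\gb^{0}$ to a generating family of its real radical rather than alter it arbitrarily, so that $\Sc(\gb)$ (and hence $f^{*}$ and $I_{min}$) is preserved while the complex equality variety is forced to coincide with the real one.
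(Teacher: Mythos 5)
Your proof follows the same route as the paper's: invoke Remark \ref{rem:consequence} to replace $\gb^{0}$ by a generating family of a radical ideal, apply the Jacobian criterion at each point of the resulting zero-dimensional variety to obtain $\Vc(\tilde{\gb}^{0})=V^{\xx}_{KKT}$ and $S=S_{KKT}=S_{FJ}$, and then read off the conclusion from Theorem \ref{cthprin}. One point where your version is actually more careful than the paper's: the paper says one may take $(\tilde{\gb}^{0})$ to be \emph{radical}, in which case $\Vc(\tilde{\gb}^{0})=\Vc(\gb^{0})$; but the hypothesis only makes the \emph{real} variety $\Vc^{\R}(\gb^{0})$ finite, and if $\Vc(\gb^{0})$ has positive-dimensional complex components without real points, the ordinary radical does not produce a zero-dimensional complex variety and the Jacobian criterion cannot give rank $n$ on those components. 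Your choice of the \emph{real radical} $\sqrt[\R]{(\gb^{0})}=\Ic(\Vc^{\R}(\gb^{0}))$, whose complex variety is exactly the finite set $\Vc^{\R}(\gb^{0})$, is what makes the Jacobian argument valid at every complex point, and is the correct reading of this step. With that adjustment your argument is the intended one, and the remaining steps (verifying $S_{min}\neq\emptyset$ since $S$ is finite, checking $S_{min}\subset\Sc(C)=S\subset V^{\xx,\R}_{KKT}$ and $\Vc(C^{0})\subset V^{\xx}_{KKT}$, and translating the three items of Theorem \ref{cthprin} into the two items of the statement) match the paper exactly.
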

This answers an open question in \cite{LauSur}. The first point was also 
solved in \cite{NieReal} using dedicated techniques.



\subsection{Smooth real variety}
We consider a set of constraints $\gb=\{g_{1}^{0}, \ldots,
g_{n_{1}}^{0} \} \subset \R[\xx]$ such that $\Vc^{\R}(\gb^{0})$ is
equidimensional smooth and $\gb^{+}=\emptyset$.  This means that $S=
\Sc (\gb)= \Vc^{\R} (\gb^{0})$ is the union of irreducible components
of the same dimension $d$ and that for any point $\xx\in S$,
$B_{\emptyset} (\xx) =[\nabla g_{1}^{0} (\xx), \ldots, \nabla
g_{n_{1}}^{0}(\xx) ]$ is of rank $m=\dim S=n-d$.  
Therefore, $S_{sing}=\emptyset$.
In this case, $\nabla f (\xx)$ is a linear combination of $\nabla g_{1}^{0} (\xx),
\ldots, \nabla g_{n_{1}}^{0} (\xx)$, if and only if, $\rank
A_{\emptyset} (\xx)\le r$. 

The set $\Gamma_{FJ}$ defined in \eqref{eq:kktproj} (or $C^{0}=
\gb^{0} \cup \Phi_{FJ}$ defined in \eqref{eq:kktprojreal}),
or the union $\Delta^{n-d}$ of $\gb^{0}$ and the set of $(n-d+1)\times (n-d+1)$ minors of
the Jacobian matrix of $\{f,g_{1}^{0}, \ldots, g_{n_{1}}^{0}\}$, which
contain the first column $\nabla f$ define the variety $S_{KKT}$.

We deduce from  Theorem \ref{cthprin}, the following result:
\begin{thm}
Let $\gb=\{g_{1}^{0}, \ldots, g_{n_{1}}^{0}\} \subset  \R[\xx]$ such that
$S =\Vc^{\R}(\gb)$ is an equidimensional and smooth variety of dimension
$d$. 

Let $C \subset \R[\xx]$ be the set of
constraints such that $C^{0} = \Gamma_{FJ}$ defined in \eqref{eq:kktproj} (or $C^{0}=
\Phi_{FJ}$ defined in \eqref{eq:kktprojreal}, $C^{0}= \Delta^{n-d}$)
Then there exists $t_{0}\in \N$ such that $\forall t\ge t_{0}$, either
$\Lc_{t} (C)=\emptyset$ and $S_{min}=\emptyset$ or
\begin{enumerate}
  \item $f^{\mu}_{t,C} = f^{*}= \min_{\xx \in S} f (\xx)$
 is reached for some $\Lambda^* \in \Lc_{t} (C)$,
  \item $\forall \Lambda^* \in \Lc_{t} (C)$ optimal for $f$, $\ker
    M_{\Lambda^*}^t$ generates $I_{min}$. 
\end{enumerate}
\end{thm}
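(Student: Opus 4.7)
The plan is to reduce the statement to a direct application of Theorem \ref{cthprin}. The content of Theorem \ref{cthprin} requires only that the set of constraints $C$ satisfies $S_{min} \subset \Sc(C) \subset V_{KKT}^{\xx,\R}$ and that $\Vc(C^0) \subset V_{KKT}^\xx$ (for the sos-equality point); moreover Proposition \ref{prop:empty} handles the infeasible case. So the work consists in verifying these two inclusions for each of the three candidate defining sets $C^0 \in \{\Gamma_{FJ},\Phi_{FJ},\Delta^{n-d}\}$, under the smoothness hypothesis. Since $\gb^+ = \emptyset$ and every point $\xx \in S$ has $\nu(\xx) = \emptyset$, the relevant matrices simplify to $B_\emptyset(\xx) = [\nabla g_1^0(\xx),\ldots,\nabla g_{n_1}^0(\xx)]$ and $A_\emptyset(\xx) = [\nabla f(\xx), \nabla g_1^0(\xx),\ldots,\nabla g_{n_1}^0(\xx)]$.

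First I would exploit the smoothness of $S$ to dispose of the singular locus. Since $S$ is equidimensional of dimension $d$ and smooth, $\rank B_\emptyset(\xx) = n-d = m$ at every $\xx \in S$. The defining equations of $V_{sing}^\xx$ enforce a rank drop of $B_\nu$ below $m + |\nu|$; for $\nu = \emptyset$ this is incompatible with smoothness on $S$, so $S_{sing} = \emptyset$. Combining with the decomposition \eqref{eq:decomp}, I get $S_{min} \subset S_{FJ} = S_{KKT} \subset V_{KKT}^{\xx,\R}$.

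Next I would check the two inclusions for each choice of $C^0$. The inclusion $S_{min} \subset \Sc(C)$ follows from $S_{min} \subset V_{KKT}^{\xx,\R} \subset V_{FJ}^{\xx,\R}$ together with Lemma \ref{lem:kktproj} (for $\Gamma_{FJ}$) and Remark \ref{rem:realkkt} (for $\Phi_{FJ}$); for $\Delta^{n-d}$ a KKT point $\xx \in S$ satisfies $\nabla f(\xx) \in \mathrm{span}_{\R}(\nabla g_i^0(\xx))$, which forces $\rank A_\emptyset(\xx) \le n-d$ and hence vanishing of all $(n-d+1)\times(n-d+1)$ minors. For the reverse inclusion $\Sc(C) \subset V_{KKT}^{\xx,\R}$, pick a real $\xx \in \Sc(C)$. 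Then $\xx \in V^\R(\gb^0) = S$, so $\rank B_\emptyset(\xx) = n-d$; the extra constraints in $C^0$ force $\rank A_\emptyset(\xx) \le n-d$ in all three cases (directly for $\Delta^{n-d}$, through the rank description of $V_{FJ}^\xx$ for $\Gamma_{FJ}$ and $\Phi_{FJ}$). Together these give $\nabla f(\xx) \in \mathrm{span}_{\R}(\nabla g_i^0(\xx))$, i.e.\ $\xx$ is a real KKT point, so $\xx \in V_{KKT}^{\xx,\R}$; the $\R$-solvability of the coefficients is the standard remark already used after the definition of $V_{KKT}^{\xx,\R}$.

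With both inclusions, Theorem \ref{cthprin} yields some $t_0$ such that for $t \ge t_0$ either $\Lc_t(C) = \emptyset$ (and then $\Sc(C) = \emptyset$, whence $S_{min} = \emptyset$ by Proposition \ref{prop:empty} applied with this $C$), or points (1) and (2) hold as stated. The main obstacle is the inclusion $\Sc(C) \subset V_{KKT}^{\xx,\R}$ for the choice $C^0 = \Delta^{n-d}$: this step is precisely where the smoothness hypothesis is essential, because vanishing of the chosen $(n-d+1)$-minors of $A_\emptyset$ forces $\nabla f$ into the span of the $\nabla g_i^0$ only after one knows that the submatrix $B_\emptyset$ already attains full rank $n-d$. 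Without smoothness the rank of $B_\emptyset$ could drop and $\Sc(C)$ would contain spurious real points that are not KKT points, breaking the exactness of the relaxation.
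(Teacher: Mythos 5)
Your proposal is correct and follows essentially the same route as the paper: the paper's own argument is the short paragraph preceding the theorem, which uses smoothness to conclude $S_{sing}=\emptyset$ (so $S_{FJ}=S_{KKT}$ and all minimizers on $S$ are KKT points), observes that each of $\Gamma_{FJ}$, $\Phi_{FJ}$, $\Delta^{n-d}$ cuts out $S_{KKT}$ over $\R^n$, and then invokes Theorem~\ref{cthprin} (with Proposition~\ref{prop:empty} implicit for the infeasible branch). You have merely spelled out the two inclusions $S_{min}\subset\Sc(C)\subset V_{KKT}^{\xx,\R}$ and correctly isolated the one place where smoothness is genuinely needed for the $\Delta^{n-d}$ case, namely that $\rank B_{\emptyset}=n-d$ at real points of $\Vc^{\R}(\gb^{0})$ is what turns vanishing of the minors through $\nabla f$ into the linear dependence $\nabla f\in\mathrm{span}(\nabla g_i^0)$.

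One small caution: when you invoke Proposition~\ref{prop:empty}, note that its stated hypothesis is the \emph{complex} inclusion $\Vc(C^{0})\subset V_{KKT}^{\xx}$, which need not hold for $C^{0}=\Delta^{n-d}$ (the minors containing $\nabla f$ may all vanish at complex points where $B_{\emptyset}$ drops rank without $\nabla f$ being in the column span). This is harmless here because what you actually need is only the implication $S_{min}=\emptyset\Rightarrow\Lc_t(C)=\emptyset$ eventually, and with $\gb^{+}=\emptyset$ one has $S_{min}=\emptyset$ iff $S_{KKT}=\emptyset$ iff $\Sc(C)=\emptyset$ (since $\Sc(C)=V_{KKT}^{\xx,\R}$ here, and $f$ takes only finitely many values on $S_{KKT}$), which by the Real Nullstellensatz gives $-1\in\Pc_{t}(C)$ for large $t$ directly. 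So the conclusion is right but it is the Nullstellensatz step inside Proposition~\ref{prop:empty}, rather than the proposition's hypotheses verbatim, that you are actually using.
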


\subsection{Known minimum}
In the case where we know the minimum $f^*$ of $f$ on the basic closed
semi-algebraic set $S$, we take $\gb'$ with
$\gb'^{0}=\{\gb^0,f-f^*\}$ and $\gb'^{+}=\gb^{+}$. Let $S=\Sc (\gb)$, $S'=\Sc
(\gb')$. By construction $S_{min}\subset S'$ and
$S'=S'_{KKT}$ and $\Vc (\gb^{0})\subset V^{\xx}_{KKT} (\gb'^{0})$. Theorem \ref{cthprin} applied to $\gb'$ implies the following result:

\begin{thm}
Let $\gb=\{g_{1}^{0}, \ldots, g_{n_{1}}^{0};g_{1}^{+}, \ldots, g_{n_{2}}^{+}\} \subset  \R[\xx]$.
Let $f^*$ be the minimum of $f$ and $C\subset \R[\xx]$ the set of constraints such that $C^{0}=\{\gb^0,f-f^*\}$ and $C^{+}=\gb^{+}$.
Then there exists $t_{0}\in \N$ such that $\forall t\ge t_{0}$,
\begin{enumerate}
 \item $f^{sos}_{t,C}=f^{\mu}_{t,C} = f^{*}= \min_{\xx \in \Sc (C)} f (\xx)$
is reached for some $\Lambda^* \in \Lc_{t,C}$,
 \item $\forall \Lambda^* \in \Lc_{t}(C)$ optimal for $f$, $\ker M_{\Lambda^*}^t$
generates $I_{min}$.
\end{enumerate}
\end{thm}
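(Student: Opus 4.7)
The plan is to apply Theorem \ref{cthprin} to the new optimization problem defined by the constraint set $C$, playing in that theorem simultaneously the role of the ``original'' constraints and of the relaxation constraints. The key observation is that adjoining the equality $f - f^{*} = 0$ turns $f$ into a constant function on the new feasible set, so that every feasible point automatically becomes both a minimizer and a KKT point of the new problem.

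First I would establish the geometric structure. Since $f^{*} = \min_{\xx \in \Sc(\gb)} f(\xx)$ is attained, $\Sc(C) = \Sc(\gb) \cap \{\xx : f(\xx) = f^{*}\}$ is precisely the (nonempty) set of minimizers of $f$ on $\Sc(\gb)$, and $f \equiv f^{*}$ on it. Next I would check that every $\xx \in \Sc(C)$ is a KKT point for the new problem: assigning the Lagrange multiplier $1$ to the new equality $f - f^{*}$ and the multiplier $0$ to all other constraints, the gradient condition collapses to $\nabla f(\xx) - \nabla(f - f^{*})(\xx) = 0$, while the equality constraints and complementary slackness hold by construction. Hence $\Sc(C) = S_{KKT}(C) = S_{min}(C)$, which supplies the inclusion $S_{min}(C) \subset \Sc(C) \subset V^{\xx,\R}_{KKT}(C)$ required by Theorem \ref{cthprin}. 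The very same multiplier choice, made pointwise on $\C^{n}$ for any $\xx \in \Vc(C^{0})$, shows that $\Vc(C^{0}) \subset V^{\xx}_{KKT}(C)$, which is the additional hypothesis needed for the sum-of-squares equality in that theorem.

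Applying Theorem \ref{cthprin} to the problem with constraints $C$ then yields an integer $t_{0}$ such that, for every $t \geq t_{0}$, $f^{sos}_{t,C} = f^{\mu}_{t,C} = f^{*}$ is reached at some $\Lambda^{*} \in \Lc_{t}(C)$, and $(\ker M^{t}_{\Lambda^{*}}) = I_{min}(C)$ for every optimal $\Lambda^{*}$. The final identification $I_{min}(C) = I_{min}$ is immediate, since $S_{min}(C) = \Sc(C)$ coincides with the full set of minimizers of $f$ on $\Sc(\gb)$, whose vanishing ideal is the minimizer ideal appearing in the statement.

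The main point requiring care --- more a matter of bookkeeping than a genuine obstacle --- is the careful swap of roles between the ``original'' and relaxation constraint sets in Theorem \ref{cthprin}: all the KKT, inclusion, and vanishing hypotheses must be verified relative to the \emph{new} problem defined by $C$, and then the resulting $I_{min}(C)$ must be reconciled with the minimizer ideal of the original formulation.
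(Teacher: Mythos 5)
Your proof is correct and follows essentially the same route as the paper: form the augmented constraint set $C=\gb'$ with $C^{0}=\{\gb^{0},f-f^{*}\}$, observe that $S'=\Sc(C)$ is the minimizer set and that every point of $S'$ (indeed every point of $\Vc(C^{0})$) is a KKT point via the multiplier $1$ on $f-f^{*}$ and $0$ elsewhere, and then invoke Theorem~\ref{cthprin} applied to $\gb'$. Your explicit verification that $\Vc(C^{0})\subset V^{\xx}_{KKT}(C)$ is a welcome clarification of the paper's rather terse ``By construction $\ldots$ $\Vc(\gb^{0})\subset V^{\xx}_{KKT}(\gb'^{0})$'', which should read $\Vc(\gb'^{0})$.
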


\subsection{$\gb^{+}$-radical computation.}
In the case where $f=0$, by Remark \ref{rem:f=0} all the points of $S$
are KKT points and minimizers of $f$ so that $S_{min} = S = S_{KKT}$.
Moreover, $I_{KKT}^{\xx} = (g_{1}^{0}, \ldots, g_{n_{1}}^{0})$ since
$F_{1}, \ldots, F_{n}$, $v_{1} g_{1}^{+}, \ldots, v_{n_{2}} g_{n_2}^{+}$ are
homogeneous of degree $1$ in the variables $u_{1}, \ldots, u_{n_{1}},
v_{1}, \ldots, v_{n_{2}}$.
We deduce the following result:
\begin{thm}
Let $\gb=\{g_{1}^{0}, \ldots, g_{n_{1}}^{0};g_{1}^{+}, \ldots, g_{n_{2}}^{+}\} \subset  \R[\xx]$.
There exists $t_{2}\in \N$ such that $\forall t \geq t_{2}$, $\forall \Lambda^* \in \Lc_{t}(\gb)$ optimal for $0$, we have $(\ker M_{\Lambda^*}^t) = \Ic(\Sc)=\sqrt[\gb^{+}]{(\gb^{0})}$.
\end{thm}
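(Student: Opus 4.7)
My plan is to derive this theorem as a direct specialization of Theorem \ref{cthprin} to the case $f=0$, with the constraint set $C=\gb$ itself.

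First, I would identify the ingredients required to invoke Theorem \ref{cthprin}: a set of constraints $C$ satisfying $S_{min}\subset \Sc(C)\subset V^{\xx,\R}_{KKT}$. With $f=0$, Remark \ref{rem:f=0} already tells us that every point of $S$ is a KKT minimizer, so $S_{min}=S_{KKT}=S=\Sc(\gb)$ and $I_{min}=\Ic(S)=\sqrt[\gb^{+}]{(\gb^{0})}$ by the Positivstellensatz (Theorem \ref{null}(iii)). In particular, taking $C=\gb$, the chain $S_{min}=\Sc(\gb)\subset V^{\xx,\R}_{KKT}$ is immediate: any $\xx\in\Sc(\gb)$ satisfies $g_{i}^{0}(\xx)=0$, $g_{j}^{+}(\xx)\ge 0$, and the KKT system is trivially solved by choosing all multipliers $u_i,v_j$ equal to zero (since $\nabla f\equiv 0$), so $\xx$ is the projection of a real point of $V_{KKT}$.

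Next, I would verify the minimality condition $\Lambda^{*}(f)=f^{*}$ needed in Theorem \ref{cthprin}: this is automatic since $f=0$ forces $f^{*}=0$ and $\Lambda^{*}(0)=0$ for every linear form, so every $\Lambda^{*}\in\Lc_{t}(\gb)$ satisfies $\Lambda^{*}(f)=f^{\mu}_{t,\gb}=0$, and being optimal for $f=0$ in the sense of Proposition \ref{generic} simply means having maximal rank moment matrix among all $\Lambda\in\Lc_{t}(\gb)$. Applying Theorem \ref{cthprin} then yields $t_{2}\in\N$ such that for all $t\ge t_{2}$ and every such optimal $\Lambda^{*}$, the ideal generated by $\ker M_{\Lambda^{*}}^{t}$ coincides with $I_{min}=\sqrt[\gb^{+}]{(\gb^{0})}=\Ic(\Sc(\gb))$.

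There is essentially no obstacle here; the only point worth writing out carefully is the verification $\Sc(\gb)\subset V^{\xx,\R}_{KKT}$, which rests on the observation that the vanishing of $\nabla f$ when $f\equiv 0$ trivializes the KKT equations. Everything else is a bookkeeping reduction to the main theorem, together with the identification of $I_{min}$ with the $\gb^{+}$-radical provided by Remark \ref{rem:f=0} and the Positivstellensatz.
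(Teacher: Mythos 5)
Your proof is correct and follows essentially the same route as the paper: specialize Theorem \ref{cthprin} with $C=\gb$ and $f=0$, using Remark \ref{rem:f=0} and the Positivstellensatz to identify $I_{min}=\Ic(\Sc(\gb))=\sqrt[\gb^{+}]{(\gb^{0})}$. The only cosmetic difference is that you verify the inclusion $\Sc(\gb)\subset V^{\xx,\R}_{KKT}$ by directly lifting each $\xx\in\Sc(\gb)$ to the KKT variety with zero multipliers, whereas the paper reads it off from the identity $I_{KKT}^{\xx}=(\gb^{0})$ stated in Remark \ref{rem:f=0}; both verifications are sound and equally short.
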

 
This gives a way to compute $\sqrt[C^{+}]{(C^{0})}$ (see also \cite{Zhi}), which generalizes the
approach of \cite{LLR08b}, \cite{lasserre:hal-00651759} or \cite{Ros2009} to compute the real radical of an ideal.

\section{Examples}
This section contains examples that illustrate different aspects of
our method.
In the case of a finite number of minimizers for a function $f$ on the
semi-algebraic set $S$ defined by the set of constraints $\gb$, 
the approach we describe leads to the following algorithm:
\begin{enumerate}
 \item Compute $C\subset \R[\xx]$ such that $C^{0}$ generates $I_{KKT}\cap \R[\xx]$
   and $C^{+}=\gb^{+}$;
 \item $t:=\lceil\frac{1}{2}\max\{\deg (f), \deg (g^{0}_{i}), \deg (g^{+}_{j})\}\rceil$;
\item Compute $\Lambda^{*}\in \Lc_{t}(C)$ optimal for $f$ (solving a finite dimensional SDP problem by an interior point method);
 \item Check the convergence certificate for $M^{t}_{\Lambda^{*}}$
   (by flat extension \cite{HeLa05, MoLa2008});
 \item If it is not satisfied, then $t:=t+1$ and repeat from step (2);
 \item[]Otherwise compute $ K := \ker M^{t}_{\Lambda^{*}}$.
\end{enumerate}
Output $f^{*} = \Lambda^{*} (f)$ and the generators $K$ of $I_{min}$.

%

%
%

\begin{example} We consider the ``ill-posed'' problem 
$$ \min \ x\ s.t \ x^{3} \ge 0. $$
The ideal $I_{KKT}$ is $I_{KKT}= (1-3v_{1} x^{2}, v_{1} x^{3})=
(1)$. Thus $V_{KKT}=\emptyset$. According to the decomposition
\eqref{eq:decomp}, $S_{FJ}=S_{sing}$ and we compute the minimum of $x$
on $S_{sing}$, which is defined by $x^{2}=0$:
$$ \min \ x\ s.t \ x^{2}=0. $$
Now according to section \ref{sec:zerodim}, the relaxation associated
to this problem is exact and yields the solution $x=0$.
\end{example}

\begin{example}
We consider the following problem
$$\begin{array}{rl}
\min & f(x,y,z)=x^{2}+y^{2}+z^{2};\\
s.t & \rank \left ( \begin{array}{ccc}
x+z+1 & x+y & y+z \\
  x+y & y+z & x+z+1 
\end{array}
 \right) \leq 1
\end{array}
$$ 
or equivalently
$$\begin{array}{rl}
\min & f(x,y,z)=x^{2}+y^{2}+z^{2};\\
s.t & (x+z+1)(y+z)-(x+y)^2=0;\\
&(x+z+1)^2-(y+z)(x+y)=0;\\
&(x+z+1)(x+y)-(y+z)^2=0;\\
\end{array}
$$ 
This corresponds to computing the closest point on a twisted cubic defined
by $2\times 2$ minors.
The set of constraints $\gb$ is not regular but
$\Sc(\gb)=\Vc^{\R}(\gb^0)$ is a smooth real variety.

In the first iteration of the algorithm, the order is 1, the size of the Hankel matrix
$M_{\Lambda}^1$ is 3, $\min \Lambda(f) =1$ and there is no duality gap.
The flat extension condition is satisfied for $M_{\Lambda}^1$ and thus
we have found the minimum.
The algorithm stops and we obtain  $I_{min}=(x,y-1,z)$.
The points that minimize f are $\{(x=0,y=1,z=0)\}$.
\end{example}
 
\begin{example}
We consider the Motzkin polynomial,
\begin{equation*}
 \min \ f(x,y)=1+x^4y^2+x^2y^4-3x^2y^2
\end{equation*}
which is non negative on $\R^2$ but not a sum of squares in $\R[x,y]$.
 We compute its gradient ideal,
 $I_{grad}(f)=(-6xy^2+2xy^4+4x^3y^2,-6yx^2+2yx^4+4y^3x^2)$, which is
 not zero-dimensional.
 
 In the first iteration of the algorithm, the order is 3, the size of the Hankel
 matrix $M_{\Lambda}^3$ is 10, $\min \Lambda(f) =-216$. The flat extension condition is not satisfied hence we try with degree 4.
 
  In the second iteration the order is 4, the size of the Hankel
 matrix $M_{\Lambda}^4$ is 15, $\min \Lambda(f) =0$, there is no
 duality gap. The flat extension condition is satisfied for $M_{\Lambda}^4$ and we have found the minimum.
 The algorithm stops and we obtain $I_{min}=(x^2-1,y^2-1)$. 
 The points that minimize f are $\{(x=1,y=1),(x=1,y=-1),(x=-1,y=1),(x=-1,y=-1)\}$.
 
 For this example Gloptipoly must go until order 9 in order to satisfy the flat extension condition.
    

\end{example}    
    
\begin{example}
We consider the Robinson polynomial
 $$\min \ f(x,y)=1+x^6-x^4-x^2+y^6-y^4-y^2-x^4y^2-x^2y^4+3x^2y^2$$
which is non negative on $\R^2$ but not a sum of squares in $\R[x,y]$.
 We compute its gradient ideal, \\
$I_{grad}(f)=(6x^5-4x^3-2x-4x^3y^2-2xy^4+6xy^2,6y^5-4y^3-2y-4y^3x^2-2yx^4+6yx^2)$
which is not zero-dimensional.

In the first iteration, the order is 3, the size of the Hankel matrix $M_{\Lambda}^3$ is 10, $\min \Lambda(f) =-0.93$. 
The flat extension condition is not satisfied hence we try with degree 4.

In the second iteration the degree is 4, the size of the Hankel matrix $M_{\Lambda}^4$ is 15, $\min \Lambda(f) =0$.
There is no duality gap. The flat extension condition is satisfied for $M_{\Lambda}^3$
and we have found the minimum.

The algorithm stops and we obtain  $I_{min}=(x^3-x,y^3-y,x^2y^2-x^2-y^2+1)$.
The points that minimize $f$ are $\{(x=1,y=1),(x=1,y=-1),(x=-1,y=1), (x=-1,y=-1),
(x=1,y=0),(x=-1,y=0),(x=0,y=1),(x=0,y=-1)\}$.
    
For this example, Gloptipoly must go until order 7 in order to satisfy
the flat extension condition. 
\end{example}
 
\begin{example}
We consider the homogeneous Motzkin polynomial with a perturbation $\epsilon = 0.005$,
$$\begin{array}{rl}
\min & f(x,y,z)=x^4y^2+x^2y^4-3x^2y^2z^2+z^6+\epsilon(x^2+y^2+z^2);\\
s.t & h(x,y,z) = 1-x^2-y^2-z^2  \geq 0 
\end{array}
$$
This example coming from \cite[Example 6.25]{LauSur} is a case where the constraints $\gb$ define a compact semi-algebraic set, but the
direct relaxation using the associated quadratic module or preordering is not exact.

We add the projection of the KKT ideal and we have the similar problem
$$\begin{array}{rl}
\min & x^4y^2+x^2y^4-3x^2y^2z^2+z^6+0.005(x^2+y^2+z^2);\\
s.t & -4zx^4y-20zx^2y^3+12x^2yz^3-0.06zy^5+12.06yz^5=0;\\
&-20zx^3y^2-4zxy^4+12xy^2z^3-0.06zx^5+12.06xz^5=0;\\
&(4x^3y^2+2xy^4-6xy^2z^2+0.03x^5)(-x^2-y^2-z^2+1)=0;\\
&(2x^4y+4x^2y^3-6x^2yz^2+0.03y^5)(-x^2-y^2-z^2+1)=0;\\
&(-6x^2y^2z+6.03z^5)(-x^2-y^2-z^2+1)=0;
\end{array}
$$
where the first three equations are the $2\times 2$ minors of the
Jacobian matrix of $f$ and $h$
and the last three equations are the gradient ideal of $f$
multiplied by $h$.\\

In the first iteration the order is 5, the size of the Hankel matrix
$M_{\Lambda}^5$ is 167, $\min \Lambda(f) =0$, there is no duality gap.
The flat extension condition is satisfied for $M_{\Lambda}^5$ and we have found the minimum.
The algorithm stops and we obtain  $I_{min}=(x,y,z)$.
The point that minimize $f$ is $(0,0,0)$.

For this example, the flat extension condition does not hold with Gloptipoly if $\epsilon \le 0.01$. 
\end{example}  

Finally with these two last examples we show that even the minimizer
ideal $I_{min}$ is not zero-dimensional we can recover it from a solution of the relaxation problem.

\begin{example}
We consider Motzkin polynomial over the unit ball:
$$ 
\begin{array}{rl}
\min & f(x,y,z)=x^4y^2+x^2y^4-3x^2y^2z^2+z^6;\\
s.t & h (x,y,z)= 1-x^2-y^2-z^2  \geq 0 
\end{array}
$$
The polynomial $f$ is homogeneous and non negative on $\R^3$ but not a
sum of squares in $\R[x,y,z]$.\\

We add the projections of KKT ideal and we have the similar problem
$$\begin{array}{rl}
\min & x^4y^2+x^2y^4-3x^2y^2z^2+z^6;\\
s.t & -4xy^5+12xy^3z^2+4yx^5-12x^3yz^2=0;\\
&-4zx^4y-20zx^2y^3+12x^2yz^3+12yz^5=0;\\
&-20zx^3y^2-4zxy^4+12xy^2z^3+12xz^5=0;\\
&(4x^3y^2+2xy^4-6xy^2z^2)(-x^2-y^2-z^2+1)=0;\\
&(2x^4y+4x^2y^3-6x^2yz^2)(-x^2-y^2-z^2+1)=0;\\
&(-6x^2y^2z+6z^5)(-x^2-y^2-z^2+1)=0;
\end{array}
$$
where the first three equations are the $2\times 2$ minors of the
Jacobian matrix of $f$ and $h$ 
and the last three equations are the gradient ideal of $f$ multiplied by $h$.\\

In the first iteration the order is 5, the size of the Hankel matrix
$M_{\Lambda}^5$ is 156, $\min \Lambda(f) =0$, there is no duality gap.
We compute the kernel of this matrix:
$\ker M_{\Lambda}^5=\< z(y^2-z^2),x(y^2-z^2),z(x^2-z^2),y(x^2-z^2)
\>$. It generates the minimizer ideal
$I_{min}=(z(y^2-z^2),x(y^2-z^2),z(x^2-z^2),y(x^2-z^2))$ defining $6$
lines: $(y\pm z, x\pm z), (x,z), (y,z)$.
Here $\Vc(I_{min})$ is not included in $S$.
\end{example}

\begin{example}
We consider minimization of a linear function on a torus:
$$ \begin{array}{rl}
\min & f(x,y,z)= z\\
s.t & 9-10x^2-10y^2+6z^2+x^4+2x^2y^2+2x^2z^2+2y^2z^2+y^4+z^4=0
\end{array}$$
In the first iteration, the order is 2, the size of the Hankel matrix $M_{\Lambda}^2$ is $10$, $\min \Lambda(f) =-1$, there is no duality gap.
We compute the kernel of this matrix: $\ker
M_{\Lambda}^2=\< x^2+y^2-4,x(z+1),y(z+1), z (z+1), (z+1) \>$ which generates
the minimizer ideal $I_{min}=(x^2+y^2-4, z+1)$,
defining a circle which is the intersection of the torus with a tangent plane.
Notice that the multiplicity of this intersection has been removed in $I_{min}$.
\end{example}


\bibliographystyle{plain}

\end{document}